\theoremstyle{plain}
\newtheorem{theorem}{Theorem}[section]
\theoremstyle{plain}
\newtheorem{proposition}[theorem]{Proposition}
\theoremstyle{plain}
\newtheorem{lemma}[theorem]{Lemma}
\theoremstyle{plain}
\newtheorem{corollary}[theorem]{Corollary}
\theoremstyle{plain}
\theoremstyle{plain}
\newtheorem{question}[theorem]{Question}
\theoremstyle{plain}
\newtheorem{mthm}{Theorem}
\theoremstyle{plain}
\theoremstyle{definition}
\newtheorem{definition}[theorem]{Definition}
\theoremstyle{remark}
\newtheorem{remark}[theorem]{Remark}
\theoremstyle{remark}
\theoremstyle{remark}
\title[Amenability versus non-exactness]
{Amenability versus non-exactness of dense subgroups of a compact group.}
\author{Masato Mimura}
\thanks{The author is supported in part by JSPS KAKENHI Grant Number JP17H04822.}
\address{Masato Mimura\\
Mathematical Institute, Tohoku University, Japan\ /\ \'{E}cole Polytechnique F\'{e}d\'{e}rale de Lausanne, Switzerland}
\email{mimura-mas@m.tohoku.ac.jp}
\subjclass[2010]{Primary 20F69; Secondary 20D06}
\date{\today}
\begin{document}
\maketitle

\begin{abstract}
Given a countable residually finite group, we construct a compact group $K$ and two elements $w$ and $u$ of $K$ with the following properties: The group generated by $w$ and $u^3$ is amenable,  the group generated by $w$ and $u$ contains a copy of the given group, and these two groups are dense in $K$. By combining it with a construction of non-exact groups that are LEF by Osajda and Arzhantseva--Osajda and formation of diagonal products, we construct an example for which the latter dense group is non-exact. Our proof employs approximations  in the space of marked groups of LEF (``Locally Embeddable into Finite groups'') groups.
\end{abstract}




\section{Introduction}
In the present paper, we provide a way to construct \textit{RF} (Residually Finite) groups with interesting properties. More precisely, we  prove certain embedding theorems into RF groups. Our main point is that we extend our framework from RF groups to \textit{LEF} (Locally Embeddable into Finite groups) groups, which are closely related to the \textit{space of marked groups}. See Section~\ref{section=Organization} for more details of our method and organization of the current paper. Here we briefly recall concepts of RF and LEF groups, which are equivalent to the standard definitions for finitely generated groups. See Subsection~\ref{subsection=TheSpaceOfMarkedGroups} for details of terminologies appearing in the definition below.

\begin{definition}\label{definition=LEF}
Let $G$ be a finitely generated group.
\begin{enumerate}[$(1)$]
  \item (\cite{BookMalcev}, \cite{Stepin}, \cite{VershikGordon}) The group $G$ is said to be \textit{LEF} if for some (equivalently every) marking $S$ of $G$, there exists a sequence of finite marked groups such that it converges to $(G;S)$ in the space of marked groups.
  \item The group $G$ is said to be \textit{RF} if moreover, we can take a convergence sequence as in $(1)$ such that it consists of marked group quotients of $(G;S)$.
\end{enumerate}
\end{definition}

One of the  motivations to construct RF groups with specified properties comes from the work \cite{MimuraSakoPartI} and \cite{MimuraSakoPartII} of Sako and the author. There we construct two metric spaces with contrasting coarse geometric properties out of a common sequence of finite groups; this construction relates to the \textit{box space construction} of metric spaces from a RF group. Our main theorem, Theorem~\ref{mtheorem=MainTheorem}, in particular serves as a source to address the following question. 
\begin{question}\label{question=LubotzkyWeiss}
For a compact $($Hausdorff$)$ infinite group $K$ and two finitely generated dense subgroups $\Lambda_1$ and $\Lambda_2$ in $K$, how different can the group properties of $\Lambda_1$ and $\Lambda_2$ be?
\end{question}
Question~\ref{question=LubotzkyWeiss} is related to RF groups. Indeed, a result of Mal'cev implies that $\Lambda_1$ and $\Lambda_2$ in Question~\ref{question=LubotzkyWeiss} must be RF: By the Peter--Weyl theorem, every compact group is residually linear.

This question is inspired by a conjecture of Lubotzky and Weiss \cite[Conjecture~5.4]{LubotzkyWeiss}, which predicted that in the setting of Question~\ref{question=LubotzkyWeiss}, it would be impossible that $\Lambda_1$ is \textit{amenable} (see \cite[Chapter~3]{bookNowakYu}) and that $\Lambda_2$ has \textit{Kazhdan's property $(\mathrm{T})$} (see \cite[6.4]{bookNowakYu}). This conjecture was resolved in the negative by Ershov and Jaikin-Zapirain \cite[Subsection~6.3]{ErshovJaikinZapirain}  for $K$ of the form $K=\prod_{n\in \mathbb{N}_{\geq 1}}\mathrm{SL}(3n,\mathbb{F}_p)$ 
 for a fixed prime $p$, where $\mathbb{F}_q$ denotes the finite field of order $q$ for a prime power $q$.  Kassabov provided a different example using a similar idea; see \cite[Subsection~6.3]{ErshovJaikinZapirain}. 

Throughout this paper, we always equip an infinite direct product $K$ of finite groups with the product topology and regard $K$ as a compact group. We use the terminology ``generation'' for algebraic generation (as a group), even inside such a $K$. For $n\in \mathbb{N}_{\geq 1}$, a group $G$ is said to be \textit{$n$-generated} if there exist $g_1,\ldots,g_n\in G$ that generate $G$.

Our main theorem, Theorem~\ref{mtheorem=MainTheorem}, provides an answer to Question~\ref{question=LubotzkyWeiss} by proving that they can be considerably different. Moreover, we may construct such examples with the \textit{minimal numbers} of generators: $\Lambda_1$ and $\Lambda_2$ are both $2$-generated; furthermore, one generator of $\Lambda_1$ is the \textit{cube} of one generator of $\lambda_2$, and the other generator of $\Lambda_1$ \textit{equals} that of $\Lambda_2$. To state Theorem~\ref{mtheorem=MainTheorem}, we use the following terminology.

\begin{definition}\label{definition=extension}
Let 
\[
1 \ \longrightarrow \ N \ \longrightarrow \ \tilde{G} \  \longrightarrow \ G \ \longrightarrow \ 1,
\]
be a  short exact sequence of countable discrete groups. Assume that $N$ satisfies a group property $\mathcal{N}$. Then we say that $\tilde{G}$ is an $\mathcal{N}$-\textit{lift} of $G$.
\end{definition}
We use the terminology \textit{lift} rather than extension because there is ambiguity of expressing extensions (``$G$-by-$N$'' or ``$N$-by-$G$''). The property $\mathcal{N}$ of our concern is being (countable) \textit{locally finite}, that means, for every non-empty subset $F\subseteq N$, the group $\langle F\rangle$ generated by $F$ is finite. Locally finite groups are amenable and have \textit{asymptotic dimension} $0$ (see \cite[2.2]{bookNowakYu}).

For two countable discrete groups $G$ and $H$, we define the \textit{standard $($restricted$)$ wreath product} $G\wr H$ by $(\bigoplus_{H}G)\rtimes H$, where $H$ acts on $\bigoplus_{H}G$ by permutations of indices by right multiplications. For $n\in \mathbb{N}_{\geq 1}$, denote by $[n]$ the set $\{1,2,\ldots ,n\}$. For a finite set $B$, define $\mathrm{Alt}(B)$ as the \textit{alternating group} over $B$. 
\begin{mthm}[Main theorem: Embedding theorem in the context of finitely generated dense subgroups in a compact group]\label{mtheorem=MainTheorem}
Let $G$ be a finitely generated LEF group. 
Then, there exist
\begin{itemize}
  \item a sequence $(l_m)_{m\in \mathbb{N}}$ of strictly increasing natural numbers at least $5$, 
  \item a sequence $(p_m)_{m\in \mathbb{N}}$ of mutually distinct primes, and 
  \item three elements $w$, $t$ and $u$ in 
\[
K=\prod_{m\in \mathbb{N}} (\mathrm{Alt}([l_m])\wr (\mathbb{Z}/p_m\mathbb{Z}))
\]
\end{itemize}
such that the following three assertions are all satisfied:
\begin{enumerate}[$(1)$]
  \item The group $\Lambda_1=\langle w,t\rangle$ is a locally-finite-lift of $\mathbb{Z}$;
  \item the group $\Lambda_2=\langle w,u\rangle$  contains an isomorphic copy of  a locally-finite-lift of $G$;
  \item these two subgroups $\Lambda_1$ and $\Lambda_2$ are both dense in $K$.
\end{enumerate}
Moreover, we may take $t=u^3$.

Further, for a given finitely generated RF group $H$, we may arrange $(l_m)_{m}$, $(p_m)_m$, $w$, $t$, and $u$  such that $\Lambda_2$ above contains an isomorphic copy of $H$.
\end{mthm}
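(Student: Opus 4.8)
The plan is to realize $K$ as a limit, in the space of marked groups, of a carefully chosen sequence of finite groups $G_m = \mathrm{Alt}([l_m]) \wr (\mathbb{Z}/p_m\mathbb{Z})$, and to choose the triple $(w,t,u)$ so that its projection to each factor $G_m$ generates that factor (this gives density of both $\Lambda_1$ and $\Lambda_2$ in $K$), while the global structure of $\Lambda_1 = \langle w, u^3\rangle$ and $\Lambda_2 = \langle w, u\rangle$ is controlled by approximating two target marked groups: a locally-finite-lift of $\mathbb{Z}$ for $\Lambda_1$, and a locally-finite-lift of $G$ (into which a locally-finite-lift of $H$ embeds, when $H$ is the RF group in the final clause) for $\Lambda_2$. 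The key idea is the one advertised in the abstract: since $G$ is LEF, it is a limit of finite marked groups $(G; S)$, so the approximating finite groups for $\Lambda_2$ can be built to simultaneously (i) mimic $G$ on large balls and (ii) carry the wreath-product structure needed for density. The role of the cube is that in $\mathbb{Z}/p_m\mathbb{Z}$ with $p_m$ an odd prime distinct from $3$, the element $3 \in \mathbb{Z}/p_m\mathbb{Z}$ is still a generator, so $u^3$ and $u$ have the same projection to the ``base rotation'' coordinate but can be arranged to generate wildly different subgroups of the $\mathrm{Alt}$-part via a twisted diagonal embedding.

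First I would set up the finite building blocks. For each stage, I would pick $l_m \geq 5$ and an odd prime $p_m \notin \{3\}$, all $p_m$ distinct, and inside $G_m = \mathrm{Alt}([l_m]) \wr \mathbb{Z}/p_m\mathbb{Z}$ choose $\bar w_m$ of the form (alternating permutation in the base, trivial in the $\mathbb{Z}/p_m\mathbb{Z}$ factor) and $\bar u_m = (a_m; 1)$ where $1 \in \mathbb{Z}/p_m\mathbb{Z}$ is the standard generator and $a_m \in \bigoplus_{\mathbb{Z}/p_m\mathbb{Z}} \mathrm{Alt}([l_m])$ is a tuple chosen so that: conjugating $\bar w_m$ by powers of $\bar u_m$ and taking products generates all of $\bigoplus \mathrm{Alt}([l_m])$ (using that $\mathrm{Alt}([l_m])$ is simple and $2$-generated for $l_m \geq 5$, plus a standard ``independent generators in distinct coordinates'' argument), hence $\langle \bar w_m, \bar u_m\rangle = G_m$; and likewise $\langle \bar w_m, \bar u_m^3\rangle = G_m$ because $3$ is invertible mod $p_m$. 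This secures density in each coordinate, hence density of $\Lambda_1$ and $\Lambda_2$ in $K = \prod_m G_m$. The real work is to do this \emph{compatibly} with the marked-group approximation: I would interleave the coordinates of $K$ so that on the ``early'' coordinates the marked group $(\langle w, u\rangle; \{w,u\})$ agrees with the finite quotient approximations of $(G; S)$ coming from LEF-ness (this forces $\Lambda_2$ to contain a copy of a locally-finite-lift of $G$: the kernel of the map onto the limit $G$ is a directed union of finite pieces, hence locally finite), while the ``base rotation'' coordinate forces $\Lambda_1 = \langle w, u^3 \rangle$ to surject onto $\mathbb{Z}$ with locally finite (in fact, locally-finite-by-the-product-of-$\mathrm{Alt}$'s) kernel.

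Next I would handle the two targets separately but in parallel. For $\Lambda_1$: the map $K \to \prod_m \mathbb{Z}/p_m\mathbb{Z}$ sends $u$ to $(1,1,1,\dots)$ (a topological generator of the procyclic group $\prod_m \mathbb{Z}/p_m\mathbb{Z}$) and $w$ to $0$; one checks $\langle w, u^3\rangle \cap \big(\prod_m \mathrm{Alt}([l_m])^{p_m}\big)$ is locally finite because any finitely generated subgroup involves only finitely many coordinates of a restricted direct sum of finite groups — but here one must be careful, since $w$ lives in a \emph{full} product; the point (and this is where the construction must be rigged) is to choose the $a_m$ and $\bar w_m$ so that $\langle w, u\rangle$ actually lands in a subgroup of $K$ on which the projection to the $\mathrm{Alt}$-parts has image in the \emph{restricted} sum direction in the relevant sense — concretely, so that $w$ and its $u$-conjugates have support going to infinity, which is exactly what a diagonal-product construction à la Osajda and Arzhantseva--Osajda delivers. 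For $\Lambda_2$: invoking the LEF approximation of $G$ and the cited constructions of non-exact LEF groups, combined with the diagonal-product formation referenced in the abstract, gives that $\Lambda_2$ contains a copy of a locally-finite-lift of $G$; for the final clause, when we are handed a finitely generated RF group $H$, we use RF-ness to get honest finite \emph{quotients} (not just local embeddings), which lets us route $H$ through the $\mathrm{Alt}$-wreath blocks directly — every finite quotient of $H$ embeds in some $\mathrm{Alt}([l])$ via the Cayley/regular action made alternating by padding, and we thread these through successive coordinates of $K$ so that $H \hookrightarrow \Lambda_2$ and the extension datum over $H$ is a directed union of finite groups, i.e. locally finite.

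The main obstacle I anticipate is the \emph{simultaneous} control: making one coordinate family (the $\mathbb{Z}/p_m\mathbb{Z}$ rotations) drive $\Lambda_1 \twoheadrightarrow \mathbb{Z}$ with locally finite kernel, another coordinate family encode the LEF approximation of $G$ (resp. the finite quotients of $H$) so that $\Lambda_2$ swallows a locally-finite-lift of $G$ (resp. of $H$), and \emph{all} coordinates together force density of both $\Lambda_1$ and $\Lambda_2$ in the \emph{full} product $K$ — all while keeping $t = u^3$ literally, not just up to finite index. Density in the full product is delicate precisely because $w$ has unbounded (indeed, eventually full) support, so one cannot naively argue coordinate-by-coordinate with a restricted-sum argument for local finiteness; the resolution is the diagonal-product bookkeeping, where at stage $m$ one only ``switches on'' genuinely new behaviour in coordinate $m$ while the finitely-generated-subgroup-sees-only-finitely-many-coordinates phenomenon is recovered on the \emph{kernel} rather than on all of $\Lambda_i$. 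Once this bookkeeping is pinned down, properties (1), (2), (3) and $t = u^3$ follow, and the RF refinement is a matter of replacing ``local embedding'' by ``quotient'' throughout the $\Lambda_2$-side of the construction.
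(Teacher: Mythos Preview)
Your proposal has the right high-level shape (diagonal products inside $K=\prod_m L_m$, density via simplicity of composition factors, LEF approximation feeding into $\Lambda_2$), but the core mechanism that distinguishes $\Lambda_1=\langle w,u^3\rangle$ from $\Lambda_2=\langle w,u\rangle$ is missing, and the substitute you offer would not work.

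You write that ``the role of the cube is that $3$ is still a generator of $\mathbb{Z}/p_m\mathbb{Z}$, so $u^3$ and $u$ have the same projection to the base rotation coordinate but can be arranged to generate wildly different subgroups of the $\mathrm{Alt}$-part via a twisted diagonal embedding.'' This is the crux, and it is left as a black box. The paper's mechanism is concrete and quite different: one takes $u^{(m)}=(\mathbf{e},c)$ a pure shift and $w^{(m)}=(f^{(m)},0)$ with $f^{(m)}$ supported on a carefully chosen finite set $\Sigma\subset\mathbb{Z}/p_m\mathbb{Z}$ (e.g.\ $\Sigma=\{2,4,\ldots,2^{23},3\cdot 2^{23}\}$ with $c=2^{22}$). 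The set $\Sigma$ is rigged so that translates $\Sigma+3cn$ are pairwise disjoint for all $|n|$ up to any prescribed radius (so conjugates of $w^{(m)}$ by powers of $(u^{(m)})^3$ commute locally: the \emph{absorption trick}), while certain translates $\Sigma+cn$ do overlap in exactly one coordinate, producing a single nontrivial commutator that, via a Hall-type calculation, recovers all the generators of the encoded LEF approximation. Passing to Cayley limits, $(L_m;w^{(m)},(u^{(m)})^3)$ converges to $C\wr\mathbb{Z}$ with $C$ finite cyclic, whereas $(L_m;w^{(m)},u^{(m)})$ converges to a group containing a copy of (a lift of) $G$. Without this support-placement idea there is no reason $\langle w,u^3\rangle$ should be any tamer than $\langle w,u\rangle$; coprimality of $3$ and $p_m$ only tells you they coincide coordinatewise, which is a density statement, not a structural one.

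Your argument for why $\Lambda_1$ is a locally-finite-lift of $\mathbb{Z}$ is also off. Projecting to $\prod_m \mathbb{Z}/p_m\mathbb{Z}$ and hoping the kernel is locally finite runs into exactly the problem you flag: $w$ lives in the full product, and there is no restricted-sum miracle to invoke. The paper does not argue this way at all. Instead it uses that the diagonal product $\Lambda_1=\Delta_m(L_m;w^{(m)},(u^{(m)})^3)$ is automatically an LFNF-lift (hence locally-finite-lift) of the \emph{Cayley limit} of the sequence, and that limit is $C\wr\mathbb{Z}$ by the absorption trick; since $C$ is finite, $\Lambda_1$ is then a locally-finite-lift of $\mathbb{Z}$. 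Similarly, there are intermediate steps you have not accounted for: embedding $G$ into a LEF group generated by involutions, encoding the LEF approximation into $\mathrm{Alt}(G_m)$ via Cayley-type permutations, and using Ore's theorem to write each generator as a single commutator so that the Hall argument actually reconstructs them inside the $2$-generated group.
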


\begin{remark}\label{remark=LFNF-lifts1}
In fact, the group $\Lambda_1$ which appears in Theorem~\ref{mtheorem=MainTheorem} may be described in more detail in terms of locally-\textit{fully}-normally-finite-lifts, abbreviated as \textit{LFNF-lifts}; then it relates to certain group properties in terms of random walks, for instance, the Liouville property and Shalom's property $H_{\mathrm{FD}}$ \cite{ShalomHFD}. See Definition~\ref{definition=LFNF-lifts} for the definition of LFNF-lifts, and see Remark~\ref{remark=LFNF-lifts2} for more precise statements on $\Lambda_1$.
\end{remark}

\begin{remark}\label{remark=lifts}
Through diagonal products (Lemma~\ref{lemma=DiagonalProducts}), the statement of Theorem~\ref{mtheorem=MainTheorem} for the LEF group $G$ follows from that for the RF group $H$. Nevertheless, we state Theorem~\ref{mtheorem=MainTheorem} as in the form above because we first prove the assertion for $G$ and then upgrade it to that for $H$; compare with Subsection~\ref{subsection=ProofOfMainTheorem}. On the assertion on $t=u^3$, see also Remark~\ref{remark=tandu}.
\end{remark}

\begin{remark}\label{remark=Wilson}
Remarkably, Wilson \cite{Wilson} showed that every countable RF  group is embedded into a $2$-generated RF group. Hence, we may drop the finite generation condition on a (countable) $H$ in the last assertion of Theorem~\ref{mtheorem=MainTheorem}.
\end{remark}

We emphasize that, even our goal is to construct RF groups, \textit{it is of importance in our construction to extend our framework to LEF groups}. For instance, if $G$ in Theorem~\ref{mtheorem=MainTheorem} is  RF, then the last statement of Theorem~\ref{mtheorem=MainTheorem} shows that we can embed $G$ into a $2$-generated RF group $\Lambda_2$ that satisfies all of the conditions of Theorem~\ref{mtheorem=MainTheorem}. Even though in this case the statement above is expressed inside the framework of RF groups, a clear way to prove it may be to consider \textit{LEF approximations} (see Subsection~\ref{subsection=TheSpaceOfMarkedGroups}), which do not come from group quotients. See Section~\ref{section=Organization} on the outline of the proof of Theorem~\ref{mtheorem=MainTheorem} and on some significance of use of LEF groups in the proof.

As a byproduct of Theorem~\ref{mtheorem=MainTheorem}, we may have $\Lambda_1$ being a locally-finite-lift of $\mathbb{Z}$  and $\Lambda_2$ being \textit{non-exact} in the setting of Question~\ref{question=LubotzkyWeiss}; see Corollary~\ref{corollary=NonExact} for the detailed statement. \textit{Exactness} for countable groups can be defined as admitting an \textit{amenable action}, in the sense of Anantharaman-Delaroche \cite[Definition~2.1]{OzawaICM}, on some compact Hausdorff space by homeomorphisms. Amenability is equivalent to saying that \textit{every} action on a compact Hausdorff space by homeomorphisms is amenable. From this point of view, non-exactness may be seen as an \textit{extreme negation} of amenability. Non-exactness of groups  has been considered as a pathological property of a group. Corollary~\ref{corollary=NonExact} follows from existence of (finitely generated) non-exact LEF groups due to Osajda \cite{OsajdaRF}, which is built upon the earlier work of Osajda \cite{Osajda} and Arzhantseva--Osajda \cite{ArzhantsevaOsajda}; see Section~\ref{section=NonExact}, specially Remark~\ref{remark=Osajda}.

One motivation to study finitely generated \textit{dense} subgroup $\Lambda$ of a compact group $K$ is that it provides a natural action by left multiplication \[
\Lambda\curvearrowright K,
\]
which is called a \textit{compact action}. By density of $\Lambda$ in $K$, this action is (set-theoretically) free and minimal. Moreover, with respect to the Haar measure on $K$, it is measure preserving and ergodic; see Remark~\ref{remark=Sawicki}. In our examples, the compact group $K$ is always \textit{profinite}; see Subsection~\ref{subsection=Profinite} for profiniteness. Hence, we, moreover, obtain \textit{profinite actions}.

For ergodic actions on a compact probability measure space, the \textit{spectral gap property}, see Remark~\ref{remark=Sawicki} for the definition, has been paid strong attention in relation to harmonic analysis,  rigidity and other fields. For instance, Gamburd, Jakobson and Sarnak \cite{GamburdJakobsonSarnak} made the \textit{spectral gap conjecture} on random dense generators of $\mathrm{SU}(2)$; see \cite{BourgainGamburd} for recent development in this conjecture.

The group $\Lambda_2$ appearing in Theorem~\ref{mtheorem=MainTheorem} \textit{never} has the spectral gap property for the corresponding profinite action $\Lambda_2\curvearrowright K$. Our second theorem enables us to obtain a new group $\Lambda_4$ \textit{with the spectral gap property} for its profinite action. (However, in that case, our conditions on the set of generators become considerably weak.) 

\begin{mthm}[Variety of profinite actions for the same underlying space]\label{mtheorem=SpectralGap}
Let $H$ be a countable RF group. Then, there exists a compact group $K$ and a finitely generated dense subgroup $\Lambda_4$ such that all of the following hold true:
\begin{itemize}
  \item The group $\Lambda_4$ contains an isomorphic copy of $H$; 
  \item the group $\Lambda_4$ admits a finitely generated subgroup $\Lambda_1$ that is a locally-finite-lift of $\mathbb{Z}$ and dense in $K$; and
  \item the action $\Lambda_4\curvearrowright K$ has a $\mathrm{spectral}$ $\mathrm{gap}$ with respect to the Haar probability measure  of $K$.
\end{itemize}
\end{mthm}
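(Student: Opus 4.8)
The plan is to build $\Lambda_4$ as a diagonal product of two ingredients. The first ingredient is the group $\Lambda_2$ furnished by Theorem~\ref{mtheorem=MainTheorem} applied to $H$ (or to a $2$-generated RF group containing $H$, via Wilson's embedding in Remark~\ref{remark=Wilson}): it contains a copy of $H$, it contains a dense locally-finite-lift $\Lambda_1$ of $\mathbb{Z}$, and it sits densely in a profinite group $K_2=\prod_m(\mathrm{Alt}([l_m])\wr(\mathbb{Z}/p_m\mathbb{Z}))$. The second ingredient is a finitely generated group $\Gamma$ that embeds densely into a compact group $K_3$ so that the compact action $\Gamma\curvearrowright K_3$ has a spectral gap; for this I would take $K_3$ to be a product of finite simple groups of Lie type and $\Gamma$ a dense subgroup generated by elements reducing modulo each factor to a bounded-size generating set that is an expander family, exactly as in the Ershov--Jaikin-Zapirain and Kassabov constructions referenced in the introduction (alternatively one may invoke Bourgain--Gamburd type dense free subgroups of $\mathrm{SU}(2)$, but a profinite target fits the present framework better). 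Then set $K=K_2\times K_3$ and let $\Lambda_4$ be the subgroup of $K$ generated by the diagonally-embedded generators; concretely, if $w,u$ generate $\Lambda_2$ and $a_1,\dots,a_k$ generate $\Gamma$, take $\Lambda_4=\langle (w,a_1'),(u,a_2'),a_3'',\dots\rangle$ arranged so that both coordinate projections are surjective onto $\Lambda_2$ and $\Gamma$ respectively — this is the ``formation of diagonal products'' already used in the proof of Theorem~\ref{mtheorem=MainTheorem}.

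The three required properties are then checked as follows. First, $\Lambda_4$ contains a copy of $H$: the projection $\pi_2\colon\Lambda_4\to\Lambda_2$ is surjective, but more to the point one arranges the diagonal product so that $H\subseteq\Lambda_2$ lifts isomorphically into $\Lambda_4$ — here one uses that on the support of (a marked presentation of) $H$ the second coordinate can be taken trivial, or more robustly that $\Lambda_4$ maps onto $\Lambda_2$ with the $H$-copy splitting off, exactly the mechanism by which Theorem~\ref{mtheorem=MainTheorem}(2) produces its copy of $G$. Second, the locally-finite-lift $\Lambda_1$ of $\mathbb{Z}$: choose the diagonal embedding so that the generators $w,t=u^3$ of $\Lambda_1\subseteq\Lambda_2$ have \emph{trivial} second coordinate; then $\Lambda_1$ embeds into $\Lambda_4$ unchanged, it is still a locally-finite-lift of $\mathbb{Z}$, and its closure in $K=K_2\times K_3$ is $K_2\times\{1\}$ — wait, that is not dense. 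So instead one must give $\Lambda_1$'s generators a second coordinate landing in a dense locally finite subgroup of $K_3$ that is itself a locally-finite-lift of $\mathbb{Z}$; the existence of such inside $K_3$ again comes from Theorem~\ref{mtheorem=MainTheorem} (its $\Lambda_1$ is dense in the corresponding $K$ and locally-finite over $\mathbb{Z}$), and one should really take $K_3$ of the same product-of-wreath-products shape so that one may reuse the $\Lambda_1$ of the main theorem verbatim in the second coordinate. Density of this combined $\Lambda_1$ in $K$ follows from density of each coordinate together with the fact that the two factors have no common finite simple quotients (choose the primes $p_m$ and Lie-type ranks disjointly), via Goursat's lemma. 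Third, the spectral gap: the action $\Lambda_4\curvearrowright K=K_2\times K_3$ fibers over $\Gamma\curvearrowright K_3$, and the Koopman representation of $\Lambda_4$ on $L^2(K)$ contains $L^2(K_3)$ as the subspace of $K_2$-invariant functions, on which $\Lambda_4$ acts through $\Gamma$ with a spectral gap; the point is that $L^2(K)\ominus\mathbb{C}$ decomposes as $(L^2(K_2)\ominus\mathbb{C})\otimes L^2(K_3)\ \oplus\ \mathbb{C}\otimes(L^2(K_3)\ominus\mathbb{C})$, and I would argue the gap survives on the first summand because $\Gamma$ acts on the $K_3$-tensor leg with a gap while the $\Lambda_2$-leg is merely unitary, so the combined Markov operator is a convex combination whose norm is controlled by the gapped factor — this uses that spectral gap of a product action follows from spectral gap of one factor action when the generating set is chosen to act by the gapped generators on that factor.

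The main obstacle is the interaction between the three constraints: the first two bullets want the second coordinate of many generators to be trivial (to preserve the exact copies of $H$ and of the locally-finite-lift of $\mathbb{Z}$), while the third bullet wants \emph{enough} generators to have nontrivial, expanding second coordinate so that $\Gamma\curvearrowright K_3$ has a gap, and all three together want the projections onto $K_2$ and $K_3$ to remain surjective and the combined image dense. Resolving this is a bookkeeping problem about how many free generators the diagonal product has to spend: one adds extra generators $a_i''$ with trivial first coordinate carrying the expander generators of $\Gamma$, so that removing them still leaves a group mapping onto $\Lambda_2$ and containing $H$ and $\Lambda_1$, while including them injects the gapped $\Gamma$-action into the second coordinate. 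Verifying that this does not create unwanted identifications between the two coordinates — so that $\Lambda_1$ really is a locally-finite-lift of $\mathbb{Z}$ and not some quotient, and that $H$ really embeds — is where one must be careful, and it is handled exactly as in the diagonal-product arguments underlying Theorem~\ref{mtheorem=MainTheorem}, choosing the finite building blocks in the two coordinates to have coprime orders on the relevant layers so that Goursat's lemma forces the fiber products to be full.
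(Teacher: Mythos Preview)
Your product construction $K=K_2\times K_3$ has a genuine obstruction at the spectral gap step. The Koopman representation of $\Lambda_4$ on $L^2(K_2\times K_3)$ contains $L^2(K_2)\ominus\mathbb{C}$ as the subrepresentation of $K_3$-invariant mean-zero vectors, and on this piece $\Lambda_4$ acts through its projection $\pi_{K_2}(\Lambda_4)$ to $K_2$. So a spectral gap for $\Lambda_4\curvearrowright K_2\times K_3$ would force a spectral gap for $\pi_{K_2}(\Lambda_4)\curvearrowright K_2$. But \emph{no} finitely generated dense subgroup of $K_2=\prod_m(\mathrm{Alt}([l_m])\wr(\mathbb{Z}/p_m\mathbb{Z}))$ has spectral gap: $K_2$ surjects onto the abelian profinite group $\prod_m\mathbb{Z}/p_m\mathbb{Z}$, any finitely generated dense subgroup there is an image of some $\mathbb{Z}^d$, and a pigeonhole argument shows the gap on the factor $\mathbb{Z}/p_m\mathbb{Z}$ is $O(p_m^{-1/d})\to 0$. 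This is exactly why the paper remarks, just before stating Theorem~B, that the $\Lambda_2$ of Theorem~A never has spectral gap on its $K$. Your tensor-leg heuristic breaks precisely on vectors $f\otimes\mathbf{1}$ with $f\in L^2(K_2)\ominus\mathbb{C}$: the ``gapped'' $\Gamma$-generators (which you placed with trivial $K_2$-coordinate) act trivially, and the remaining generators act through $\Lambda_2$, which has no gap.

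The paper circumvents this by working inside a \emph{single} compact group $K=\prod_m\mathrm{SL}(l_m,\mathbb{F}_p)$ rather than a product of two. This $K$ supports a dense property~$(\mathrm{T})$ subgroup $\Lambda_3$ by Ershov--Jaikin-Zapirain: writing $\mathrm{SL}(l_m,\mathbb{F}_p)\simeq\mathrm{E}(4,\mathrm{Mat}_{l_m/4}(\mathbb{F}_p))$ exhibits each factor as a quotient of the fixed property~$(\mathrm{T})$ group $\mathrm{E}(4,\mathbb{Z}\langle X,Y\rangle)$, and $\Lambda_3$ is the resulting diagonal product. The groups $\Lambda_1\leqslant\Lambda_2$ from the proof of Theorem~A are transplanted into this \emph{same} $K$ via an encoding lemma for special linear groups (the analogue of Lemma~\ref{lemma=AlternatingGroups}, using elementary matrices $e^1_{e_G,\gamma}$ and permutation matrices $\tau_\gamma$ in place of transpositions and shifts). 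One then sets $\Lambda_4=\langle\Lambda_2,\Lambda_3\rangle$. Since $\Lambda_3$ is dense, $\Lambda_3\curvearrowright K$ is ergodic; property~$(\mathrm{T})$ then gives it a spectral gap, which passes trivially to the overgroup $\Lambda_4$. The containments $H\hookrightarrow\Lambda_2\leqslant\Lambda_4$ and $\Lambda_1\leqslant\Lambda_4$ come for free, with no diagonal bookkeeping across two factors.
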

In particular, in the same way as in Section~\ref{section=NonExact}, we can take $\Lambda_4$ above to be \textit{non-exact}.

In Theorem~\ref{mtheorem=SpectralGap}, we may take
\[
K=\prod_{m\in \mathbb{N}}\mathrm{SL}(l_m,\mathbb{F}_p),
\]
for a prime $p$ and for a certain increasing sequence $(l_m)_m$. Both of the two profinite actions $\Lambda_4\curvearrowright K$ and $\Lambda_1\curvearrowright K$ constructed from $K$, $\Lambda_1$ and $\Lambda_4$ as in Theorem~\ref{mtheorem=SpectralGap} are given  by projective systems with the \textit{common} sequence of finite groups
\[
\Lambda_4\curvearrowright K=\varprojlim_{n} (\Lambda_4\curvearrowright \prod_{m\in \mathbb{N}_{\leq n}}\mathrm{SL}(l_m,\mathbb{F}_p)),\quad \Lambda_1\curvearrowright K=\varprojlim_{n} (\Lambda_1\curvearrowright \prod_{m\in \mathbb{N}_{\leq n}}\mathrm{SL}(l_m,\mathbb{F}_p));
\]
see Subsection~\ref{subsection=SpectralGap} for the construction. Furthermore, if we consider them as $\Lambda_4\curvearrowright K\curvearrowleft \Lambda_1$, where $\Lambda_4$ acts on $K$ from the left and $\Lambda_1$ does from the right, then we have two \textit{commuting} profinite actions with contrasting behaviors.

\begin{remark}\label{remark=LW}
Posterior to this work, building upon it, the author obtains some extreme counterexamples to the Lubotzky--Weiss conjecture; see \cite[Theorems~2.1 and 4.8]{MimuraLW} for the statements. These results may be seen as strengthening of Theorem~\ref{mtheorem=SpectralGap}. However, it is \textit{un}clear whether we may arrange $\Lambda_1$ and $\Lambda_2$  (playing a role similar to $\Lambda_4$ in Theorem~\ref{mtheorem=SpectralGap}) in these results in such a way that $\Lambda_1\leqslant \Lambda_2$; for Theorem~\ref{mtheorem=SpectralGap} it is possible, as we stated in that way.
\end{remark}

In this paper, we refer the reader to comprehensive treatments on coarse geometry (and concerning group properties), exactness of countable groups, and the space of marked groups (and the LEF property), respectively, to \cite{bookNowakYu}, \cite{OzawaICM}, and \cite{VershikGordon}, \cite{MimuraSakoPartI} and \cite{MimuraSakoPartII}. The reader who is not familiar with either of these topics will find further references from these treatises above. We strongly suggest the reader consult  Lemma~\ref{lemma=WreathProducts}, Remark~\ref{remark=Gruenberg} and Lemma~\ref{lemma=Absorption} in order to have some intuition of the \textit{local} point of view in the space of marked groups, which plays a key role to our proof of Theorem~\ref{mtheorem=MainTheorem}.


\section{Strategy of the proof of Theorem~\ref{mtheorem=MainTheorem} \ and organization of this paper}\label{section=Organization}
As we highlighted in the introduction, one main novelty of the current paper is that we \textit{enlarge the framework to LEF groups}, from RF groups, to construct RF groups with specified properties. The following two points may be crucial to our construction:
\begin{itemize}
  \item The LEF property is \textit{closed} inside the space of marked groups; see Lemma~\ref{lemma=WreathProducts}. This means, for a convergent sequence of marked groups in the Cayley topology, if each marked group in the sequence is LEF, then so is the limit. See the proof of Theorem~\ref{mtheorem=MainTheorem}, more precisely Step~3 in Subsection~\ref{subsection=ProofOfMainTheorem}, how we utilize this closeness property. 

On the other hand, the RF property is \textit{not} closed; see Remark~\ref{remark=Gruenberg}. It implies that it is sometimes much easier to construct a LEF group with specified property than to obtain such a RF group; compare with Remark~\ref{remark=Osajda}.
  \item The LEF property for finitely generated groups is \textit{closed under taking standard $($restricted$)$ wreath products}. This provides us room that suffices to apply variants of \textit{Hall's embedding argument} (\cite[1.5]{Hall}) and of the \textit{absorption trick} (\cite[Lemma~6.13]{BartholdiErschler}). The former is employed to reduce the number of generators to $2$; the latter is used to construct two system of markings of a fixed sequence of finite groups that have considerably different behaviors at the Cayley limits (see Step~3 in the outlined proof of Theorem~\ref{mtheorem=MainTheorem} below). See, respectively, Lemma~\ref{lemma=Hall} and Lemma~\ref{lemma=Absorption} for precise statements; we integrate these two arguments into a key proposition; Proposition~\ref{proposition=Absorption}.

In contrast, permanence of the RF property under formation of standard wreath products is \textit{quite restrictive}; see Remark~\ref{remark=Gruenberg}.
\end{itemize}
Concerning the first point, one of the major open problems on geometric group theory asks whether all (Gromov-)hyperbolic groups are RF. If the answer to this problem is affirmative, then all groups constructed as (infinitely presented) limits of (possibly graphical) finitely presented small cancellation groups will be automatically LEF. By contrapositive, it follows that if there exists one infinitely presented small cancellation group that is not LEF, then it will resolve the problem above in the negative.

The price here to pay for switching our framework from RF groups to LEF ones is that our outcome is only a LEF group, not a RF group in general. However, the \textit{diagonal product} of marked groups enables us to have a \textit{RF} group out of a LEF group such that it is an \textit{LFNF-lift}, see Definition~\ref{definition=LFNF-lifts}, of the original LEF group (in particular, it is a locally-finite-lift); see Lemma~\ref{lemma=DiagonalProducts}. Note that Wilson \cite{Wilson} argued in a similar way to one as in the original embedding argument of Hall, which was inspired by the work of B. H. Neumann and H. Neumann \cite{NeumannNeumann}, by employing standard unrestricted wreath products; Wilson considered a split extension of an infinite products of them and recovered the RF property to establish the aforementioned result in Remark~\ref{remark=Wilson}. However, in our motivation concerning Question~\ref{question=LubotzkyWeiss}, it may not be clear whether we can take a similar strategy to that.

The proof of Theorem~\ref{mtheorem=MainTheorem} consists of the following four main steps.
\begin{enumerate}
  \item[\textit{Step~$1$}.] For a given finitely generated LEF group $G$, embed it into a \textit{LEF} group $G^{\#}$ that is generated by (finitely many) torsions; see Lemma~\ref{lemma=Auxiliary} in Subsection~\ref{subsection=Auxiliary}. This step is used to obtain the final group $\Lambda_1$ that is a locally-finite-lift of $\mathbb{Z}$.
  \item[\textit{Step~$2$}.] Encode information of a LEF approximations of the  $G^{\#}$ above into alternating groups; see Lemma~\ref{lemma=AlternatingGroups} in Subsection~\ref{subsection=Encoding}. This step is important to ensure that the final groups $\Lambda_1$ and $\Lambda_2$ are \textit{dense} in $K$, as well as to obtain an isomorphic copy of $G^{\#}$ inside one of the Cayley limit group of a sequence of $2$-marked groups. For the former, see Lemma~\ref{lemma=Goursat}, which is a byproduct of Goursat's lemma.
  \item[\textit{Step~$3$}.] Combine a variant of an \textit{embedding argument of Hall}  and that of \textit{the absorption trick}; see, respectively, Lemma~\ref{lemma=Hall} in Subsection~\ref{subsection=Hall} and Lemma~\ref{lemma=Absorption} in Subsection~\ref{subsection=AbsorptionTrick} for basic ideas of these arguments. Obtain two systems of  \textit{$2$-markings} of finite groups which are related to a LEF approximation of $G^{\#}$ that satisfies the following conditions: With respect to one marking, the Cayley limit group is $(\mathbb{Z}/2\mathbb{Z})\wr \mathbb{Z}$; with respect to the other, the Cayley limit group contains a copy of $G^{\#}$. This is the key step to the whole proof. See Proposition~\ref{proposition=Absorption} in Subsection~\ref{subsection=Absorption} for details. 
  \item[\textit{Step~$4$}.] Take the \textit{diagonal products} associated, respectively, with the two LEF approximations constructed in the third step; see Lemma~\ref{lemma=DiagonalProducts} in Subsection~\ref{subsection=DiagonalProducts}. By Lemma~\ref{lemma=Goursat}, this procedure provides us with desired $w$, $t$ and $u$ as in Theorem~\ref{mtheorem=MainTheorem}. 
\end{enumerate}

To prove the last assertion on a RF group $H$, we a priori take a specific LEF approximation coming from a projective system in order to construct an isomorphic copy of $H$ in the resulting $\Lambda_2$; see Subsection~\ref{subsection=Profinite}. In order to take $t=u^3$, we employ finite dihedral groups $D_{p_m}$ and apply encoding as in Step~2 twice. See Subsection~\ref{subsection=ProofOfMainTheorem} for details.

\begin{remark}\label{remark=(T)}
It is known that for an infinite $H$, the group $G\wr H$ \textit{never} has property $(\mathrm{T})$ unless $G$ is trivial. Hence, despite that Question~\ref{question=LubotzkyWeiss} is inspired by the Lubotzky-Weiss conjecture, Theorems~\ref{mtheorem=MainTheorem} or \ref{mtheorem=SpectralGap} does \textit{not} produce a dense group $\Lambda_2$ with property $(\mathrm{T})$. See Remark~\ref{remark=LW} and \cite{MimuraLW} for the further development concerning on that conjecture.
\end{remark}

The organization of the present paper goes as follows: In Section~\ref{section=NonExact}, we state Corollary~\ref{corollary=NonExact} and deduce it from Theorem~\ref{mtheorem=MainTheorem}. We explain some motivation of $(ii)$ of Corollary~\ref{corollary=NonExact}. In Section~\ref{section=Preliminaries}, we explain several ingredients of the proof of Theorem~\ref{mtheorem=MainTheorem}, including a brief introduction of the space of marked groups and diagonal products (Subsections~\ref{subsection=TheSpaceOfMarkedGroups} and \ref{subsection=DiagonalProducts}), a byproduct of  the Goursat lemma (Subsection~\ref{subsection=Goursat}), profinite completions (Subsection~\ref{subsection=Profinite}), a variant of Hall's embedding argument (Subsection~\ref{subsection=Hall}), and encoding into symmetric/alternating groups (Subsections~\ref{subsection=Encoding}). Section~\ref{section=ProofOfMainTheorem} is devoted to the proof of Theorem~\ref{mtheorem=MainTheorem}. In Subsections~\ref{subsection=Local} and \ref{subsection=AbsorptionTrick}, we describe some intuition of Cayley convergences in the space of marked groups and ideas based on it. We explain the first step (Lemma~\ref{lemma=Auxiliary} in Subsection~\ref{subsection=Auxiliary}) and the third step (Proposition~\ref{proposition=Absorption} in Subsection~\ref{subsection=Absorption}) in the outlined proof above. In Section~\ref{section=EmbeddingWithControl}, we prove Theorem~\ref{mtheorem=SpectralGap}.

\section{Application to ``amenability versus non-exactness''}\label{section=NonExact}
Here we deduce the following corollary to Theorem~\ref{mtheorem=MainTheorem}. 
See \cite[Chapter~3]{bookNowakYu} for the definition of elementary amenable groups. As we mentioned in Remark~\ref{remark=LFNF-lifts1}, we give the definition of \textit{LFNF-lifts}.

\begin{definition}[LFNF-lifts]\label{definition=LFNF-lifts}
Let $\Lambda$ and $\Gamma$ be countable groups.
\begin{enumerate}[$(1)$]
\item For a subgroup $N$ of a countable group $\Lambda$, we say that $N$ is \textit{locally fully normally finite}  if for every (non-empty) finite subset $F$ of $N$, the normal closure $\llangle F\rrangle_{\Lambda}$ of $F$ \textit{in} $\Lambda$ is finite.
\item We say $\Lambda$ is a \textit{locally-fully-normally-finite-lift}, an \textit{LFNF-lift} for short, of $\Gamma$ if it admits a short exact sequence
\[
1\quad \longrightarrow\quad N \quad \longrightarrow \quad  \Lambda \quad \longrightarrow \quad \Gamma \quad \longrightarrow \quad 1,
\]
where $N$ is locally fully normally finite.
\end{enumerate}
\end{definition}
For instance, the \textit{standard} (\textit{restricted}) \textit{wreath product} $(\mathbb{Z}/2\mathbb{Z})\wr \mathbb{Z}$, see Subsection~\ref{subsection=Local} for the definition,  is a locally-finite-lift of $\mathbb{Z}$ (it is moreover a local-normal-finite-lift of $\mathbb{Z}$ in the sense of \cite{BrieusselZhengHFD}). However, it is \textit{not} an LFNF-lift of $\mathbb{Z}$.

Note that local-full-normal-finiteness implies local finiteness. Unlike local finiteness (or local-normal-finiteness in the sense of \cite{BrieusselZhengHFD}), this property is \textit{not} ``intrinsic'', namely, this is a property as a \textit{subgroup} $N$ \textit{of} $\Lambda$, but of a group $N$ alone.

\begin{corollary}\label{corollary=NonExact}
\begin{enumerate}[$(i)$]
\item Let $\mathcal{P}$ be a group property of countable discrete groups. Assume that $\mathcal{P}$ satisfies the following two conditions:
\begin{itemize}
    \item $\mathcal{P}$ is closed under taking LFNF-lifts;
  \item $\mathcal{P}$ is closed under taking overgroups, namely, if $G\geqslant H$ and $H$ has $\mathcal{P}$, then so does $G$.
\end{itemize}
 Assume that there exists a $($finitely generated$)$ LEF group with $\mathcal{P}$. Then there exist a compact group $K$ and two elements $w,u\in K$ such that the following three assertions are fulfilled:
\begin{enumerate}[$(1)$]
  \item The group $\Lambda_1=\langle w,u^3\rangle$ is a locally-finite-lift of $\mathbb{Z}$. Furthermore, $\Lambda_1$ an LFNF-lift of $C\wr \mathbb{Z}$, where $C$ is a finite cyclic group $($may be taken as $\mathbb{Z}/2\mathbb{Z}$$)$;
  \item the group $\Lambda_2=\langle w,u\rangle$ has $\mathcal{P}$;
  \item both of $\Lambda_1$ and $\Lambda_2$ are dense in $K$.
\end{enumerate}

In the statements above, we may drop the first condition on the property $\mathcal{P}$ if there exists a $($countable$)$ RF group with $\mathcal{P}$.
\item $($Amenability versus non-exactness$)$ There exist a compact group $K$ and  $w,u\in K$ such that $\Lambda_1=\langle w,u^3\rangle$ is an LFNF-lift of $(\mathbb{Z}/2\mathbb{Z})\wr \mathbb{Z}$, $\Lambda_2=\langle w,u\rangle$ is $\mathrm{not}$-exact, and both of $\Lambda_1$ and $\Lambda_2$ are dense in $K$.
\end{enumerate}
Moreover, for a given $($countable$)$ RF group $H$, we may arrange $K$, $w$ and $u$ above $($both in $(i)$ and $(ii)$$)$ such that $\Lambda_2$ contains an isomorphic copy of $H$.
\end{corollary}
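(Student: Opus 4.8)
The plan is to derive Corollary~\ref{corollary=NonExact} from Theorem~\ref{mtheorem=MainTheorem} essentially as a formal consequence, tracking carefully the property $\mathcal{N}$ of being a locally-finite-lift (or LFNF-lift) through the constructions.

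\medskip

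\emph{Proof of $(i)$.} Suppose first that there exists a finitely generated LEF group $G$ with $\mathcal{P}$. Apply Theorem~\ref{mtheorem=MainTheorem} to this $G$: we obtain sequences $(l_m)_m$, $(p_m)_m$, the compact group $K=\prod_{m}(\mathrm{Alt}([l_m])\wr(\mathbb{Z}/p_m\mathbb{Z}))$, and elements $w,t,u\in K$ with $t=u^3$, such that $\Lambda_1=\langle w,u^3\rangle$ is a locally-finite-lift of $\mathbb{Z}$, $\Lambda_2=\langle w,u\rangle$ contains an isomorphic copy of a locally-finite-lift of $G$, and both $\Lambda_1$ and $\Lambda_2$ are dense in $K$. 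For the refined description of $\Lambda_1$ as an LFNF-lift of $C\wr\mathbb{Z}$ with $C$ finite cyclic (indeed $C=\mathbb{Z}/2\mathbb{Z}$), I invoke Remark~\ref{remark=LFNF-lifts1} together with Remark~\ref{remark=LFNF-lifts2}: the construction of $\Lambda_1$ in the proof of Theorem~\ref{mtheorem=MainTheorem} (via Step~1, which makes $G^{\#}$ generated by torsions, and Step~3, whose relevant Cayley limit is $(\mathbb{Z}/2\mathbb{Z})\wr\mathbb{Z}$) yields exactly this stronger conclusion; I restate it here. This proves $(1)$ and $(3)$. For $(2)$, let $\tilde{G}$ be the locally-finite-lift of $G$ embedded in $\Lambda_2$, sitting in $1\to N\to\tilde{G}\to G\to 1$ with $N$ locally finite. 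A locally finite group is, in particular, an LFNF-lift of the trivial group inside itself? — more carefully: I should check that $\tilde{G}$ is itself an LFNF-lift of $G$, i.e.\ that $N$ is locally fully normally finite \emph{in} $\tilde{G}$. This needs a look at the construction: the relevant $N$ arises as (a subgroup of) a restricted sum $\bigoplus \mathrm{Alt}([l_m])$ of finite simple groups on which the quotient acts by permuting coordinates, so the normal closure in $\tilde{G}$ of a finite $F\subseteq N$ is contained in a finite sub-sum and hence finite. Granting this, $\tilde{G}$ has $\mathcal{P}$ because $\mathcal{P}$ is closed under LFNF-lifts and $G$ has $\mathcal{P}$; then $\Lambda_2\geqslant\tilde{G}$ has $\mathcal{P}$ because $\mathcal{P}$ is closed under overgroups. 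This proves $(2)$.

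\medskip

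If instead only a countable RF group $H$ with $\mathcal{P}$ is given (and we wish to drop the first closure hypothesis on $\mathcal{P}$), we first use Wilson's theorem (Remark~\ref{remark=Wilson}) to embed $H$ into a $2$-generated RF group $H'$, which still has $\mathcal{P}$ since $\mathcal{P}$ passes to overgroups — and conversely $H'$ has $\mathcal{P}$ already because it contains $H$; then apply the last assertion of Theorem~\ref{mtheorem=MainTheorem} to $H'$ (or directly to $H$ via Wilson inside the proof), so that $\Lambda_2$ contains an isomorphic copy of $H$. Then $\Lambda_2\geqslant H$ has $\mathcal{P}$ by overgroup-closure alone, with no appeal to LFNF-lift-closure. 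This also establishes the final ``Moreover'' clause: in both the LEF and the RF input cases, the last assertion of Theorem~\ref{mtheorem=MainTheorem} places an isomorphic copy of the prescribed RF group $H$ inside $\Lambda_2$, and since the construction giving $t=u^3$ is the one used throughout, the same $K$, $w$, $u$ work.

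\medskip

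\emph{Proof of $(ii)$.} By the work of Osajda \cite{OsajdaRF}, building on \cite{Osajda} and \cite{ArzhantsevaOsajda} (see Remark~\ref{remark=Osajda}), there exists a finitely generated LEF group $G_0$ that is non-exact. Take $\mathcal{P}$ to be non-exactness. Non-exactness is closed under taking overgroups: exactness passes to subgroups, so if a subgroup is non-exact the ambient group is non-exact. Non-exactness is also closed under taking LFNF-lifts: an LFNF-lift $1\to N\to\Lambda\to\Gamma\to 1$ with $N$ locally finite has $N$ amenable, hence exact, and an extension of an exact group by an exact group is exact, so if $\Lambda$ were exact then $\Gamma$ would be exact; contrapositively, if $\Gamma$ is non-exact then $\Lambda$ is non-exact. (Here one only needs local finiteness of $N$, not full normality; but LFNF-lifts are in particular locally-finite-lifts.) Thus $\mathcal{P}=$ non-exactness satisfies both hypotheses of part $(i)$, and applying $(i)$ to $G_0$ yields $K$, $w$, $u$ with $\Lambda_1=\langle w,u^3\rangle$ an LFNF-lift of $(\mathbb{Z}/2\mathbb{Z})\wr\mathbb{Z}$, $\Lambda_2=\langle w,u\rangle$ non-exact, and both dense in $K$. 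The ``Moreover'' clause for $(ii)$ follows exactly as in $(i)$, using the last assertion of Theorem~\ref{mtheorem=MainTheorem}.

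\medskip

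The only non-formal point — hence the main obstacle — is verifying that the locally finite kernel $N$ produced in Theorem~\ref{mtheorem=MainTheorem} is in fact locally \emph{fully normally} finite in the relevant ambient group (both for $\Lambda_1$ and for the copy of $\tilde{G}$ inside $\Lambda_2$), so that one may legitimately speak of \emph{LFNF}-lifts and apply the first closure hypothesis on $\mathcal{P}$. This is settled by inspecting the diagonal-product construction of Step~4 and Lemma~\ref{lemma=DiagonalProducts}: the kernel embeds into a restricted direct sum of the finite pieces $\mathrm{Alt}([l_m])\wr(\mathbb{Z}/p_m\mathbb{Z})$ on which the quotient acts by permuting the (finitely supported) coordinates within each block, so the $\Lambda$-normal closure of any finite subset of $N$ is contained in the sum over finitely many blocks and is therefore finite. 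Everything else is a bookkeeping exercise in the permanence properties (overgroups, extensions, amenability $\Rightarrow$ exactness) that we have recorded above.
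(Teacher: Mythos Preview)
Your treatment of part $(i)$ is essentially the paper's argument (Theorem~\ref{mtheorem=MainTheorem} plus Remarks~\ref{remark=Wilson} and~\ref{remark=LFNF-lifts2}), and your verification that the relevant lifts are LFNF-lifts is correct, if slightly roundabout: you do not need to inspect the kernel by hand, since Lemma~\ref{lemma=DiagonalProducts}(3) already states that the diagonal product is an LFNF-lift of its Cayley limit, and LFNF-ness of a normal subgroup passes to smaller ambient groups automatically (normal closures only shrink).

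There is, however, a genuine logical error in your proof of $(ii)$. You write: ``an extension of an exact group by an exact group is exact, so if $\Lambda$ were exact then $\Gamma$ would be exact.'' This deduction is invalid. The extension fact says: if $N$ and $\Gamma$ are exact then $\Lambda$ is exact. Combined with $N$ amenable (hence exact), this yields ``$\Gamma$ exact $\Rightarrow$ $\Lambda$ exact'', which is the \emph{wrong direction}. What you need is ``$\Lambda$ exact $\Rightarrow$ $\Gamma$ exact'', i.e.\ that exactness passes to the quotient $\Gamma=\Lambda/N$. This is \emph{not} a formal consequence of the extension result and is genuinely nontrivial: exactness does not pass to arbitrary quotients (free groups are exact but have non-exact quotients). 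The correct input is Nowak's theorem \cite[Theorem~2]{Nowak}, stated in the paper as Proposition~\ref{proposition=Nowak}: exactness is closed under taking quotients by \emph{amenable} normal subgroups. With this in place, your contrapositive argument goes through. So the approach is right, but you have invoked the wrong permanence property, and the one you actually need is a substantive theorem rather than a bookkeeping fact.
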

The group $\Lambda_1$ as in $(i)$ of Corollary~\ref{corollary=NonExact}, in particular, has asymptotic dimension $1$; see Remark~\ref{remark=LFNF-lifts2} for further properties of $\Lambda_1$ in relation to random walks.
See also Remark~\ref{remark=WarpedCones} for a possible application of $(ii)$ in terms of \textit{warped cones}.

Exactness (for countable groups), as we mentioned in the introduction, is known to be equivalent to \textit{property A} of Yu (see \cite[Chapter~4]{bookNowakYu}); see \cite[Theorem~2.5]{OzawaICM}. See \cite{Gromovrandomwalk} and \cite{ArzhantsevaDelzant}, as well as \cite{ArzhantsevaOsajda}, \cite{Osajda} and \cite{OsajdaRF}, for the history of constructions of non-exact groups.

\begin{remark}\label{remark=Osajda}
Item $(ii)$ of Corollary~\ref{corollary=NonExact} in particular provides an example of an  RF non-exact group $\Lambda_2$, which answers \cite[Problem 10.4.6]{bookBrownOzawa}, that is different from the first example by Osajda \cite{OsajdaRF}. In his work, the main difficulty was to ensure the RF property of the resulting group. The LEF property of it is automatic because this group is constructed as a limit in the Cayley topology of RF groups; \textit{such a group is LEF because it is, in particular, a Cayley limit of LEF groups and the LEF property is a closed property.} Recall our discussion in Section~\ref{section=Organization}. In \cite{Osajda} and \cite{ArzhantsevaOsajda}, discussion on the LEF property was not explicitly written. In the later work of Osajda \cite{OsajdaRF}, the construction that satisfies the condition above was given.

In our construction, on the other hand, the RF property follows from general theory \textit{out of the LEF property}; compare with the statement of Theorem~\ref{mtheorem=MainTheorem}. However, in contrast to the groups constructed in \cite{Osajda} and \cite{ArzhantsevaOsajda}, and \cite{OsajdaRF}, the group $\Lambda_2$ in our construction may not be an (infinitely presented) graphical small cancellation group. We do not know whether our $\Lambda_2$ as in $(ii)$ of Corollary~\ref{corollary=NonExact} is \textit{a-$\mathrm{T}$-menable} (see \cite[6.2]{bookNowakYu}), whereas constructions in \cite{Osajda} and \cite{ArzhantsevaOsajda}, and \cite{OsajdaRF} provided a-$\mathrm{T}$-menable non-exact groups (see \cite[Remark~9.5]{MimuraSakoPartII}).
\end{remark}

\begin{proof}[Proof of ``Theorem~$\ref{mtheorem=MainTheorem}$ implies Corollary~$\ref{corollary=NonExact}$'']

Item $(i)$ is a direct corollary; see also Remarks~\ref{remark=Wilson} and \ref{remark=LFNF-lifts2}. To prove $(ii)$, observe that the work of \cite{OsajdaRF}, \cite{Osajda} and \cite{ArzhantsevaOsajda} implies existence of  a finitely generated LEF non-exact group; see Remark~\ref{remark=Osajda}. The other key is the following.

\begin{proposition}\label{proposition=Nowak}
Let $\mathcal{P}$ be the $\mathrm{failure}$ of exactness $($for countable discrete groups$)$. Then $\mathcal{P}$ is closed under taking overgroups. It is also closed under formation of amenable-lifts; in particular, it is closed under taking LFNF-lifts.
\end{proposition}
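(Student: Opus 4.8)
The statement to prove, Proposition~\ref{proposition=Nowak}, asserts two permanence properties of non-exactness: closure under passing to overgroups and closure under amenable-lifts (hence LFNF-lifts, since locally finite groups are amenable).

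\medskip

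The plan is to argue by contraposition, converting each closure statement about the failure of exactness into the corresponding hereditary statement about exactness itself. For the first assertion, I would recall the standard fact (see, e.g., \cite[Chapter~5]{bookBrownOzawa}) that exactness passes to subgroups: if a countable group $G$ is exact and $H\leqslant G$, then $H$ is exact. Indeed, an exact action $G\curvearrowright X$ on a compact Hausdorff space by homeomorphisms restricts to an amenable action $H\curvearrowright X$; alternatively, one invokes that exactness is equivalent to property A of Yu (as cited in the excerpt via \cite[Theorem~2.5]{OzawaICM}), and property A is inherited by subspaces, hence by subgroups equipped with a word metric. Contrapositively, if $H$ fails to be exact and $H\leqslant G$, then $G$ fails to be exact; that is, $\mathcal{P}$ is closed under taking overgroups.

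\medskip

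For the second assertion, I would use the permanence of exactness under extensions by amenable groups: if $1\to N\to\Lambda\to\Gamma\to 1$ is exact with $N$ amenable and $\Gamma$ exact, then $\Lambda$ is exact. This is again classical --- for countable discrete groups it follows from the fact that exactness passes to extensions when the quotient (or the kernel) is exact, together with amenability $\Rightarrow$ exactness; one reference is \cite[Chapter~5]{bookBrownOzawa}. Now suppose $\Lambda$ is an amenable-lift of $\Gamma$, so there is a short exact sequence with amenable kernel $N$ and quotient $\Gamma$. If $\Lambda$ were exact, then the quotient $\Gamma=\Lambda/N$ would be exact as well (exactness passes to quotients). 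Contrapositively: if $\Gamma$ is non-exact, then $\Lambda$ is non-exact, i.e., $\mathcal{P}$ is closed under amenable-lifts. Since a locally finite group is amenable (indeed locally finite $\Rightarrow$ elementary amenable $\Rightarrow$ amenable), every LFNF-lift is in particular an amenable-lift, so closure under LFNF-lifts follows immediately.

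\medskip

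I do not expect any genuine obstacle here: both ingredients --- hereditarity of exactness under subgroups and quotients, and the extension property with amenable kernel --- are standard in the theory of exact groups, and the proof is essentially a two-line contraposition once these are cited. The only point requiring a little care is to phrase the quotient statement correctly: from exactness of $\Lambda$ one concludes exactness of $\Gamma=\Lambda/N$ directly (exactness passes to quotients for discrete groups), which is exactly what the contrapositive of ``closed under amenable-lifts'' needs; one does not even need the full extension theorem for this direction, though it is convenient to have it stated. I would make sure the cited references (e.g., \cite{bookBrownOzawa}, \cite{OzawaICM}) cover both the subgroup case and the quotient/extension case, and then the proof is complete.
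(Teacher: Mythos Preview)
Your argument for the first assertion (closure under overgroups) is correct and matches the paper's: exactness passes to subgroups, so non-exactness passes to overgroups.

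For the second assertion, however, there is a genuine gap. You write that ``exactness passes to quotients for discrete groups'' and treat this as a basic fact, even remarking that ``one does not even need the full extension theorem for this direction.'' But exactness does \emph{not} pass to arbitrary quotients: free groups are exact, and every countable group is a quotient of a free group, so this would make every countable group exact. The extension theorem you first invoke (if $N$ is amenable and $\Gamma$ is exact then $\Lambda$ is exact) is true but, as you implicitly notice, gives the wrong direction under contraposition.

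What is actually needed is the nontrivial theorem (due to Nowak \cite[Theorem~2]{Nowak}, or in $C^\ast$-algebraic form to Kirchberg and others) that exactness passes to quotients \emph{by amenable normal subgroups}: if $\Lambda$ is exact and $N\trianglelefteq\Lambda$ is amenable, then $\Lambda/N$ is exact. This is precisely what the paper invokes. With that ingredient in hand your contrapositive argument goes through; without it, the step ``$\Lambda$ exact $\Rightarrow$ $\Gamma=\Lambda/N$ exact'' is unjustified.
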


\begin{proof}
It is well known that exactness is closed under taking subgroups (see discussion below Definition~2.4  in \cite{OzawaICM}). Nowak \cite[Theorem~2]{Nowak} showed that exactness is closed under taking quotients by amenable normal subgroups. (Nowak proved this closeness in the context of property A; it may be also proved in terms of exactness of reduced $C^{\ast}$-algebras by virtue of Theorems of Kirchberg and other researchers.) 
\end{proof}

These two keys above enable us to deduce $(ii)$ from $(i)$.
\end{proof}
We may also prove $(ii)$ by embedding  the (finitely generated) RF non-exact group, constructed by \cite{OsajdaRF}, into $\Lambda_2$.

\begin{remark}\label{remark=Sawicki}
As we mentioned in the introduction, for a finitely generated dense subgroup $\Lambda$ of a compact group $K$, we may construct the corresponding \textit{compact action} $\Lambda\curvearrowright K$; it is (set-theoretically) free, minimal (each orbit is dense), Haar-measure preserving, and Haar-\textit{ergodic}, that means, 
\[
L_2(K,\mathrm{Haar})^{\Lambda}=\mathbb{C}\mathbf{1},
\]
where $\mathbf{1}$ means the constant $1$ function on $K$. Here $\Lambda$ acts as Koopman representations: $\lambda\cdot f(\omega)=f(\lambda^{-1} \omega)$ for $\lambda\in \Lambda$ and Haar-almost all $\omega\in K$, and $L_2(K,\mathrm{Haar})^{\Lambda}$ denotes the subspace of $\Lambda$-invariant vectors. Indeed, to see ergodicity, note that $K\curvearrowright K$ is ergodic. By density of $\Lambda$ in $K$, we obtain the desired ergodicity.

Let $G$ be a finitely generated group and $G\curvearrowright (\Omega,\kappa)$ be a measure preserving action on a measure space. We say that this action, \textit{possibly non-ergodic}, has a \textit{spectral gap} if for some (equivalently, every) finite generating set $S$ of $G$, there exists $\epsilon=\epsilon_{S}>0$ such that for every $f\in L_2(\Omega,\kappa)$, it holds that
\[
\sup_{s\in S}\|\overline{s\cdot f}-\overline{f}\|_{L_2(\Omega,\kappa)/(L_2(\Omega,\kappa)^{G})}\geq \epsilon\|\overline{f}\|_{L_2(\Omega,\kappa)/(L_2(\Omega,\kappa)^{G})}.
\]
Here  $f\mapsto \overline{f}$ is the quotient map $L_2(\Omega,\kappa)\twoheadrightarrow L_2(\Omega,\kappa)/(L_2(\Omega,\kappa)^{G})$. Though the exact value of  $\epsilon_S$ be affected by the choice of $S$, strict positivity of (best possible) $\epsilon_S$ does not depend on it. Kazhdan's property $(\mathrm{T})$ for $G$ implies that every measure preserving action $G\curvearrowright (\Omega,\kappa)$ has a spectral gap, in the sense above. 

\end{remark}

\begin{remark}\label{remark=WarpedCones}
Profinite actions might have the potential to serve as a source to construct metric spaces of interest via \textit{warped cones}. Sawicki \cite[4.5]{SawickiCounterexample} constructed a class of metric spaces with some pathological property (such as the failure of the coarse Baum--Connes conjecture) such that they are \textit{not} coarsely equivalent to any sequence of graphs (in particular, they are far from being geodesic spaces in any sense). They are constructed as (level sets of) \textit{warped cones} of certain group actions $\Gamma\curvearrowright Z$ on the Cantor set $Z$. Nowak--Sawicki \cite{NowakSawicki} showed that if the action has a certain spectral gap property (with respect to some probability measure invariant under the group action) relative to a Banach space $E$, then the resulting spaces do not admit \textit{coarse embeddings} (see \cite[Chapter~5]{bookNowakYu}) into $E$. Sawicki \cite[Proposition~7.4]{SawickiPropertyA} also introduced a notion of \textit{piecewise property $A$}, and showed that under certain conditions, a warped cone has this property if and only if the group $\Gamma$ is exact.

Note that, in the statement of Corollary~\ref{corollary=NonExact}, we may take $H=\mathrm{SL}(n,\mathbb{F}_p[X])$ for $n\geq 3$, where $\mathbb{F}_p[X]$ denotes the one-variable polynomial ring with indeterminate  $X$ over $\mathbb{F}_p$ and $p$ prime. By a celebrated result of V. Lafforgue \cite{Lafforgue1}, every measure-preserving action of that group on a probability measure space has a spectral gap, in the sense of Remark~\ref{remark=Sawicki}, relative to every Banach space $E$ of \textit{non-trivial type}; see \cite[Example~4.11.$(4)$]{MimuraSakoPartII} for the definition and further references. 
\end{remark}

\section{Ingredients of the proof of Theorem~\ref{mtheorem=MainTheorem}}\label{section=Preliminaries}
In this paper,  hereafter, for each $n\in\mathbb{N}$ we set $\mathbb{M}_n=(\mathbb{N}_{\leq n}=)\{0,1,\ldots,n\}$. Our convention of the group commutator is $[\gamma_1,\gamma_2]=\gamma_1^{-1}\gamma_2^{-1}\gamma_1\gamma_2$.

\subsection{The space of marked groups}\label{subsection=TheSpaceOfMarkedGroups}
Fix $k\in \mathbb{N}_{\geq 1}$. \textit{The space of $k$-marked groups}, we write as $\mathcal{G}(k)$, was intensively studied by Grigorchuk \cite{Grigorchuk}. Here we briefly recall the definition of it and diagonal products associated to LEF approximations. We refer the reader to \cite[Subsections~2.1, 2.3 and 5.1]{MimuraSakoPartI} for more details and references. Throughout this paper, for a group $G$, write its unit as $e_G$.

A \textit{$k$-marked group} is a pair $(G;S)=(G;s_1,\ldots ,s_k)$ of a group $G$ and an ordered $k$-tuple $S=(s_1,\ldots ,s_k)$ (called a \textit{$k$-marking} of $G$) that generates $G$ (as a group). We use the symbol $\mathbf{G}$ to express a marked group. We identify two marked groups $\mathbf{G}_1=(G_1;s_1^{(1)},\ldots ,s_k^{(1)})$ and $\mathbf{G}_2=(G_2;s_1^{(2)},\ldots ,s_k^{(2)})$ if there exists a marked group isomorphism $\phi$, namely, an isomorphism $\phi\colon G_1\stackrel{\simeq}{\to}G_2$ that sends each $s_j^{(1)}$ to $s_j^{(2)}$ for $j\in [k]$. In the setting above, we say a map $\psi\colon \mathbf{G}_1\to \mathbf{G}_2$ is a \textit{marked group quotient} if $\psi\colon G_1\twoheadrightarrow G_2$ is a homomorphism that sends each $s_j^{(1)}$ to $s_j^{(2)}$ for $j\in [k]$. Each $k$-marked group $\mathbf{G}=(G;s_1,\ldots ,s_k)$ corresponds to a combinatorial object, the (right) \textit{Cayley diagram} $\mathrm{CayD}(\mathbf{G})$. It is a graph with edge colorings with color set $[k]$ and with edge orientations, defined as follows. The vertex set is $G$, and for each $j\in [k]$, we draw an edge with orientation from $g$ to $s_jg$ in color $j(\in [k])$. We endow it with the shortest-path distance $d_{\mathbf{G}}$ (here we ignore edge orientations to consider shortest paths). For each $R\in \mathbb{N}$ and $g\in G$, denote by $B_{\mathrm{CayD}(\mathbf{G})}(g,R)$ the (closed) $R$-ball centered at $g$. It is equipped with the structure of a \textit{rooted diagram}, more precisely, we set the root as $g\in G$, the vertex set as $B_{\mathbf{G}}(g,R)=\{h\in G:d_{\mathbf{G}}(h,g)\leq R\}$, and the edge set as the set of all edges whose both terminal points belong to $B_{\mathbf{G}}(g,R)$, with remembering all of those edge-colorings and edge-orientations.

The space $\mathcal{G}(k)$ is endowed with a natural topology, which is \textit{compact} and metrizable. It is called the \textit{Cayley topology} in some literature. The convergence  in the Cayley topology corresponds to the \textit{local convergence of rooted diagrams}. More precisely, $\mathbf{G}_m \to \mathbf{G}_{\infty}$ in the Cayley topology if and only if the following holds: For every $R\in \mathbb{N}$, there exists $m_R\in \mathbb{N}$ such that for every $m\in \mathbb{N}_{\geq m_R}$, the two rooted diagrams $B_{\mathrm{CayD}(\mathbf{G_m})}(e_{G_m},R)$ and $B_{\mathrm{CayD}(\mathbf{G_{\infty}})}(e_{G_{\infty}},R)$ are isomorphic (\textit{as rooted diagrams}). Here two rooted diagrams $\mathcal{D}_1$ and $\mathcal{D}_2$, with the common edge color set $[k]$, is \textit{isomorphic} if there exists a graph automorphism between them that sends the root of $\mathcal{D}_1$ to the root of $\mathcal{D}_2$ and that preserves the edge color (in $[k]$) and the edge orientation of each edge. We write the convergence $\mathbf{G}_m \to \mathbf{G}_{\infty}$ in the Cayley topology as $\mathbf{G}_m \stackrel{\mathrm{Cay}}{\to} \mathbf{G}_{\infty}$. 

In more group theoretic language, the Cayley convergence is described as follow: $\mathbf{G}_m=(G_m;s_1^{(m)},\ldots ,s_k^{(m)})$ converges to $\mathbf{G}_{\infty}=(G_{\infty};s_1^{(\infty)},\ldots ,s_k^{(\infty)})$ in the Cayley topology if and only if the map sending $s_j^{(m)}$ to $s_j^{(\infty)}$ is a \textit{partial isomorphism} from the $B_{\mathbf{G}_m}(e_{G_m},R)$ to $B_{\mathbf{G}_{\infty}}(e_{G_{\infty}},R)$. Here, for non-empty subsets $A\subseteq \Gamma_1$ and $B\subseteq \Gamma_2$, a map $\phi\colon A\to B$ is called a \textit{partial homomorphism} if for every $\gamma,\gamma'\in A$ with $\gamma\gamma'\in A$, 
\[
\phi(\gamma\gamma')=\phi(\gamma)\phi(\gamma')
\]
holds true. A \textit{partial isomorphism} is defined to be a bijective partial homomorphism. 

If a convergent sequence $\mathbf{G}_m \stackrel{\mathrm{Cay}}{\to} \mathbf{G}_{\infty}$ is such that for every $m\in \mathbb{N}$, $\mathbf{G}_m$ is a finite marked group (namely, the underlying group $G_m$ of $\mathbf{G}_m$ is finite), then we call the sequence $(\mathbf{G}_m)_{m\in \mathbb{N}}$ a \textit{LEF approximation} of $\mathbf{G}_{\infty}$. 

Note that the RF and the LEF properties are, respectively, closed under taking finitely generated subgroups (these properties may be defined for countable groups and this closeness holds in that context, but in this paper we only need it among finitely generated subgroups). In fact, for the LEF property for finitely generated groups, the following is easily seen from the characterization above of Cayley convergence. In this paper, for $\ell\in \mathbb{N}_{\geq 1}$Cwe denote by $F_\ell$ the (non-abelian) free group of rank $\ell$.

\begin{lemma}\label{lemma=LEFsubgroups}
Let $((L_m;v_1^{(m)},\ldots ,v_{\ell}^{(m)}))_{m\in \mathbb{N}}$ be a convergent sequence to $(L_{\infty};v_1^{(\infty)},\ldots ,v_{\ell}^{(\infty)})$ in the Cayley topology. Let $k\in \mathbb{N}_{\geq 1}$ and let $\omega_1,\ldots, \omega_k$ be words in $F_{\ell}$. For every $m\in \mathbb{N}\cup\{\infty\}$, let $s_j^{(m)}=\omega_j(v_1^{(m)},\ldots ,v_{\ell}^{(m)})$ for every $j\in [k]$. Let $G_m$ be the subgroup of $L_m$ generated by $s_j^{(m)}$, $j\in [k]$. Then $((G_m;s_1^{(m)},\ldots ,s_k^{(m)}))_{m\in \mathbb{N}}$ converges to $(G_{\infty};s_1^{(\infty)},\ldots ,s_k^{(\infty)})$ in the Cayley topology.
\end{lemma}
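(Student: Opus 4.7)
The plan is to unpack the characterization of Cayley convergence from Subsection~\ref{subsection=TheSpaceOfMarkedGroups}: it suffices to prove that for every $R\in\mathbb{N}$ there exists $M_R\in\mathbb{N}$ such that for every $m\geq M_R$, the assignment $s_j^{(m)}\mapsto s_j^{(\infty)}$ extends to a partial isomorphism $B_{G_m}(e_{G_m},R)\to B_{G_{\infty}}(e_{G_{\infty}},R)$ between the corresponding balls in the Cayley diagrams of $(G_m;s_1^{(m)},\ldots,s_k^{(m)})$ and $(G_{\infty};s_1^{(\infty)},\ldots,s_k^{(\infty)})$. Let $L=\max_{j\in[k]}|\omega_j|_{F_\ell}$ denote the largest word length among the $\omega_j$'s.

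First I would transfer control from the subgroups $G_m$ to the ambient groups $L_m$. Since each $s_j^{(m)}$ is a word of length at most $L$ in $v_1^{(m)},\ldots,v_\ell^{(m)}$, every element of $B_{G_m}(e_{G_m},R)$ admits an expression as a word of length at most $RL$ in the $v_j^{(m)}$'s, and hence lies inside $B_{L_m}(e_{L_m},RL)$. By the hypothesized Cayley convergence of the $L_m$'s to $L_\infty$, there exists $M_R\in\mathbb{N}$ such that for every $m\geq M_R$ the map $v_j^{(m)}\mapsto v_j^{(\infty)}$ extends to a partial isomorphism $\Phi_m\colon B_{L_m}(e_{L_m},RL)\to B_{L_\infty}(e_{L_\infty},RL)$.

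The key step is then to verify that $\Phi_m$ restricts to the desired partial isomorphism on $B_{G_m}(e_{G_m},R)$. For any word $\sigma$ of length at most $R$ in the alphabet $\{s_1,\ldots,s_k\}^{\pm 1}$, its expansion in the alphabet $\{v_1,\ldots,v_\ell\}^{\pm 1}$ has length at most $RL$, and all of its partial products lie inside $B_{L_m}(e_{L_m},RL)$; an inductive use of the partial-homomorphism property of $\Phi_m$ along these partial products yields $\Phi_m(\sigma(s^{(m)}))=\sigma(s^{(\infty)})$. Consequently, the restriction $\phi$ of $\Phi_m$ to $B_{G_m}(e_{G_m},R)$ takes values in $B_{G_\infty}(e_{G_\infty},R)$ and sends each $s_j^{(m)}$ to $s_j^{(\infty)}$; bijectivity follows by the symmetric argument applied to $\Phi_m^{-1}$, and the partial-homomorphism property on triples $a,b,ab\in B_{G_m}(e_{G_m},R)\subseteq B_{L_m}(e_{L_m},RL)$ shows that $\phi$ itself is a partial homomorphism. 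The only real subtlety throughout, and hence the ``hard'' part of the argument, is the length bookkeeping: one must make sure that every partial product arising in the iterated applications of the partial-homomorphism property of $\Phi_m$ stays inside $B_{L_m}(e_{L_m},RL)$, which is guaranteed by the elementary bound $(\text{length in the } v\text{'s})\leq L\cdot(\text{length in the } s\text{'s})$.
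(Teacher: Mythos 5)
The paper states this lemma without proof, remarking only that it is ``easily seen'' from the partial-isomorphism characterization of Cayley convergence given in Subsection~\ref{subsection=TheSpaceOfMarkedGroups}; your proposal carries out precisely the argument the paper leaves to the reader. It is correct: the length bound $L$ transfers $R$-ball control on $(G_m;s_1^{(m)},\ldots,s_k^{(m)})$ to $RL$-ball control inside $(L_m;v_1^{(m)},\ldots,v_\ell^{(m)})$, the convergence hypothesis yields a partial isomorphism $\Phi_m$ between $RL$-balls sending $v_j^{(m)}\mapsto v_j^{(\infty)}$, and the partial-product induction (using that $\Phi_m$ fixes the root and hence respects inverses) correctly shows that $\Phi_m$ restricts to the required partial isomorphism between the $R$-balls of the $G_m$'s sending each $s_j^{(m)}$ to $s_j^{(\infty)}$.
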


In particular, if we a priori know that $((G_m;s_1^{(m)},\ldots ,s_k^{(m)}))_{m\in \mathbb{N}}$ converges to a marked group $(G;s_1,\ldots ,s_k)$ in the Cayley topology in the setting above, then $L_{\infty}$ contains an isomorphic copy of $G$. Throughout this paper, we will freely use Lemma~\ref{lemma=LEFsubgroups} without mentioning that.

\subsection{Diagonal products}\label{subsection=DiagonalProducts}
For a sequence $(\mathbf{L}_m)_{m\in \mathbb{N}}=((L_m; v_1^{(m)},\ldots ,v_{\ell}^{(m)}))_{m}$ in $\mathcal{G}({\ell})$, we can construct an ${\ell}$-marked group by formation of the \textit{diagonal product} as follows; see also Brieussel and Zheng \cite[Subsection~2.1]{BrieusselZheng}. It was called the $\bigotimes$-product in the paper of Kassabov and Pak \cite[Definition~4.1]{KassabovPak}. 

We define the \textit{diagonal product} of a (sub)sequence of $\ell$-marked groups as follows: Let $\mathbb{M}=\{m_1,m_2,\ldots\}$ is a non-empty (possibly finite) subset of $\mathbb{N}$. Let $K^{(\mathbb{M})}=\prod_{m\in \mathbb{M}}L_m$ and set for each $j\in [{\ell}]$,
\[
v_j^{(\mathbb{M})}=(v_j^{(m_1)},v_j^{(m_2)},\ldots ,v_j^{(m_i)},\ldots ,) \quad (\in K^{(\mathbb{M})}).
\]
Then, $\Delta_{m\in \mathbb{M}}(\mathbf{L}_m)$ is defined as the marked group
\[
\Delta_{m\in \mathbb{M}}(\mathbf{L}_m)=(\Lambda^{(\mathbb{M})};v_1^{(\mathbb{M})},v_2^{(\mathbb{M})},\ldots,v_{\ell}^{(\mathbb{M})}).
\]
Here $\Lambda^{(\mathbb{M})}$ is the group  generated by $(v_1^{(\mathbb{M})},\ldots,v_{\ell}^{(\mathbb{M})})$. If we take $\mathbb{M}=\mathbb{N}$, then in this paper, we simply write $K^{(\mathbb{M})}$, $\Lambda^{(\mathbb{M})}$ and  $v_1^{(\mathbb{M})},\ldots,v_{\ell}^{(\mathbb{M})}$, respectively, as $K$, $\Lambda$ and $v_1, \ldots, v_{\ell}$ for short.

Now suppose that $(\mathbf{L}_m)_{m\in \mathbb{N}}=((L_m; v_1^{(m)},\ldots ,v_{\ell}^{(m)}))_{m}$ is a convergent sequence to $\mathbf{L}_{\infty}=(L_{\infty}; v_1^{(\infty)},\ldots ,v_{\ell}^{(\infty)})$. Then the relationship between $\Lambda$ above and $L_{\infty}$ is explained in the following way: The direct sum $\bigoplus_{m\in \mathbb{N}}L_m \leqslant K$ is a normal subgroup of $K$. The map that sends $v_j$ to $v_j^{(\infty)}$ for each $j\in [{\ell}]$ induces the following short exact sequence
\[
1\ \longrightarrow \ \Lambda \cap \bigoplus_{m\in \mathbb{N}}L_m \ \longrightarrow \ \Lambda \ \longrightarrow \ L_{\infty}\ \longrightarrow \  1.
\]
See \cite[Lemma~4.6]{KassabovPak}. In particular, we have the following.

\begin{lemma}[Diagonal products and LFNF-lifts]; \cite{KassabovPak}\label{lemma=DiagonalProducts}
Let $(\mathbf{L}_m)_{m\in \mathbb{N}}$ be a LEF approximation of $\mathbf{L}_{\infty}=(L_{\infty}; v_1^{(\infty)},\ldots ,v_{\ell}^{(\infty)})$.  Then the following hold true:
\begin{enumerate}[$(1)$]
  \item The underlying group $\Lambda$ of $\Delta_{m\in \mathbb{N}}(\mathbf{L}_m)$ is a RF group.
  \item The group $\Lambda$ is a subgroup of $\prod_{m\in \mathbb{N}}L_m$.
  \item The group $\Lambda$ is an LFNF-lift of $L_{\infty}$; in particular, it is a locally-finite-lift of $L_{\infty}$.
\end{enumerate}
\end{lemma}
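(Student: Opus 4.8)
The statement to prove is Lemma~\ref{lemma=DiagonalProducts}. The plan is to establish the displayed short exact sequence
\[
1\ \longrightarrow \ \Lambda \cap \bigoplus_{m\in \mathbb{N}}L_m \ \longrightarrow \ \Lambda \ \longrightarrow \ L_{\infty}\ \longrightarrow \  1
\]
and then read off the three assertions from it, together with the concrete realization of $\Lambda$ inside the product. First I would fix the ambient maps: let $\pi_m\colon K=\prod_{m}L_m\to L_m$ be the coordinate projections, and let $\pi_{\infty}\colon K\to ?$ — here the point is that there is \emph{no} coordinate ``$\infty$'', so instead one uses the Cayley convergence $\mathbf{L}_m\stackrel{\mathrm{Cay}}{\to}\mathbf{L}_{\infty}$ to produce a homomorphism $\Lambda\twoheadrightarrow L_{\infty}$ sending each $v_j$ to $v_j^{(\infty)}$. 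This is the step that genuinely uses the hypothesis, and it is the main obstacle.

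Concretely, I would argue as follows. A priori the assignment $v_j\mapsto v_j^{(\infty)}$ extends to a homomorphism $F_{\ell}\to L_{\infty}$; I must show it factors through $\Lambda$, i.e. that every relation satisfied by $(v_1,\dots,v_{\ell})$ in $K$ is also satisfied by $(v_1^{(\infty)},\dots,v_{\ell}^{(\infty)})$ in $L_{\infty}$. Given a word $w\in F_{\ell}$ with $w(v_1,\dots,v_{\ell})=e_K$, evaluating in each coordinate gives $w(v_1^{(m)},\dots,v_{\ell}^{(m)})=e_{L_m}$ for all $m$. If $|w|=R$ (word length), then $w$ being trivial is an event visible in the rooted $R$-ball $B_{\mathbf{L}_m}(e_{L_m},R)$; by the characterization of Cayley convergence, for all $m\geq m_R$ this ball is isomorphic (as a rooted Cayley diagram) to $B_{\mathbf{L}_{\infty}}(e_{L_{\infty}},R)$, hence $w(v_1^{(\infty)},\dots,v_{\ell}^{(\infty)})=e_{L_{\infty}}$. (Equivalently: the map $v_j^{(m)}\mapsto v_j^{(\infty)}$ is a partial isomorphism on $R$-balls, so it carries the trivial-word relation across.) This gives a surjective homomorphism $\rho\colon \Lambda\twoheadrightarrow L_{\infty}$, $v_j\mapsto v_j^{(\infty)}$. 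Conversely, I would note $\ker\rho$ consists precisely of those $\gamma=(\gamma_m)_m\in\Lambda$ with $\rho(\gamma)=e_{L_{\infty}}$; again using that balls of radius $\geq$ the length of a representing word eventually match, one checks $\gamma_m=e_{L_m}$ for all large $m$, i.e. $\gamma\in\bigoplus_m L_m$. The reverse inclusion $\Lambda\cap\bigoplus_m L_m\subseteq\ker\rho$ is immediate since such a $\gamma$ is a finitely supported product and the matching of large balls kills it in the limit. This establishes the exact sequence; here I can cite \cite[Lemma~4.6]{KassabovPak} as the excerpt already does.

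Now the three assertions follow. For $(2)$: by construction $\Lambda^{(\mathbb{N})}=\langle v_1,\dots,v_{\ell}\rangle\leqslant K=\prod_m L_m$, which is exactly the claim. For $(1)$: each $L_m$ is finite, and a finite direct product of finite groups is finite, hence $\prod_{m\in\mathbb{N}}L_m$ is residually finite (the projections $\pi_{\mathbb{M}_n}\colon K\to\prod_{m\leq n}L_m$ onto finite quotients separate points); a subgroup of an RF group is RF, so $\Lambda$ is RF. For $(3)$: set $N=\Lambda\cap\bigoplus_m L_m$, so $\Lambda/N\cong L_{\infty}$ by the exact sequence. It remains to verify $N$ is locally fully normally finite \emph{as a subgroup of $\Lambda$}. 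Let $F\subseteq N$ be finite; since each element of $\bigoplus_m L_m$ is finitely supported and $F$ is finite, there is $M$ with $\mathrm{supp}(f)\subseteq\mathbb{M}_M$ for all $f\in F$, i.e. $F\subseteq\prod_{m\leq M}L_m\times\{e\}$. The normal closure $\llangle F\rrangle_{\Lambda}$ is generated by $\Lambda$-conjugates of $F$; but conjugation in $K$ is coordinatewise, so every such conjugate still lies in $\prod_{m\leq M}L_m\times\{e\}$, whence $\llangle F\rrangle_{\Lambda}\leqslant\prod_{m\leq M}L_m\times\{e\}$, a finite group. Thus $\llangle F\rrangle_{\Lambda}$ is finite, $N$ is locally fully normally finite, and $\Lambda$ is an LFNF-lift of $L_{\infty}$; local-full-normal-finiteness implies local finiteness (indeed $\langle F\rangle\leqslant\llangle F\rrangle_{\Lambda}$), so $\Lambda$ is also a locally-finite-lift of $L_{\infty}$. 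The only subtle point throughout is the first one — producing and controlling the quotient map $\rho$ purely from Cayley convergence — and everything else is bookkeeping with finitely supported elements in the product.
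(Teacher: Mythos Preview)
Your proposal is correct and follows exactly the route the paper indicates: establish the short exact sequence $1\to\Lambda\cap\bigoplus_m L_m\to\Lambda\to L_\infty\to 1$ (which the paper states just before the lemma, citing \cite[Lemma~4.6]{KassabovPak}) and then read off $(1)$--$(3)$ from the finiteness of the $L_m$ and the coordinatewise nature of conjugation in $K$. The paper gives no further proof beyond this sketch, so your filling-in of the details (the well-definedness of $\rho$ via Cayley convergence, the kernel computation, and the bound on the normal closure of a finite $F\subseteq N$) is entirely in line with what the text leaves implicit.
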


In general, $\Lambda$ as in this lemma is not identical to the Cayley limit $L_{\infty}$. See Subsection~\ref{subsection=Profinite} for a case where $L_{\infty}$ is \textit{isomorphically} lifted. 

We include the following lemma due to its own interest. We do not use it in this paper; in our construction, we can take a well-chosen LEF approximation of a finitely generated RF group $H$ in the proof of the last assertion of Theorem~\ref{mtheorem=MainTheorem}; compare with Subsection~\ref{subsection=Profinite}. This  lemma deals with an \textit{arbitrary} LEF approximation of $H$, and may be combined with Lemma~\ref{lemma=LEFsubgroups}. 

\begin{lemma}\label{lemma=FinitelyPresented}
Let $H$ be a $\mathrm{finitely}$ $\mathrm{presented}$ RF group and $(h_1^{(\infty)},\ldots,h_{\ell}^{(\infty)})$ be a marking of $H$. Then for every LEF approximation $(\mathbf{H})_{m\in \mathbb{N}}=((H_m;h_1^{(m)},\ldots,h_{\ell}^{(m)}))_{m}$ of the marked group $(H;h_1^{(\infty)},\ldots,h_{\ell}^{(\infty)})$, after deleting finitely many $\mathbf{H}_m$ from the LEF approximation if necessary, we have that $\Delta_{m\in \mathbb{N}}(\mathbf{H}_m)$ is isomorphic to $(H;h_1^{(\infty)},\ldots,h_{\ell}^{(\infty)})$ as a marked group. In particular, the underlying group $\tilde{H}$ of $\Delta_{m\in \mathbb{N}}(\mathbf{H}_m)$ is isomorphic to $H$.
\end{lemma}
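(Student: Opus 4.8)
The plan is to exploit that a finitely presented group is, up to a finite ball in the Cayley graph, "rigid": once a marked group agrees with $(H;h_1^{(\infty)},\dots,h_\ell^{(\infty)})$ on a large enough ball, every defining relator of $H$ is satisfied, hence there is a surjection $H \twoheadrightarrow H_m$ for all large $m$. Concretely, write $H = \langle x_1,\dots,x_\ell \mid r_1,\dots,r_q\rangle$ with the $x_i$ mapping to $h_i^{(\infty)}$, and let $R_0$ be larger than the length of every relator $r_j$. Since $(\mathbf{H}_m)_m \stackrel{\mathrm{Cay}}{\to} (H;h_1^{(\infty)},\dots)$, there is $m_0$ such that for $m\geq m_0$ the map $h_i^{(\infty)}\mapsto h_i^{(m)}$ is a partial isomorphism $B_{\mathbf{H}}(e_H,R_0)\to B_{\mathbf{H}_m}(e_{H_m},R_0)$; in particular each $r_j(h_1^{(m)},\dots,h_\ell^{(m)}) = e_{H_m}$ because $r_j$ evaluated in $H$ is trivial and lies in the relevant ball. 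So after deleting $\mathbf{H}_0,\dots,\mathbf{H}_{m_0-1}$ we may assume there is a marked-group quotient $\pi_m\colon (H;h_1^{(\infty)},\dots) \twoheadrightarrow (H_m;h_1^{(m)},\dots)$ for every $m\in\mathbb{N}$.

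Next I would assemble these quotients into a single map into the diagonal product. The tuple $(\pi_m)_{m\in\mathbb{N}}$ defines a homomorphism $\Pi\colon H \to \prod_{m\in\mathbb{N}}H_m$ sending $h_i^{(\infty)}$ to $(h_i^{(m)})_m = h_i^{(\mathbb{N})}$; its image is exactly the subgroup generated by the $h_i^{(\mathbb{N})}$, which is by definition the underlying group $\tilde H$ of $\Delta_{m\in\mathbb{N}}(\mathbf{H}_m)$. Thus $\Pi\colon H \twoheadrightarrow \tilde H$ is a marked-group quotient, and it only remains to show $\Pi$ is injective. Equivalently, I must show: if $g\in H$ is nontrivial, then $\pi_m(g)\neq e_{H_m}$ for some $m$. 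This is where I would use the RF hypothesis on $H$ together with the fact that $(\mathbf{H}_m)_m$ is a LEF approximation of $H$ — but actually it is cleaner to argue directly from Cayley convergence: given nontrivial $g\in H$, pick $R$ with $d_{\mathbf{H}}(g,e_H)\leq R$, choose $m$ large enough that the ball of radius $R$ in $\mathbf{H}_m$ is partially isomorphic (as a rooted diagram) to that in $\mathbf{H}$; then $\pi_m$ restricted to $B_{\mathbf{H}}(e_H,R)$ is a bijection onto $B_{\mathbf{H}_m}(e_{H_m},R)$ (surjective since $\pi_m$ is a quotient and balls go to balls, injective since the partial isomorphism from Cayley convergence agrees with $\pi_m$ on that ball), so $\pi_m(g)\neq e_{H_m}$. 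Hence $\Pi$ is injective, and being also surjective and marking-preserving, it is a marked-group isomorphism $(H;h_1^{(\infty)},\dots)\cong \Delta_{m\in\mathbb{N}}(\mathbf{H}_m)$, which gives the final claim $\tilde H \cong H$.

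The main obstacle, and the step I would be most careful about, is the injectivity argument: one must make sure that the abstract quotient map $\pi_m$ (produced from finite presentation) actually coincides, on the ball $B_{\mathbf{H}}(e_H,R)$, with the partial isomorphism supplied by Cayley convergence — a priori these are two different maps $B_{\mathbf{H}}(e_H,R)\to H_m$. They do agree because both send $h_i^{(\infty)}\mapsto h_i^{(m)}$ and both are (partial) homomorphisms, so they agree on any element expressible as a word of length $\leq R$ in the generators whose prefixes all stay in the ball — and every element of a ball of radius $R$ in a Cayley graph is such a word by taking a geodesic. Once this identification is in place the cardinality count $|B_{\mathbf{H}}(e_H,R)| = |B_{\mathbf{H}_m}(e_{H_m},R)|$ forces the restriction of $\pi_m$ to that ball to be injective, and letting $R\to\infty$ finishes it. I would also note explicitly that finite presentation is genuinely used (only) to get the quotients $\pi_m$; for merely finitely generated RF $H$ the conclusion can fail, which is consistent with the lemma's hypothesis and with the discussion preceding it in the paper.
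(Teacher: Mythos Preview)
Your proof is correct and follows essentially the same route as the paper's: use finite presentation plus Cayley convergence to obtain, after deleting finitely many terms, a marked-group surjection $H\twoheadrightarrow \tilde H$, and then show this map is an isomorphism. The only difference is in how you establish injectivity. The paper simply invokes the general fact (stated just before Lemma~\ref{lemma=DiagonalProducts}) that the map $h_j\mapsto h_j^{(\infty)}$ always extends to a surjection $\tilde H\twoheadrightarrow H$; since both maps respect the markings, their composites are the identity on generators, hence on the whole group, so each is an isomorphism. You instead argue injectivity directly, by showing that for each nontrivial $g\in H$ some coordinate $\pi_m(g)$ is nontrivial, and you carefully verify that the quotient $\pi_m$ agrees with the partial isomorphism on the relevant ball. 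This is correct but does more work than needed: the reverse surjection $\tilde H\twoheadrightarrow H$ is already available from the diagonal-product machinery, and using it avoids the delicate identification of two a~priori different maps on balls that you flag as the main obstacle.
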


\begin{proof}
Since $H$ is finitely presented, there exists $R\in \mathbb{N}$ such that all relations in terms of $(h_1^{(\infty)},\ldots,h_{\ell}^{(\infty)})$ that suffice to define $H$ is contained in $B_{(H;h_1^{(\infty)},\ldots,h_{\ell}^{(\infty)})}(e_{H};R)$. Fix such an $R$. Then by definition of convergence in the Cayley topology, there exists $m_R$ such that for every $m\in \mathbb{N}_{\geq m_R}$, the ball in the Cayley diagram $B_{\mathrm{CayD}(H_m;h_1^{(m)},\ldots,h_{\ell}^{(m)})}(e_{H_m};R)$ is isomorphic to $B_{\mathrm{CayD}(H_{\infty};h_1^{(\infty)},\ldots,h_{\ell}^{(\infty)})}(e_{H_{\infty}};R)$ as rooted diagrams. If $m_R>0$, then delete all $\mathbf{H}_m$ for $m\in \mathbb{N}_{<m_R}$. Then for $\Delta_{m}(\mathbf{H}_m)=(\tilde{H};h_1,\ldots ,h_{\ell})$, the ${\ell}$-tuple $(h_1,\ldots ,h_{\ell})$ satisfies all relations that suffice to define $H$. This means that the map $h_j^{(\infty)}\mapsto h_j$ induces a homomorphism $H\twoheadrightarrow \tilde{H}$. On the other hand, by construction of diagonal products, the map $\colon h_j\mapsto h_j^{(\infty)}$ for $j\in [{\ell}]$ extends to a $\tilde{H}\twoheadrightarrow H$; these give an isomorphism $H\simeq \tilde{H}$.
\end{proof}

The proof above, in particular, implies that for finitely presented groups, the LEF property is equivalent to the RF property. It was proved in \cite[2. Theorem]{VershikGordon}.

\subsection{A sufficient condition for density: a byproduct of the Goursat lemma}\label{subsection=Goursat}
As we argued in Subsection~\ref{subsection=DiagonalProducts}, for a given LEF approximation $(\mathbf{L}_m)_{m\in \mathbb{N}}=((L_m;V_m))_{m}$ of $\mathbf{L}_{\infty}=(L_{\infty};V_{\infty})$, we constructed  a finitely generated RF subgroup $\Lambda$ of $K=\prod_{m\in \mathbb{N}}L_m$ as the underlying group of the diagonal product. This $\Lambda$ is \textit{not} necessarily dense in $K$. However, Lemma~\ref{lemma=Goursat}, a  byproduct of Goursat's lemma, provides a sufficient condition of density, as we will see in Lemma~\ref{lemma=Goursat}. Lemma~\ref{lemma=Goursat} might be known to the experts, but we include the proof for the sake of completeness.

The following statement is the key to density; it may be seen a corollary to Goursat's lemma in group theory. Here, we do not regard the trivial group $\{e\}$ as a finite simple group.

\begin{lemma}[Corollary to Goursat's lemma]\label{lemma=GoursatLemma}
Let $H_0$ and $H_1$ be two finite groups. For $i\in \{0,1\}$, let $\pi_i\colon H_0\times H_1\twoheadrightarrow H_i$ denote the projection onto the $i$-th coordinate. Let $G\leqslant H_0\times H_1$ satisfy $\pi_0(G)=H_0$ and $\pi_1(G)=H_1$. Assume that there does $\mathrm{not}$ exist any finite simple group that appears as a group quotient of both $H_0$ and $H_1$. Then, $G(\leqslant H_0\times H_1)$ equals the whole group $H_0\times H_1$.
\end{lemma}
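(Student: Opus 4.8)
The plan is to prove the statement by reducing to the case where $G$ is a \emph{maximal} proper subgroup of $H_0\times H_1$ surjecting onto both factors, and then deriving a contradiction via Goursat's lemma. So first I would argue as follows: suppose for contradiction that $G\neq H_0\times H_1$. Since $H_0\times H_1$ is finite, we may enlarge $G$ to a subgroup $M$ that is \emph{maximal} among proper subgroups of $H_0\times H_1$; note $M$ still satisfies $\pi_0(M)=H_0$ and $\pi_1(M)=H_1$ because $M\geqslant G$. So it suffices to derive a contradiction for such an $M$.

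Now I would invoke Goursat's lemma in the form: a subgroup $M\leqslant H_0\times H_1$ with $\pi_0(M)=H_0$, $\pi_1(M)=H_1$ corresponds to an isomorphism $\theta\colon H_0/N_0 \stackrel{\simeq}{\to} H_1/N_1$, where $N_i=\{h\in H_i : (h,e)\in M \text{ or } (e,h)\in M\}$ (the appropriate ``partial kernel'' on each side), so that $M = \{(h_0,h_1): \theta(h_0 N_0)=h_1 N_1\}$. The common quotient group $Q:=H_0/N_0 \cong H_1/N_1$ is nontrivial precisely because $M$ is a \emph{proper} subgroup; indeed $|M| = |H_0||H_1|/|Q|$, so $M=H_0\times H_1$ iff $|Q|=1$. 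Thus $Q$ is a nontrivial finite group that is simultaneously a quotient of $H_0$ (via $H_0\twoheadrightarrow H_0/N_0$) and of $H_1$.

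The next step is to pass from a common nontrivial \emph{quotient} $Q$ to a common \emph{simple} quotient: take any maximal normal subgroup of $Q$, giving a nontrivial finite simple quotient $S$ of $Q$; composing $H_i \twoheadrightarrow Q \twoheadrightarrow S$ exhibits $S$ as a group quotient of both $H_0$ and $H_1$. This contradicts the hypothesis that no finite simple group appears as a quotient of both $H_0$ and $H_1$. (Here one uses the convention, stated just before the lemma, that the trivial group is not counted as a finite simple group, so ``$S$ nontrivial simple'' is exactly what the hypothesis forbids.) Hence no such proper $M$ exists, so $G=H_0\times H_1$.

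I expect the main (really the only) subtlety to be stating and applying Goursat's lemma cleanly enough that the equivalence ``$M$ proper $\iff$ the common quotient $Q$ is nontrivial'' is transparent; everything else — the maximality reduction and the extraction of a simple quotient from $Q$ — is routine. One could alternatively avoid the maximality reduction by applying Goursat directly to $G$ itself, getting a common quotient $Q = H_0/N_0$ of $H_0$ and $H_1$ with $|Q|>1$ whenever $G$ is proper; I would probably present it that way to keep the argument shortest, since Goursat's correspondence already handles an arbitrary subgroup with full projections and there is no need to pass to a maximal one.
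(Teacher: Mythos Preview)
Your proposal is correct and follows essentially the same route as the paper: apply Goursat's lemma directly to $G$ to obtain a common quotient $Q\cong H_0/N_0\cong H_1/N_1$, and observe that $Q$ must be trivial under the hypothesis, forcing $G=H_0\times H_1$. Your explicit step of passing from a nontrivial $Q$ to a simple quotient $S$ of $Q$ (hence of both $H_i$) spells out what the paper leaves implicit, and as you yourself note at the end, the initial maximality reduction is unnecessary---the paper omits it too.
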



\begin{proof}
Let $N_0=\{h_0\in H_0:(h_0,e_{H_1})\in G\}$ and $N_1=\{h_1\in H_0:(e_{H_0},h_1)\in G\}$. By assumption on $G$, it is straightfoward to show that for each $i\in \{0,1\}$, $N_i$ is normal in $H_i$. We then claim the isomorphism
\[
H_0/N_0\simeq H_1/N_1.
\]
To prove this, consider the following group homomorphism
\[
\rho\colon G\to H_0/N_0\times H_1/N_1;\quad (g_0,g_1)\mapsto (g_0N_0,g_1N_1).
\]
Note that $\rho$ is surjective onto each coordinate. We observe that for $g=(g_0,g_1)$ and $g'=(g_0',g_1')$ in $G$, if the $0$-th coordinate of $\rho(g)$ coincides with that of $\rho(g')$, then $\rho(g)=\rho(g')$. Indeed, the assumption above tells that $g_0^{-1}g_0'\in N_0$; then 
\[
(e_{H_0},g_1^{-1}g_1)=(g_0^{-1}g_0',e_{H_1})^{-1}(g_0^{-1},g_1^{-1})(g_0',g_1')\in G
\]
and hence $g_1^{-1}g_1'\in N_1$. By switching $H_0$ and $H_1$, we conclude that $\rho$ induces the isomorphism $H_0/N_0\simeq H_1/N_1$ (this is exactly the statement of the Goursat lemma).

Now we utilize the assumption on finite simple quotients of $H_0$ and $H_1$. The only possibility for $H_0/N_0\simeq H_1/N_1$ to happen is when the common group quotient in the equality above is the trivial group. Hence $N_0=H_0$ and $N_1=H_0$; they imply that $G=H_0\times H_1$.
\end{proof}

The following lemma roughly states that if $L_m$, $m\in \mathbb{N}$, are pairwise ``\textit{coprime}'' in terms of finite simple groups (quotients), then density is automatic. Recall also that every finite group admits a composition series, and  that a set of the simple groups appearing in this series (it is called the set of \textit{composition factors}) does not depend on the way how we take a composition series; this is the Jordan--H\"{o}lder theorem.

\begin{lemma}[Sufficient condition for density]\label{lemma=Goursat}
Let $(\mathbf{L}_m)_{m\in \mathbb{N}}=((L_m; v_1^{(m)},\ldots ,v_{\ell}^{(m)}))_{m}$ be a sequence of finite marked groups. Assume that there exists $\mathrm{no}$ finite simple group that appears as a simple group quotient of $L_m$ for two distinct $m$. Then the underlying group $\Lambda$ of $\Delta_{m\in \mathbb{N}}(\mathbf{L}_m)$ above is dense in $K=\prod_{m\in \mathbb{N}}L_m$.

In particular, if $\mathrm{no}$ finite simple group appears as a composition factor of $L_m$ for two distinct $m$, then $\Lambda$ above is dense in $K$ above.
\end{lemma}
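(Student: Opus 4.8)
The plan is to reduce the density statement to a Baire-category-style "onto each finite stage" criterion and then invoke Lemma~\ref{lemma=GoursatLemma} inductively. Recall that a subgroup $\Lambda$ of $K=\prod_{m\in\mathbb{N}}L_m$ with the product topology is dense if and only if for every $n\in\mathbb{N}$ the image of $\Lambda$ under the projection $p_n\colon K\twoheadrightarrow \prod_{m\in\mathbb{M}_n}L_m$ is the whole finite group $\prod_{m\in\mathbb{M}_n}L_m$; this is just the definition of the product topology (a basic open set fixes finitely many coordinates) together with the fact that $\Lambda$ already surjects onto each single coordinate $L_m$ by construction of the diagonal product (the $v_j^{(\mathbb{M})}$ have $m$-th coordinate $v_j^{(m)}$, and $(v_j^{(m)})_j$ generates $L_m$).

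First I would fix $n$ and prove $p_n(\Lambda)=\prod_{m\in\mathbb{M}_n}L_m$ by induction on $n$. The base case $n=0$ is the surjection onto $L_0$ noted above. For the inductive step, write $H_0=\prod_{m\in\mathbb{M}_{n-1}}L_m$ and $H_1=L_n$, and let $G=p_n(\Lambda)\leqslant H_0\times H_1$. By the inductive hypothesis, the composition of $p_n|_\Lambda$ with the projection $H_0\times H_1\twoheadrightarrow H_0$ is onto, i.e. $\pi_0(G)=H_0$; similarly $\pi_1(G)=L_n=H_1$. So the only hypothesis of Lemma~\ref{lemma=GoursatLemma} left to check is that no finite simple group is a common quotient of $H_0$ and $H_1=L_n$. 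A finite simple quotient of $H_0=\prod_{m\in\mathbb{M}_{n-1}}L_m$ is a simple quotient of one of the factors $L_m$, $m<n$: indeed, if $Q$ is simple and $H_0\twoheadrightarrow Q$, the kernel must contain all but one factor $L_m$ (since the factors commute and a simple group has no proper nontrivial normal subgroup that could absorb two commuting complementary pieces — more concretely, the kernel, being normal, is a product of normal subgroups of the factors, and for the quotient to be simple all but one coordinate must be the full factor), so $Q$ is a simple quotient of some $L_m$ with $m<n$. By the hypothesis of the lemma, no simple group is a simple quotient of both $L_m$ ($m<n$) and $L_n$, so no simple group is a common quotient of $H_0$ and $H_1$. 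Lemma~\ref{lemma=GoursatLemma} then gives $G=H_0\times H_1=\prod_{m\in\mathbb{M}_n}L_m$, completing the induction and hence the density claim.

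For the "in particular" clause, I would observe that a simple group quotient of a finite group $L$ is in particular a composition factor of $L$ (extend a composition series through the kernel of $L\twoheadrightarrow Q$), so if no simple group is a composition factor of two distinct $L_m$, then a fortiori no simple group is a simple quotient of two distinct $L_m$, and the first part applies.

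The only genuinely delicate point is the elementary group-theoretic fact that the finite simple quotients of a finite direct product are exactly the finite simple quotients of its factors; everything else is bookkeeping with the product topology and a clean invocation of Lemma~\ref{lemma=GoursatLemma}. I expect the main obstacle, if any, to be stating that fact at the right level of rigor without belaboring it — it follows from the observation that a normal subgroup of a finite direct product of groups need not itself be a product of normal subgroups of the factors in general, but its image considerations for \emph{simple} quotients do force all-but-one coordinate to be surjected onto, which is precisely what the Goursat-type lemma already encapsulates; so in fact one can avoid proving the general fact and instead apply Lemma~\ref{lemma=GoursatLemma} repeatedly, peeling off one coordinate at a time as in the induction above.
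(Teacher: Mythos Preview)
Your approach is essentially identical to the paper's: reduce density in the product topology to surjectivity of $\Lambda$ onto each finite partial product $\prod_{m\in\mathbb{M}_n}L_m$, and then induct on $n$ using Lemma~\ref{lemma=GoursatLemma}, peeling off one coordinate at a time. The paper's proof is the same two-line sketch, and your write-up simply fills in the details the paper leaves implicit.

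One small slip worth flagging: the parenthetical ``the kernel, being normal, is a product of normal subgroups of the factors'' is false in general for normal subgroups of direct products. Your first justification (the images of the factors in a simple quotient $Q$ are commuting normal subgroups of $Q$, hence each is trivial or all of $Q$, and if two were all of $Q$ then $Q$ would be abelian, in which case $Q$ is already a quotient of each surjecting factor) is the correct one and suffices; you can drop the ``more concretely'' clause entirely.
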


\begin{proof}
Desired density is equivalent to saying that for every $n\in \mathbb{N}$, the projection onto coordinates from $0$-th to $n$-th, $\pi^{(\mathbb{M}_n)}\colon K\twoheadrightarrow \prod_{m\in \mathbb{M}_{n}}L_m$, gives a surjection from $\Lambda$. This equivalent form immediately follows from Lemma~\ref{lemma=GoursatLemma} by induction on $n$. 
\end{proof}

The latter (weaker) statement of Lemma~\ref{lemma=Goursat} is almost straightforward by the Jordan--H\"{o}lder theorem; see the argument in \cite[Example~4.1]{LubotzkyWeiss}.

\subsection{Relation to profinite completions}\label{subsection=Profinite}
We explain the relation between diagonal products of LEF approximations and profinite completions of RF groups. 

Let $L_{\infty}$ be a finitely generated RF group. The RF property is equivalent to existence of a \textit{nested} sequence $(N_m)_{m\in \mathbb{N}}$  (that means, for every $m\in \mathbb{N}$, it holds that $N_{m+1}\leqslant N_m$) of finite index normal subgroups of $L_{\infty}$ such that $\bigcap_{m\in \mathbb{N}}N_m=\{e_{L_{\infty}}\}$. In this paper, we call such a sequence a \textit{chain} of normal subgroups of $L_{\infty}$. For such a chain $(N_m)_{m\in \mathbb{N}}$, we can construct a projective system 
\[
L_{\infty}\twoheadrightarrow \cdots \twoheadrightarrow L_{\infty}/N_{m+1} \twoheadrightarrow L_{\infty}/N_{m} \twoheadrightarrow  \cdots \twoheadrightarrow L_{\infty}/N_{0},
\]
and the \textit{profinite completion} $\widehat{L_{\infty}}_{(N_m)_{m}}=\varprojlim_{m} (L_{\infty}/N_m)$ of $L_{\infty}$ with respect to the chain $(N_m)_{m\in \mathbb{N}}$. Let $(v_1^{(\infty)},\ldots ,v_{\ell}^{(\infty)})$ be a marking of $L_{\infty}$. Then, since $(N_m)_{m\in \mathbb{N}}$ is nested and $\bigcap_{m\in \mathbb{N}}N_m=\{e_{L_{\infty}}\}$, $((L_{\infty}/N_m; v_1^{(\infty)}\ \mathrm{mod}\ N_m,\ldots ,v_{\ell}^{(\infty)}\ \mathrm{mod}\ N_m))_{m\in \mathbb{N}}$ is a LEF approximation of  $\mathbf{L_{\infty}}=(L_{\infty};v_1^{(\infty)},\ldots ,v_{\ell}^{(\infty)}))$. Moreover, because $(L_{\infty}/N_{m})_{m\in \mathbb{N}}$ forms a projective system, it holds that for this chain $(N_m)_{m\in \mathbb{N}}$,
\begin{eqnarray*}
& &\Delta_{m\in \mathbb{M}_n}((L_{\infty}/N_m; v_1^{(\infty)}\ \mathrm{mod}\ N_m,\ldots ,v_{\ell}^{(\infty)}\ \mathrm{mod}\ N_m)) \quad  \\
\cong \quad& &(L_{\infty}/N_n; v_1^{(\infty)}\ \mathrm{mod}\ N_n,\ldots ,v_{\ell}^{(\infty)}\ \mathrm{mod}\ N_n),
\end{eqnarray*}
for every $n\in \mathbb{N}$. It implies that 
\[
\Delta_{n\in \mathbb{N}}((L_{\infty}/N_n; v_1^{(\infty)}\ \mathrm{mod}\ N_n,\ldots ,v_{\ell}^{(\infty)}\ \mathrm{mod}\ N_n))\  \cong \ \mathbf{L}_{\infty}.
\]

Now we go back to the setting of a LEF approximation of a general LEF group. For $(\Lambda;v_1,\ldots ,v_{\ell})=\Delta_{m\in \mathbb{N}}(\mathbf{L}_m)$ for a (general) LEF approximation $(\mathbf{L}_m)_{m\in \mathbb{N}}$, we set $(\Lambda^{(\mathbb{M}_n)};v_1^{(\mathbb{M}_n)},\ldots ,v_{\ell}^{(\mathbb{M}_n)})=\Delta_{m\in \mathbb{M}_n}(\mathbf{L}_m)$ (this is again a finite marked group). Then $(\Lambda^{(\mathbb{M}_n)})_{n\in \mathbb{N}}$ forms a projective system of the RF group $\Lambda$. Therefore, we have the marked group isomorphism
\[
\Delta_{n\in \mathbb{N}}(\Lambda^{(\mathbb{M}_n)};v_1^{(\mathbb{M}_n)},\ldots ,v_{\ell}^{(\mathbb{M}_n)})) \  \cong \ 
(\Lambda;v_1,\ldots ,v_{\ell}).
\]
Moreover, the following holds true: If $(L_m)_{m\in \mathbb{N}}$ satisfies the condition of (the former statement of) Lemma~\ref{lemma=Goursat}, then the profinite completion $\widehat{\Lambda}_{(N_n)_{n}}(=\varprojlim_{n}\Lambda^{(\mathbb{M}_n)})$ with respect to the chain $(N_n)_{n\in \mathbb{N}}=(\mathrm{Ker}(\Lambda\twoheadrightarrow \Lambda^{(\mathbb{M}_n)}))_{n}$ is naturally isomorphic to $\prod_{m\in \mathbb{N}}L_m$. Indeed, in this case, $\Lambda^{(\mathbb{M}_n)}=\prod_{m\in \mathbb{M}_n}L_m$ by Lemma~\ref{lemma=Goursat}. Hence, in this case, the action $\Lambda \curvearrowright K=\prod_{m\in \mathbb{N}}L_m$ may be regarded as the profinite action
\[
\varprojlim_{n}(\Lambda\curvearrowright \Lambda^{(\mathbb{M}_n)})\quad (=\varprojlim_{n}(\Lambda\curvearrowright \prod_{m\in \mathbb{M}_n}L_m));
\]
see \cite{AbertElek} for more details on profinite actions.

\subsection{Standard (restricted) wreath products and the \textit{local} point of view}\label{subsection=Local}
For two (discrete) groups $G$ and $H$, the \textit{standard}  (\textit{restricted}) \textit{wreath product} $G\wr H$ is defined as $(\bigoplus_H G)\rtimes H$, where the $H$-action is given by the permutation of coordinates from the right. We identify an element in $\bigoplus_H G$ with a map $f\colon H\to G$ such that for all but finite $h\in H$, $f(h)=e_G$. In this way, we write an element in $G\wr H$ as $(f,h)$, where $f\in \bigoplus_H G$ and $h\in H$. We write the map $f\colon H\to G$ that sends all $h$ to $e_G$ as $\mathbf{e}$. This is the group unit of $\bigoplus_H G$. For $g\in G$ and $h\in H$, we denote by $g\delta_{h}$ the element 
\[
g\delta_{h}(\gamma)=\left\{\begin{array}{cl}
g, & \textrm{if $\gamma=h$},\\
e_G, & \textrm{otherwise}
\end{array}\right.
\]
in $\bigoplus_{H} G$.

As we mentioned in Section~\ref{section=Organization}, the LEF property for finitely generated groups is closed under taking standard (restricted) wreath products. More precisely, we have the following lemma. 

\begin{lemma}[\cite{VershikGordon}]\label{lemma=WreathProducts}
Let $\mathbf{G}_m\stackrel{\mathrm{Cay}}{\to} \mathbf{G}_{\infty}$ in $\mathcal{G}(k)$ and $\mathbf{H}_n\stackrel{\mathrm{Cay}}{\to} \mathbf{H}_{\infty}$ in $\mathcal{G}(l)$. Then as $\min\{m,n\}\to \infty$, we have the following convergence in $\mathcal{G}(k+l)$:
\[
\mathbf{G}_m\wr \mathbf{H}_{n}\ \stackrel{\mathrm{Cay}}{\longrightarrow} \ \mathbf{G}_{\infty}\wr \mathbf{H}_{\infty}.
\]
Here for two marked groups $\mathbf{G}=(G;s_1,\ldots ,s_k)$ and $\mathbf{H}=(H;t_1,\ldots ,t_{\ell})$, $\mathbf{G}\wr \mathbf{H}$ is defined to be a marked group with underlying group $G\wr H$ whose marking is given by
\[
((s_1\delta_{e_H},e_H),(s_2\delta_{e_H},e_H),\ldots ,(s_k\delta_{e_H},e_H),(\mathbf{e},t_1),(\mathbf{e},t_2),\ldots ,(\mathbf{e},t_{\ell})).
\]
\end{lemma}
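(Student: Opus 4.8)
The plan is to verify the defining condition of Cayley convergence directly, via the local (rooted-ball) characterization recalled in Subsection~\ref{subsection=TheSpaceOfMarkedGroups}. It suffices to show that for every $R\in\mathbb{N}$ there is $N_R\in\mathbb{N}$ such that whenever $\min\{m,n\}\geq N_R$, the rooted edge-colored, edge-oriented diagram $B_{\mathrm{CayD}(\mathbf{G}_m\wr\mathbf{H}_n)}(e,R)$ is isomorphic to $B_{\mathrm{CayD}(\mathbf{G}_\infty\wr\mathbf{H}_\infty)}(e,R)$, where $e$ denotes the relevant unit. The whole statement then reduces to the following \emph{locality claim}: there is a function $\rho$ with $\rho(R)\geq R$ for all $R$ (one checks $\rho(R)=R+1$ works) and, for each $k,\ell,R$, a rule --- not depending on $\mathbf{G}$ or $\mathbf{H}$, and compatible with isomorphisms of the input data --- producing the rooted colored diagram $B_{\mathrm{CayD}(\mathbf{G}\wr\mathbf{H})}(e,R)$ out of the pair of rooted colored diagrams $B_{\mathrm{CayD}(\mathbf{G})}(e_G,\rho(R))$ and $B_{\mathrm{CayD}(\mathbf{H})}(e_H,\rho(R))$. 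Granting the claim, the lemma is immediate: by hypothesis there are $m_R$ and $n_R$ such that the $\rho(R)$-balls of $\mathbf{G}_m$ and of $\mathbf{H}_n$ have stabilized (i.e.\ agree with those of $\mathbf{G}_\infty$, $\mathbf{H}_\infty$) once $m\geq m_R$ and $n\geq n_R$; set $N_R=\max\{m_R,n_R\}$ and apply the claim with $(\mathbf{G}_m,\mathbf{H}_n)$ and with $(\mathbf{G}_\infty,\mathbf{H}_\infty)$.

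To prove the claim I would first pin down which elements of $G\wr H$ have word length $\leq R$ with respect to the marking $(s_1\delta_{e_H},e_H),\ldots,(s_k\delta_{e_H},e_H),(\mathbf{e},t_1),\ldots,(\mathbf{e},t_\ell)$. Reading a word $w=x_1\cdots x_n$ with $n\leq R$ from the left, and using that the $H$-coordinate of a product is multiplied on the right by $t_j^{\pm1}$ at each $t$-letter and left unchanged at each $G$-letter, one sees that $w$ evaluates to $(f,h)$ where: $h$ is the value of the subword of $t$-letters of $w$, so $h\in B_{\mathbf{H}}(e_H,R)$; the support of $f$ is contained in the set of values (or inverse-values, according to the fixed side-convention for the $H$-action on $\bigoplus_H G$) of the $t$-subwords of the proper prefixes of $w$, hence contained in $B_{\mathbf{H}}(e_H,R)$; and each $f(\gamma)$ is a product of at most $R$ of the $s_i^{\pm1}$, hence lies in $B_{\mathbf{G}}(e_G,R)$. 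Thus the vertex set of $B_{\mathrm{CayD}(\mathbf{G}\wr\mathbf{H})}(e,R)$ injects into the finite set of pairs consisting of a finitely supported map from the vertices of $B_{\mathbf{H}}(e_H,R)$ to the vertices of $B_{\mathbf{G}}(e_G,R)$ together with a distinguished vertex of $B_{\mathbf{H}}(e_H,R)$ --- a set manufactured from the two balls alone.

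The rest is bookkeeping. For a word $w$ of length $\leq R$ (the set $W_R$ of such words over $k+\ell$ symbols depends only on $k,\ell,R$) one computes its formal value --- the pair just described --- entirely inside the two rooted balls: walking in $B_{\mathbf{H}}$ reads off the distinguished vertex and the vertices at which the $G$-letters deposit their $s_i^{\pm1}$, and then walking in $B_{\mathbf{G}}$ evaluates, coordinate by coordinate, the ordered $G$-words thereby deposited. Since the vertices of $B_{\mathbf{G}}(e_G,R)$ (resp.\ $B_{\mathbf{H}}(e_H,R)$) are exactly the elements of $G$ (resp.\ $H$) of length $\leq R$, two words of length $\leq R$ have equal formal value if and only if they represent the same element of $G\wr H$; this identifies the vertex set of the ball with a quotient of $W_R$ read off from the two balls. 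Finally, to recover the color-$c$ edges one decides, for each such $w$ and each generator $g_c$, whether $g_c\cdot w$ still has length $\leq R$ and, if so, which vertex it is: writing $g_c w$ as a word of length $\leq R+1$ and evaluating it in the $(R+1)$-balls settles both points --- this is the sole reason for inflating the radius to $\rho(R)=R+1$. Assembling vertices and (colored, oriented) edges exhibits $B_{\mathrm{CayD}(\mathbf{G}\wr\mathbf{H})}(e,R)$ as the promised function of the two balls.

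The only genuine subtleties I foresee are clerical: keeping the word-length estimates honest (in particular making sure the inflated radius is large enough that edges leaving the $R$-ball of the wreath product are detected), and fixing once and for all the side-convention for the $H$-action on $\bigoplus_H G$ so that ``the coordinate at which the $a$-th letter of $w$ deposits'' is an unambiguous vertex of $B_{\mathbf{H}}$. There is no conceptual obstacle: the content is exactly that a finite ball of a standard restricted wreath product is a universal function of finite balls of the two factors, which is precisely why local convergence --- and hence the LEF property --- is preserved under $\wr$ (cf.\ \cite{VershikGordon}).
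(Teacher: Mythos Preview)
Your argument is correct and is essentially the paper's own proof: both rest on the single observation that the $R$-ball of $\mathbf{G}\wr\mathbf{H}$ sits inside the set of pairs $(f,h)$ with $h\in B_{\mathbf{H}}(e_H,R)$, $\mathrm{supp}(f)\subseteq B_{\mathbf{H}}(e_H,R)$, and $f(\gamma)\in B_{\mathbf{G}}(e_G,R)$, so that the $R$-ball of the wreath product is determined by the $R$-balls of the factors via the obvious partial isomorphism. Your more explicit ``locality claim'' formulation and the cautious bump to radius $\rho(R)=R+1$ for edge-detection are cosmetic; the paper works directly with radius $R$ and phrases everything in terms of the induced partial isomorphism, but the content is identical.
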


Although this lemma is a special case of \cite[${\S}$2.4. Theorem.$ii)$]{VershikGordon}, we include the proof of the lemma above because the proof describes some intuition of convergences in the Cayley topology; we suggest the reader consult it together with Remark~\ref{remark=Gruenberg}. As we mentioned in Subsection~\ref{subsection=TheSpaceOfMarkedGroups}, the Cayley convergence $\mathbf{G}_m\stackrel{\mathrm{Cay}}{\to} \mathbf{G}_{\infty}$ in $\mathcal{G}(k)$ is the \textit{local} convergence of rooted diagrams; more precisely, for each $R\in \mathbb{R}_{\geq 0}$, there exists $m_R\in \mathbb{N}$ such that for every $m\in \mathbb{N}_{\geq m_R}$, the $R$-ball centered at $e_{G_m}$ in the Cayley diagram $\mathrm{CayD}(\mathbf{G}_m)$ are isomorphic to the $R$-ball centered at $e_{G_{\infty}}$ in $\mathrm{CayD}(\mathbf{G}_{\infty})$. This amounts to the existence of a certain \textit{partial isomorphism} ; recall Subsection~\ref{subsection=TheSpaceOfMarkedGroups}. In this convergence, we \textit{disregard} behaviors of $\mathbf{G}_m$ \textit{outside the $R$-ball}, where $m\geq m_R$; this description will get clearer after the reader consults Remark~\ref{remark=Gruenberg}. We refer the reader to \cite[Lemma~5.1]{MimuraSakoPartI} and the proof of it as another pedagogical example.

\begin{proof}[Proof of Lemma~$\ref{lemma=WreathProducts}$]
Let $R\in \mathbb{R}_{\geq 0}$. We take $m_{R} \in \mathbb{N}$ and $n_{R} \in \mathbb{N}$, respectively, to be integers corresponding, respectively, to the Cayley convergences $\mathbf{G}_m\stackrel{\mathrm{Cay}}{\to} \mathbf{G}_{\infty}$ and $\mathbf{H}_n\stackrel{\mathrm{Cay}}{\to} \mathbf{H}_{\infty}$. In what follows, that for $m\geq m_R$ and $n\geq n_R$, the $R$-ball centered at $e_{G_m\wr H_n}$ of $\mathrm{CayD}(\mathbf{G}_m\wr \mathbf{H}_n)$ is isomorphic to the $R$-ball centered at $e_{G_{\infty}\wr H_{\infty}}$ of $\mathrm{CayD}(\mathbf{G}_{\infty}\wr \mathbf{H}_{\infty})$; once we show this, it is by definition to obtain the desired Cayley convergence.

Fix $m\in \mathbb{N}_{\geq m_{R}}$ and $n\in \mathbb{N}_{\geq n_{R}}$. By assumption, the map sending $s_j^{(m)}$ to $s_j^{(\infty)}$ for each $j\in [k]$ extends to a partial isomorphism between $B_{\mathbf{G}_m}(e_{G_m},R)$ and $B_{\mathbf{G}_{\infty}}(e_{G_{\infty}},R)$; similarly, the map sending $t_i^{(m)}$ to $t_i^{(\infty)}$ for each $i\in [l]$ extends to a partial isomorphism between $B_{\mathbf{H}_n}(e_{H_n},R)$ and $B_{\mathbf{H}_{\infty}}(e_{H_{\infty}},R)$. The key to the proof is that the $R$-ball $B_{\mathbf{G}_{m}\wr \mathbf{H}_{n}}(e_{G_{m}\wr H_{n}},R)$ is included in the following subset of $G_m\wr H_n$:
\[
\left\{(f,h):f\in \bigoplus_{B_{\mathbf{H}_{n}}(e_{H_{n}},R)} B_{\mathbf{G}_{m}}(e_{G_{m}},R),\ h\in B_{\mathbf{H}_{n}}(e_{H_{n}},R)\right\};
\]
the corresponding assertion also holds for the case that $(m,n)=(\infty,\infty)$.
Hence, via the two partial isomorphisms above, it may be seen that the map which sends $(s_j^{(m)}\delta_{e_{H_n}},e_{H_n})$ to $(s_j^{(\infty)}\delta_{e_{H_{\infty}}},e_{H_{\infty}})$ for each $j\in [k]$ and which sends $(\mathbf{e},t_i^{(m)})$ to $(\mathbf{e},t_i^{(\infty)})$ for each $i\in [l]$ extends to a paritial isomorphism from $B_{\mathbf{G}_{m}\wr \mathbf{H}_{n}}(e_{G_{m}\wr H_{n}},R)$ to $B_{\mathbf{G}_{\infty}\wr \mathbf{H}_{\infty}}(e_{G_{\infty}\wr H_{\infty}},R)$. This exactly says that the two $R$-balls in the Cayley diagrams of our concern are isomorphic. 
\end{proof}

\begin{remark}\label{remark=Gruenberg}
In this proof, it is essential that convergences in the Cayley topology only observe \textit{local behavior} of marked groups. In what follows, we explain this feature in more detail because it is the key to having some intuition of the Cayley topology. 

Consider the case where, in the proof of Lemma~\ref{lemma=WreathProducts}, $\mathbf{G}_m$ and $\mathbf{H}_n$ are, respectively, marked group quotient, respectively, of $\mathbf{G}_{\infty}$ and $\mathbf{H}_{\infty}$. In this case, the partial isomorphisms $B_{\mathbf{G}_{m}}(e_{G_{m}},R) \to B_{\mathbf{G}_{\infty}}(e_{G_{\infty}},R)$ and $B_{\mathbf{H}_{n}}(e_{H_{n}},R) \to B_{\mathbf{H}_{\infty}}(e_{H_{\infty}},R)$, in fact, come, respectively, from group quotient maps $G_{\infty}\twoheadrightarrow G_m$ and $H_{\infty}\twoheadrightarrow H_n$. More precisely, these group quotient maps are both injective inside $R$-balls, and the partial isomorphisms above are, respectively, given by the inverse maps of the restriction of these projections. We warn that, even in this case, the inverse map of the partial isomorphism $B_{\mathbf{G}_{m}\wr \mathbf{H}_{n}}(e_{G_{m}\wr H_{n}},R)\to B_{\mathbf{G}_{\infty}\wr \mathbf{H}_{\infty}}(e_{G_{\infty}\wr H_{\infty}},R)$, constructed in the proof above, may \textit{not} be extended to a group quotient map. Here, there is no problem to construct the group quotient map $G_{\infty}\wr H_{\infty}\twoheadrightarrow G_m\wr H_{\infty}$. The problem lies in \textit{the other step}, namely, when we try to construct ``the quotient map $G_{m}\wr H_{\infty}\twoheadrightarrow G_m\wr H_{n}$.''
This is because if $H_{\infty}\twoheadrightarrow H_{n}$ is a genuine projection (namely, not an isomorphism), then we need to ``fold'' the direct sum $\bigoplus_{H_{\infty}}G_{m}$ to obtain $\bigoplus_{H_{m}}G_{m}$. However, this is \textit{totally impossible unless $G_m$ is abelian}. Indeed, for every distinct pair $h_1\ne h_2\in H_{\infty}$ with the same image in $H_n$ by the quotient map $H_{\infty}\twoheadrightarrow H_{n}$, every two elements of the form $g_1\delta_{h_1}$ and $g_2\delta_{h_2}$ ($g_1,g_2\in G_m$) commute; unless $G_m$ is abelian, there is \textit{no way possible} to ``merge'' them into a single element in $G_m$ inside $\bigoplus_{H_n}G_m$. 

From this viewpoint, the proof of Lemma~\ref{lemma=WreathProducts} works because we only care \textit{local} structure of marked groups. From the \textit{local} point of view, even in the case that we consider in the paragraph above, there is \textit{no problem} to have a partial isomorphism between two standard wreath products. This is because, if we choose a sufficiently small $R$ (compared with $n$), then we have \textit{no} distinct pair $h_1\ne h_2\in H_{\infty}$ \textit{inside the $R$-ball} $B_{\mathbf{H}_{\infty}}(e_{H_{\infty}},R)$ that shares the same image by $H_{\infty}\twoheadrightarrow H_{n}$. If we restrict ourselves to this $R$-ball, then the serious  issue on ``folding'' above becomes \textit{completely harmless}.

The argument above explains why the proof of Lemma~\ref{lemma=WreathProducts} does \textit{not} imply that the standard wreath product of two finitely generated RF groups is RF. In fact, Gruenberg showed that if $G\wr H$ is RF for an infinite $H$, then $G$ must be \textit{abelian}; see \cite[Subsection~2.2]{Gruenberg}.
\end{remark}

In the present paper, we treat several convergences of marked groups in the Cayley topology; most of them are straightforward to see if the reader acquires the \textit{local} point of view in Cayley convergences. For this reason, instead of giving rigorous proofs of these convergences, we only describe intuitive ideas based on the local viewpoint, all of which will be made rigorous.

\subsection{A Hall-type argument}\label{subsection=Hall}
One of the main important points of standard (restricted) wreath products in the current paper is that it enables us to \textit{reduce the number of generators} of groups; in this subsection, we explain it. Another, even more important, point is on the \textit{absorption trick} in Cayley convergence, as we will see in Subsection~\ref{subsection=AbsorptionTrick}.

Our way of reducing the number of generators is inspired by a construction of Hall \cite[1.5]{Hall}, which has its origin in the work of B. H. Neumann and H. Neumann \cite{NeumannNeumann}. Hall employed \textit{unrestricted} wreath products in his construction (namely, instead of $G\wr H=(\bigoplus_H G)\rtimes H$, he considered $(\prod_H G)\rtimes H$); in this paper we consider a variant of it, which utilizes standard \textit{restricted} wreath products. 

\begin{lemma}[Adaptation of the Hall embedding]\label{lemma=Hall}
Let $G$ be a finitely generated LEF group and $S=(s_1^{(\infty)},\ldots, s_k^{(\infty)})$ be a $k$-marking. Consider two elements $w^{(\infty)},u^{(\infty)} \in G\wr \mathbb{Z}$, where $w^{(\infty)}=(f^{(\infty)},0)$ and $u^{(\infty)}=(\mathbf{e},1)$. Here $f^{(\infty)}=\bigoplus_{j\in [k]}s_j \delta_{2^j}$, namely,
\[
f^{(\infty)}(n)=\left\{
\begin{array}{cl}
s_j, & \textrm{if $n=2^j$ for $j\in [k]$},\\
e_G, & \textrm{otherwise.}
\end{array}
\right.
\]
Let $\Gamma=\Gamma_{(G;S)}$ be the group generated by $w^{(\infty)}$ and $u^{(\infty)}$. 

Then, $\Gamma$ is LEF and it contains an isomorphic copy of the subgroup of $G$ generated by $\{[s_i^{(\infty)},s_j^{(\infty)}]:i,j\in [k]\}$.
\end{lemma}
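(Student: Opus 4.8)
The plan is to prove the two assertions separately: first that $\Gamma$ is LEF, then that it contains the claimed copy of a subgroup of $G$.

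For the LEF property, I would exploit that $G$ is LEF together with the closure of the LEF property under wreath products (Lemma~\ref{lemma=WreathProducts}) and under passage to finitely generated subgroups (Lemma~\ref{lemma=LEFsubgroups}). Concretely, fix a marking $S=(s_1^{(\infty)},\ldots,s_k^{(\infty)})$ of $G$ and take a LEF approximation $(G_m;s_1^{(m)},\ldots,s_k^{(m)})\stackrel{\mathrm{Cay}}{\to}(G;S)$ by finite marked groups. Pair this with the constant approximation $\mathbb{Z}\stackrel{\mathrm{Cay}}{\to}\mathbb{Z}$ — or, if one insists on finite approximants, with $\mathbb{Z}/N\mathbb{Z}\stackrel{\mathrm{Cay}}{\to}\mathbb{Z}$ as $N\to\infty$ (both are fine since the ambient $\Gamma$ will be extracted as a finitely generated \emph{subgroup} of the limit wreath product, and LEF passes to such subgroups). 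By Lemma~\ref{lemma=WreathProducts}, $G_m\wr\mathbb{Z}\stackrel{\mathrm{Cay}}{\to}G\wr\mathbb{Z}$ in $\mathcal{G}(k+1)$. Now $w^{(\infty)}=(f^{(\infty)},0)$ and $u^{(\infty)}=(\mathbf{e},1)$ are each given by a fixed word in the $(k+1)$ standard generators $(s_j\delta_{e},e)$ and $(\mathbf{e},1)$ of $G\wr\mathbb{Z}$: indeed $u^{(\infty)}$ is literally the last generator, and $f^{(\infty)}=\bigoplus_{j}s_j\delta_{2^j}$ means $w^{(\infty)}=\prod_{j\in[k]}u^{2^j}(s_j\delta_{e},e)u^{-2^j}$ (the order of the factors is immaterial since their supports are disjoint). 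Applying Lemma~\ref{lemma=LEFsubgroups} with these words $\omega_j$ yields that the subgroups $\Gamma_m\leqslant G_m\wr\mathbb{Z}$ generated by the corresponding $w^{(m)},u^{(m)}$ converge in the Cayley topology to $\Gamma=\langle w^{(\infty)},u^{(\infty)}\rangle$, and the LEF property of each $\Gamma_m$ (as a finitely generated subgroup of the LEF group $G_m\wr\mathbb{Z}$, which is even finite-by-$\mathbb{Z}$, hence LEF) is inherited by the limit since the LEF property is closed in $\mathcal{G}(k+1)$. Hence $\Gamma$ is LEF.

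For the embedding statement, the point is a purely algebraic computation inside $G\wr\mathbb{Z}$ that recovers commutators $[s_i^{(\infty)},s_j^{(\infty)}]$ as elements of $\Gamma$ supported at a single coordinate. Conjugating $w^{(\infty)}$ by powers of $u^{(\infty)}$ shifts the support of $f^{(\infty)}$: $u^{-n}w^{(\infty)}u^{n}=(\, \bigoplus_{j}s_j\delta_{2^j+n}\,,0)$. I would then form commutators of such translates so as to make all contributions cancel except at one place. Specifically, for $i,j\in[k]$ with, say, $i\ne j$, the shifts by $n=0$ and by $n=2^i-2^j$ line up the coordinate $2^i$ of the first with the coordinate $2^i$ of the second in such a way that $[w^{(\infty)},\,u^{-(2^i-2^j)}w^{(\infty)}u^{2^i-2^j}]$ is an element of $\bigoplus_{\mathbb{Z}}G$ all of whose coordinates are trivial except at $2^i$ (and possibly a companion coordinate, which one arranges to also collapse by choosing the base-$2$ offsets $2^j$ generically — this is exactly why Hall uses a sparse, $\mathbb{Z}$-linearly-independent-over-the-relevant-structure set of exponents); at the surviving coordinate the value is precisely $[s_i^{(\infty)},s_j^{(\infty)}]$ up to the convention $[\gamma_1,\gamma_2]=\gamma_1^{-1}\gamma_2^{-1}\gamma_1\gamma_2$. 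Collecting these elements for all $i,j$, all supported at distinct single coordinates of $\bigoplus_{\mathbb{Z}}G$, the subgroup of $\Gamma$ they generate is a subgroup of a single copy of $G$ sitting as one factor of $\bigoplus_{\mathbb{Z}}G$ (one can translate them all to coordinate $0$ by a further conjugation, or simply observe that elements supported at a common coordinate generate a copy of the subgroup they generate in $G$), namely the subgroup $\langle [s_i^{(\infty)},s_j^{(\infty)}] : i,j\in[k]\rangle\leqslant G$. Since $\Gamma\leqslant G\wr\mathbb{Z}$, this subgroup is literally a subgroup of $\Gamma$, giving the claimed isomorphic copy.

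I expect the genuine obstacle to be the bookkeeping in the second part: arranging the powers of $u^{(\infty)}$ so that the iterated commutators of translated copies of $w^{(\infty)}$ have support reduced to a \emph{single} coordinate, with the prescribed commutator value there and no unwanted surviving entries. The doubling pattern $n\mapsto 2^n$ in the definition of $f^{(\infty)}$ is chosen precisely to guarantee enough separation between the supports $\{2^j\}_{j\in[k]}$ and their translates so that at most one overlap occurs in each commutator; making this count fully precise — i.e. checking that for the chosen offsets the sumset configurations $\{2^j+n\}$ for the relevant $n$ intersect $\{2^j\}$ in exactly one point — is the technical heart, though it is elementary. The first part (LEF-ness) is by contrast essentially formal given Lemmas~\ref{lemma=WreathProducts} and~\ref{lemma=LEFsubgroups} and the closedness of the LEF property in the space of marked groups; the only care needed there is the innocuous choice of how to approximate $\mathbb{Z}$ by finite groups, which does not affect the conclusion because $\Gamma$ is extracted as a finitely generated subgroup.
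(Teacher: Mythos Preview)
Your approach matches the paper's. The LEF claim is disposed of in the paper in one line by invoking Lemma~\ref{lemma=WreathProducts}, exactly as you do. For the embedding, the paper carries out the same commutator computation, but organizes the shifts more cleanly: rather than forming $[w^{(\infty)},\,u^{-(2^i-2^j)}w^{(\infty)}u^{2^i-2^j}]$, it first sets $w_i^{(\infty)}=(u^{(\infty)})^{2^i}w^{(\infty)}(u^{(\infty)})^{-2^i}$ for each $i\in[k]$ (so that $s_i^{(\infty)}$ sits at coordinate $0$) and then computes $[w_i^{(\infty)},w_j^{(\infty)}]$. The combinatorial fact you flag as the ``technical heart'' is stated explicitly: $2^a-2^i=2^b-2^j$ with $a,b,i,j\in[k]$ forces either $a=i,\ b=j$ (common value $0$) or $(a,i)=(b,j)$. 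Hence for $i\ne j$ the commutator is supported \emph{only} at $0$, with value $[s_i^{(\infty)},s_j^{(\infty)}]$. There is no ``companion coordinate'' to worry about; your hedging there is unnecessary.

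One slip to clean up: with your shifts, the commutator for the pair $(i,j)$ lands at position $2^i$, so different pairs do \emph{not} all land at distinct coordinates (all pairs sharing the first index coincide), and in any case elements supported at genuinely distinct coordinates would generate a subgroup of a direct product, not of a single copy of $G$. Your parenthetical fix (conjugate everything back to coordinate $0$) is correct and is precisely what the paper's pre-shifting by $(u^{(\infty)})^{2^i}$ achieves from the outset, so that all the $[s_i^{(\infty)},s_j^{(\infty)}]$ already sit at the same coordinate.
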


\begin{proof}
The LEF property of $\Gamma$ follows from Lemma~\ref{lemma=WreathProducts}. In what follows, we prove the latter assertion.

For each $i\in [k]$, let $w_i^{(\infty)}=(u^{(\infty)})^{2^i}w^{(\infty)}(u^{(\infty)})^{-2^i}=(f_i^{(\infty)},0)$. Here we set $f_i^{(\infty)}=\bigoplus_{j\in [k]}s_j \delta_{2^j-2^i}$.
The key observation here is that for $i,j,i',j'\in [k]$, the equality $2^j-2^i=2^{j'}-2^{i'}$ holds \textit{if and only if} either of the following is satisfied:
\begin{itemize}
 \item $i=j$ and $i'=j'$; in that case, the value of the equality above is $0$,
 \item $(i,j)=(i',j')$.
\end{itemize}
It implies that for every $i,j\in [k]$, $[w_i^{(\infty)},w_j^{(\infty)}]$ equals $(f_{i,j}^{(\infty)},0)$. Here we let $f_{i,j}^{(\infty)}(n)=[s_i^{(\infty)},s_j^{(\infty)}]$ for $n=0$ and $f_{i,j}^{(\infty)}(n)=e_G$ otherwise. It proves our assertion. 
\end{proof}

We employ Hall-type arguments repeatedly in the proof of Theorem~\ref{mtheorem=MainTheorem}; the significant point of this argument is that by means of standard wreath products, we create a copy of group generated by \textit{single commutators on a given set of generators} in a group with small number of generators (in the proof of Theorem~\ref{mtheorem=MainTheorem}, we will apply this to obtain $2$-generated groups and $3$-generated groups).

\subsection{Encoding into alternating/symmetric groups}\label{subsection=Encoding}

We will explain ways to encode information of a convergence sequence in the Cayley topology into alternating groups. This procedure plays a key role, in relation to Lemma~\ref{lemma=Goursat}, to \textit{show density} of finitely generated subgroups in our construction. First we discuss encoding into symmetric groups. Before proceeding to that, we collect our notation.

Let $B$ be a non-empty, at most countable set. Denote by $\mathrm{Sym}(B)$ the full symmetric group of $B$ and by $\mathrm{Sym}_{<\aleph_0}(B)$ the finitary symmetric group (the group of all permutations on $B$ with finite support). By $\mathrm{Alt}(B)$, we mean the alternating group over $B$, namely, the union of $\mathrm{Alt}(B_0)$ over all non-empty finite subsets $B_0\subseteq B$ via the natural inclusion $\mathrm{Sym}(B_0) \hookrightarrow \mathrm{Sym}(B)$.

Our encoding process into symmetric groups is explained in the following manner.
\begin{lemma}[Encoding into symmetric groups]\label{lemma=SymmetricGroups}
Let $k\in \mathbb{N}_{\geq 1}$. Let $(\mathbf{G}_{m})_{m\in \mathbb{N}}=(G_m;s_1^{(m)},\ldots ,s_k^{(m)}))_{m}$ be a LEF approximation of an infinite group $\mathbf{G}_{\infty}=(G_{\infty};s_1^{(\infty)},\ldots ,s_k^{(\infty)})$. Assume that for every $m\in \mathbb{N}\cup\{\infty\}$ and for every $j\in [k]$, it holds that $s_j^{(m)}\ne e_{G_m}$. 

Then we have the following convergence in $\mathcal{G}(2k)$:
\begin{eqnarray*}
& &(\mathrm{Sym}(G_m);\chi_{s_1^{(m)}}, \ldots ,\chi_{s_k^{(m)}},\theta_{s_1^{(m)}},\ldots ,\theta_{s_k^{(m)}}) \\
\quad \stackrel{\mathrm{Cay}}{\longrightarrow}\quad & &(\mathrm{Sym}_{<\aleph_0}(G_\infty)\rtimes G_{\infty};\chi_{s_1^{(\infty)}}, \ldots ,\chi_{s_k^{(\infty)}},\theta_{s_1^{(\infty)}},\ldots ,\theta_{s_k^{(\infty)}}).
\end{eqnarray*}

Here, $G_{\infty}$ acts on $\mathrm{Sym}_{<\aleph_0}(G_\infty)$ as permutations induced by right multiplication; for a countable group $G$ and for $\gamma \in G\setminus\{e_G\}$, we define  elements $\chi_{\gamma}\in \mathrm{Sym}_{<\aleph_0}(G)$ and $\theta_{\gamma} \in \mathrm{Sym}(G)$ by
\begin{eqnarray*}
\chi_{\gamma}&=&(\textrm{the transposition on $\{e_{G},\gamma\}$}), \\
\theta_{\gamma}&=&(\textrm{the permutation on $G$ given by the right-multiplication of $\gamma$}).
\end{eqnarray*}
\end{lemma}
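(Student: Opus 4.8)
The plan is to verify this Cayley convergence using the \emph{local} point of view stressed throughout the paper, i.e. by producing, for each radius $R$, an explicit partial isomorphism between the $R$-ball around the identity of the finite marked group $(\mathrm{Sym}(G_m);\chi_{s_j^{(m)}},\theta_{s_j^{(m)}})$ and the $R$-ball around the identity of $(\mathrm{Sym}_{<\aleph_0}(G_\infty)\rtimes G_\infty;\chi_{s_j^{(\infty)}},\theta_{s_j^{(\infty)}})$. First I would set up the semidirect-product description carefully: an element of $\mathrm{Sym}_{<\aleph_0}(G_\infty)\rtimes G_\infty$ is a pair $(\sigma,\gamma)$ acting on $G_\infty$ by $x\mapsto \sigma(x\gamma)$ (or some fixed convention), and the generators are $\chi_{s_j^{(\infty)}}=(\text{transposition }\{e,s_j^{(\infty)}\},\,e)$ and $\theta_{s_j^{(\infty)}}=(\mathrm{id},\,s_j^{(\infty)})$; note that inside $\mathrm{Sym}(G_m)$ the element $\theta_{s_j^{(m)}}$ genuinely is right multiplication by $s_j^{(m)}$, which is why on the limit side the ``$G_\infty$ part'' emerges. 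The point is that on the finite side there is no literal semidirect product, only honest permutations of $G_m$, so the correspondence is: a word of length $\le R$ in the generators, evaluated in $\mathrm{Sym}(G_m)$, should match the same word evaluated in $\mathrm{Sym}_{<\aleph_0}(G_\infty)\rtimes G_\infty$.

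The key quantitative step is a support-control estimate. I would show that any word $W$ of length $\le R$ in the $2k$ generators, evaluated on the finite side, is a permutation of $G_m$ supported on the ball $B_{\mathbf{G}_m}(e_{G_m}, c R)$ for some constant $c$ depending only on $R$ (one can even take $c=1$ after a harmless rescaling, or track it explicitly), composed with right multiplication by some element lying in $B_{\mathbf{G}_m}(e_{G_m},R)$ — because the $\theta$'s contribute a translation part that is a product of at most $R$ generators $s_j^{(m)}$, and the $\chi$'s only move points within a bounded distance of the ``current translate of $e$''. Concretely, each occurrence of a $\chi_{s_j}$ in the word acts as a transposition, and conjugating it by the accumulated $\theta$-translation moves its support to a transposition of the form $\{g, s_j g\}$ with $d_{\mathbf{G}_m}(e_{G_m}, g)\le R$; hence the whole word's non-translation part is a product of $\le R$ such transpositions, all supported in $B_{\mathbf{G}_m}(e_{G_m}, 2R)$. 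Since the LEF approximation gives, for $m$ large, a partial isomorphism $B_{\mathbf{G}_m}(e_{G_m}, 2R)\to B_{\mathbf{G}_\infty}(e_{G_\infty}, 2R)$ respecting the marking, this partial isomorphism transports the transposition $\{g, s_j g\}$ on $G_m$ to the transposition $\{\phi(g), s_j^{(\infty)}\phi(g)\}$ on $G_\infty$, and transports the translation part correctly as well — yielding that $W$ evaluated on the finite side is trivial if and only if $W$ evaluated on the limit side is trivial, i.e. the balls in the two Cayley diagrams agree. I would phrase this as: the map $\chi_{s_j^{(m)}}\mapsto\chi_{s_j^{(\infty)}}$, $\theta_{s_j^{(m)}}\mapsto\theta_{s_j^{(\infty)}}$ extends to a partial isomorphism of the relevant $R$-balls, which by the characterization in Subsection~\ref{subsection=TheSpaceOfMarkedGroups} is exactly the desired Cayley convergence.

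The main obstacle I expect is bookkeeping the two moving parts simultaneously: the translation (``$G_\infty$'') component and the finitary-permutation component interact through conjugation, so one must be careful that when we read a word left to right the accumulated translation stays inside a ball of radius $\le R$ (straightforward, at most $R$ factors) \emph{and} that the transpositions, once conjugated by that translation, still have both endpoints inside a ball of radius $\le 2R$ so that the LEF partial isomorphism is defined there. A secondary subtlety is the hypothesis $s_j^{(m)}\ne e_{G_m}$: it is needed precisely so that $\chi_{s_j^{(m)}}$ is a genuine transposition (not the identity) and so that on the limit side the marking is honest; I would remark that without it the statement would fail since $\chi_{e}=\mathrm{id}$. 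I would also note in passing that, in the finite case $G_m$ finite, $\theta_{s_j^{(m)}}$ being right multiplication means $\langle\theta_{s_1^{(m)}},\dots,\theta_{s_k^{(m)}}\rangle$ is the right regular copy of $G_m$ inside $\mathrm{Sym}(G_m)$, which makes the semidirect structure of the limit transparent; but this is only orientation and not needed for the proof, which rests entirely on the ball-matching argument above. Finally, the analogous statement for $\mathrm{Alt}$ in place of $\mathrm{Sym}$ (which is what is actually used later, via Lemma~\ref{lemma=Goursat}, since alternating groups are simple for degree $\ge 5$) will follow by the same argument once one checks the generators can be taken to be even permutations, e.g. by replacing each $\chi_{s_j}$ with a product of two transpositions — but that refinement belongs to Lemma~\ref{lemma=AlternatingGroups}, not here.
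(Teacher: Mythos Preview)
Your convergence argument is correct and follows essentially the same approach as the paper: both rest on the local/support-control viewpoint, showing that a word of length $\le R$ in the generators is determined by its action on a bounded ball in $G_m$ (the paper phrases this as the restriction map $\xi \mapsto \xi|_{B_{\mathbf{G}_m}(e_{G_m}, R)}$ being injective on a suitable ball of $\mathbf{H}_m$, with image contained in $B_{\mathbf{G}_m}(e_{G_m}, R+r)$), and then invoking the LEF partial isomorphism of balls in $\mathbf{G}_m$ and $\mathbf{G}_\infty$. Your bookkeeping of the translation versus finitary-permutation parts is in fact more explicit than the paper's intentionally intuitive sketch.

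However, there is a gap: you never verify that the $2k$-tuple actually \emph{generates} $\mathrm{Sym}(G_m)$ for finite $m$ and $\mathrm{Sym}_{<\aleph_0}(G_\infty)\rtimes G_\infty$ for $m=\infty$. Without this, the marked groups appearing in the statement are not even well-defined. You explicitly dismiss the structural observation as ``only orientation and not needed for the proof'', but the paper opens by checking precisely this point: conjugating $\chi_{s_1^{(m)}}$ by words in the $\theta$'s produces the transposition on $\{e_{G_m}, \gamma\}$ for every $\gamma \ne e_{G_m}$, and these generate the full symmetric group (respectively the finitary symmetric group in the infinite case, with the $\theta$'s supplying the $G_\infty$ factor of the semidirect product). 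The step is easy but not optional---your ball-matching argument alone only shows that the subgroup \emph{generated} by the tuple on the finite side Cayley-converges to the subgroup \emph{generated} on the infinite side, without identifying either.
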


\begin{proof}
First it holds that for every $m\in \mathbb{N}$, $(\chi_{s_1^{(m)}}, \ldots ,\chi_{s_k^{(m)}},\theta_{s_1^{(m)}},\ldots ,\theta_{s_k^{(m)}})$ is a marking of $\mathrm{Sym}(G_m)$; this is because for every $\gamma\in G_m\setminus \{e_{G_m}\}$, we may write the transposition on $\{e_{G_m},\gamma\}$ as a certain product of these elements. Similarly, we can show that $(\chi_{s_1^{(\infty)}}, \ldots ,\chi_{s_k^{(\infty)}},\theta_{s_1^{(\infty)}},\ldots ,\theta_{s_k^{(\infty)}})$ is a marking of $\mathrm{Sym}_{<\aleph_0}(G_{\infty})\rtimes G_{\infty}$.

Finally, we prove the Cayley convergence in the statement of Lemma~\ref{lemma=SymmetricGroups}; as we announced below Remark~\ref{remark=Gruenberg}, we only give some intuitive description. Let for $m\in \mathbb{N}$, $\mathbf{H}_m=(\mathrm{Sym}(G_m);\chi_{s_1^{(m)}}, \ldots ,\chi_{s_k^{(m)}},\theta_{s_1^{(m)}},\ldots ,\theta_{s_k^{(m)}})$ and let $\mathbf{H}_{\infty}=(\mathrm{Sym}_{<\aleph_0}(G_\infty)\rtimes G_{\infty};\chi_{s_1^{(\infty)}}, \ldots ,\chi_{s_k^{(\infty)}},\theta_{s_1^{(\infty)}},\ldots ,\theta_{s_k^{(\infty)}})$ The key here is the following: for every $m\in \mathbb{N}$ and for every $R\in \mathbb{R}_{\geq 0}$, there exists $r=r(m,R)\leq R$ with $r(m,R)\to \infty$ as $\min\{m,R\}\to \infty$ such that elements of $B_{\mathbf{H}_m}(e_{\mathrm{Sym}(G_m)},r)$ is completely determined by its image of $B_{\mathbf{G}_m}(e_{G_m},R)$; more precisely, the map
\[
 \mathrm{Sym}(G_m)\to \mathrm{Map}(B_{\mathbf{G}_m}(e_{G_m},R),G_m);\quad \xi \mapsto \xi\mid_{B_{\mathbf{G}_m}(e_{G_m},R)}
\]
is \textit{injective on $B_{\mathbf{H}_m}(e_{\mathrm{Sym}(G_m)},r)$}. A similar statement holds for $m=\infty$ as well. For such $r=r(m,R)$, it holds that for by every element of $B_{\mathbf{H}_m}(e_{\mathrm{Sym}(G_m)},r)$, the image of $B_{\mathbf{G}_m}(e_{G_m},R)$ by it is included in $B_{\mathbf{G}_m}(e_{G_m},R+r)$. Therefore, we conclude that the \textit{local} picture of $\mathrm{CayD}(\mathbf{G}_m)$ \textit{completely determines} that of $\mathrm{CayD}(\mathbf{H}_m)$. Hence, the Cayley convergence $\mathbf{G}_{m}\stackrel{\mathrm{Cay}}{\to}\mathbf{G}_{\infty}$ may be encoded into symmetric groups as $\mathbf{H}_{m}\stackrel{\mathrm{Cay}}{\to}\mathbf{H}_{\infty}$.
\end{proof}

We proceed to encoding into alternating groups. For each $m\in \mathbb{N}$ and each $j\in [k]$, $\chi_{s_j^{(m)}}$ above has a negative sign; the sign of $\theta_{s_j^{(m)}}$ equals the parity of the product of $((\textrm{the order of $s_j^{(m)}$})-1)$ and $\#\left(G_m/\langle s_j^{(m)}\rangle\right)$. To obtain our encoding result into alternating groups, we need to impose some condition on signs of $\theta_{s_j^{(m)}}$. Here is the result which we will use.

\begin{lemma}[Encoding into alternating groups]\label{lemma=AlternatingGroups}We stick to the setting of Lemma~$\ref{lemma=SymmetricGroups}$.  We furthermore assume that for each $m\in \mathbb{N}$ and each $j\in [k]$, $\theta_{s_j^{(m)}}$ has a positive sign. Then, 
\[
(\chi_{s_1^{(m)}}\chi_{s_2^{(m)}},\ldots ,\chi_{s_1^{(m)}}\chi_{s_k^{(m)}}, \theta_{s_1^{(m)}},\ldots , \theta_{s_k^{(m)}}, \chi_{s_1^{(m)}}\theta_{s_1^{(m)}}\chi_{s_1^{(m)}}, \ldots ,\chi_{s_1^{(m)}}\theta_{s_k^{(m)}}\chi_{s_1^{(m)}})
\]
is a $(3k-1)$-marking of $\mathrm{Alt}(G_m)$ for every $m\in \mathbb{N}$, and it is a $(3k-1)$-marking of $\mathrm{Alt}(G_{\infty})\rtimes G_{\infty}$ for $m=\infty$. Moreover, we have the Cayley convergence
\[
\mathrm{Alt}(G_m)\  \stackrel{\mathrm{Cay}}{\longrightarrow}\  \mathrm{Alt}(G_\infty)\rtimes G_{\infty}
\]
with respect to the $(3k-1)$-markings above for $m\in \mathbb{N}\cup \{\infty\}$.
\end{lemma}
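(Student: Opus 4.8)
The plan is to reduce Lemma~\ref{lemma=AlternatingGroups} to Lemma~\ref{lemma=SymmetricGroups} by observing that the proposed $(3k-1)$-tuple lies in $\mathrm{Alt}(G_m)$ (resp. $\mathrm{Alt}(G_\infty)\rtimes G_\infty$) and expresses a finite-index subgroup of the group handled in the previous lemma, so that the Cayley convergence can be transported through Lemma~\ref{lemma=LEFsubgroups}. First I would check the sign bookkeeping: $\chi_{s_1^{(m)}}\chi_{s_j^{(m)}}$ is a product of two transpositions, hence even; $\theta_{s_j^{(m)}}$ is assumed to have positive sign, hence even; and $\chi_{s_1^{(m)}}\theta_{s_j^{(m)}}\chi_{s_1^{(m)}}$ is a conjugate of an even permutation, hence even. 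So all $3k-1$ listed elements lie in $\mathrm{Alt}(G_m)$ for finite $m$, and in the index-($\leq 2$) alternating part $\mathrm{Alt}(G_\infty)\rtimes G_\infty$ of $\mathrm{Sym}_{<\aleph_0}(G_\infty)\rtimes G_\infty$ for $m=\infty$ (note $G_\infty$ acts by $\theta$'s, which are even under the hypothesis, so the semidirect product does land in the alternating part).

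Next I would verify the generation claim. The point is that $\mathrm{Alt}(G_m)$ is generated by $3$-cycles of the form $(e_{G_m}\ \gamma\ \gamma')$ for $\gamma\neq\gamma'$ in $G_m\setminus\{e_{G_m}\}$, and each such $3$-cycle can be written using the available data: from Lemma~\ref{lemma=SymmetricGroups} we already know $\chi_{s_1^{(m)}},\dots,\chi_{s_k^{(m)}},\theta_{s_1^{(m)}},\dots,\theta_{s_k^{(m)}}$ generate $\mathrm{Sym}(G_m)$, hence every transposition $(e_{G_m}\ \gamma)$ is a word in them; conjugating $\chi_{s_j^{(m)}}$ by the $\theta$'s produces arbitrary transpositions $(\alpha\ \alpha s_j^{(m)})$, and products of two of these give $3$-cycles. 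The specific generators $\chi_{s_1^{(m)}}\chi_{s_j^{(m)}}$ already exhibit the $3$-cycles $(s_j^{(m)}\ e_{G_m}\ s_1^{(m)})$ when $s_1^{(m)}\neq s_j^{(m)}$, while $\chi_{s_1^{(m)}}\theta_{s_j^{(m)}}\chi_{s_1^{(m)}}$ together with $\theta_{s_j^{(m)}}$ recovers enough conjugation to move these $3$-cycles around $G_m$; one checks that the subgroup they generate acts transitively enough on pairs to be all of $\mathrm{Alt}(G_m)$ (the standard fact that $\mathrm{Alt}(B)$ is generated by the $3$-cycles $(a\ b\ c)$ with $a$ fixed). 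For $m=\infty$ the analogous computation, using that $\theta_{s_j^{(\infty)}}$ is a genuine shift operator on $\mathrm{Sym}_{<\aleph_0}(G_\infty)$, shows the listed tuple generates $\mathrm{Alt}(G_\infty)\rtimes G_\infty$.

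Finally, the Cayley convergence. Each of the $3k-1$ listed generators is a fixed word in the $2k$ generators appearing in Lemma~\ref{lemma=SymmetricGroups}, \emph{independently of $m$}: namely $\chi_1\chi_j$, $\theta_j$, $\chi_1\theta_j\chi_1$ for $j\in[k]$ (with $j\neq 1$ in the first family). By Lemma~\ref{lemma=SymmetricGroups} we have $(\mathrm{Sym}(G_m);\chi_\bullet,\theta_\bullet)\stackrel{\mathrm{Cay}}{\to}(\mathrm{Sym}_{<\aleph_0}(G_\infty)\rtimes G_\infty;\chi_\bullet,\theta_\bullet)$, and then Lemma~\ref{lemma=LEFsubgroups}, applied with these fixed words $\omega_1,\dots,\omega_{3k-1}$ in $F_{2k}$, yields exactly the asserted convergence $\mathrm{Alt}(G_m)\stackrel{\mathrm{Cay}}{\to}\mathrm{Alt}(G_\infty)\rtimes G_\infty$ in $\mathcal{G}(3k-1)$ of the corresponding subgroups. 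The only thing to confirm is that the subgroup generated by those words in the limit marked group is precisely $\mathrm{Alt}(G_\infty)\rtimes G_\infty$ and not a proper subgroup — this is the generation statement from the previous paragraph, so once that is pinned down the convergence is automatic.

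The main obstacle I expect is the generation argument, specifically making the $m=\infty$ case airtight: one must be careful that in $\mathrm{Sym}_{<\aleph_0}(G_\infty)\rtimes G_\infty$ the elements $\chi_{s_1^{(\infty)}}\theta_{s_j^{(\infty)}}\chi_{s_1^{(\infty)}}$ genuinely differ from $\theta_{s_j^{(\infty)}}$ in a way that lets one recover all of $\mathrm{Alt}(G_\infty)$ (their product $\theta_{s_j^{(\infty)}}^{-1}\chi_{s_1^{(\infty)}}\theta_{s_j^{(\infty)}}\chi_{s_1^{(\infty)}}$ is a double transposition in $\mathrm{Alt}(G_\infty)$, and one needs a clean inductive argument that conjugates of these by the $\theta$'s and the $\chi_1\chi_j$'s sweep out all even permutations with finite support); the finite-$m$ analogue is the same computation truncated, and the sign hypothesis is exactly what is needed to keep everything inside the alternating group at every stage.
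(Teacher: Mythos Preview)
Your proposal is correct and follows the same overall architecture as the paper: check that the $(3k-1)$ listed elements are fixed words in the $2k$ generators from Lemma~\ref{lemma=SymmetricGroups}, establish that they generate the alternating part, and then invoke Lemma~\ref{lemma=LEFsubgroups} to transport the Cayley convergence.

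The one genuine difference is in the generation step. The paper's proof is a single sentence citing the \emph{Nielsen--Schreier algorithm}: since $\mathrm{Alt}(G_m)$ has index $2$ in $\mathrm{Sym}(G_m)$ with Schreier transversal $\{e,\chi_{s_1^{(m)}}\}$, the Reidemeister--Schreier procedure applied to the $2k$-marking $(\chi_{s_1^{(m)}},\ldots,\chi_{s_k^{(m)}},\theta_{s_1^{(m)}},\ldots,\theta_{s_k^{(m)}})$ outputs \emph{precisely} the listed $(3k-1)$-tuple (the generator for $j=1$ in the first family being trivial), and the fact that these generate the index-$2$ subgroup is then automatic. This simultaneously explains the specific shape of the marking and handles the $m=\infty$ case uniformly, since the Schreier words are independent of $m$. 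Your approach instead verifies generation by hand via $3$-cycles and double transpositions; this is perfectly valid and more elementary, but it requires the somewhat delicate inductive sweep you flag as the main obstacle, whereas the Nielsen--Schreier viewpoint dispatches generation in one stroke for all $m\in\mathbb{N}\cup\{\infty\}$.

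One minor imprecision: for $m=\infty$ the hypothesis on the sign of $\theta_{s_j^{(m)}}$ is only stated for $m\in\mathbb{N}$, and indeed $\theta_{s_j^{(\infty)}}$ is not a finitary permutation so has no sign. The point is rather that $G_\infty$ acts on $\mathrm{Sym}_{<\aleph_0}(G_\infty)$ preserving $\mathrm{Alt}(G_\infty)$, so $\mathrm{Alt}(G_\infty)\rtimes G_\infty$ is well defined and the listed elements lie in it by inspection; no sign hypothesis is needed at $m=\infty$.
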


\begin{proof}
It follows from Lemma~\ref{lemma=SymmetricGroups}, the Nielsen--Schreier algorithm on generators of finite index subgroups, and Lemma~\ref{lemma=LEFsubgroups}.
\end{proof}

For instance, the assumption in Lemma~\ref{lemma=AlternatingGroups} on signs is satisfied if $\#\left(G_m/\langle s_j^{(m)}\rangle \right)$ is an even number for all $m\in \mathbb{N}$ and for all $j\in [k]$.

We recall the following result of Ore \cite{Ore} on commutators of alternating groups, which will be used in the proof of Theorem~\ref{mtheorem=MainTheorem}; as we mentioned in Subsection~\ref{subsection=Hall}, it is of importance to write a certain element \textit{as a single commutator} to apply a Hall-type argument.
\begin{lemma}[\cite{Ore}]\label{lemma=Ore}
For $n\in \mathbb{N}_{\geq 5}$, every element in $\mathrm{Alt}([n])$ may be written as a single commutator.
\end{lemma}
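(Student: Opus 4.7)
The plan is to proceed by explicit construction organized by cycle type, with Frobenius's commutator-counting formula as a fallback sanity check. The basic observation is that being a commutator is closed under conjugation: if $g = [x,y]$ and $h \in \mathrm{Alt}([n])$, then $hgh^{-1} = [hxh^{-1}, hyh^{-1}]$. Hence it suffices to produce a commutator expression for a single representative of each $\mathrm{Alt}([n])$-conjugacy class. I would first recall the standard classification: the cycle type of an element of $\mathrm{Alt}([n])$ is a partition of $n$ with an even number of even parts, and a $\mathrm{Sym}([n])$-conjugacy class either remains a single $\mathrm{Alt}([n])$-class or splits into two; the split occurs precisely when all cycles of $g$ have pairwise distinct odd lengths.

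The building blocks I would construct are as follows. For a single $(2k+1)$-cycle, I would choose $x$ and $y$ supported inside the cycle with lengths roughly partitioning $2k+1$ so that their overlapping supports force $[x,y]$ to sweep out the cycle. For a product of two disjoint even-length cycles, which is the minimal way even-length cycles occur in $\mathrm{Alt}([n])$, I would take $y$ to permute blocks and $x$ to be a carefully chosen permutation so that $[x,y]$ realizes the prescribed pair of cycles. For longer odd cycles and products of odd cycles, a similar recipe applies. General cycle types are then obtained by taking products of commutator expressions on disjoint supports. These constructions work inside $\mathrm{Sym}([n])$ more or less automatically; the content is in arranging them inside $\mathrm{Alt}([n])$.

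The main obstacle is parity: a natural explicit pair $(x,y)$ may have $x$ or $y$ lying in $\mathrm{Sym}([n]) \setminus \mathrm{Alt}([n])$. The assumption $n \geq 5$ gives enough room to repair this, because one can pre- or post-multiply $x$ by a transposition of free points or by an element commuting with $y$, and symmetrically for $y$, without altering $[x,y]$; the availability of at least five letters guarantees that a $3$-cycle and a disjoint transposition always coexist, providing the flexibility needed to correct signs. The genuinely delicate case is a split $\mathrm{Alt}([n])$-class, where the $\mathrm{Sym}([n])$-centralizer of $g$ is already contained in $\mathrm{Alt}([n])$ and so $\mathrm{Sym}([n])$-conjugation cannot be used to flip between the two halves; here I would either exhibit two separate constructions matching the two split components, or fall back on Frobenius's formula
\[
\#\{(x,y)\in G\times G : [x,y]=g\} = |G|\sum_{\chi \in \mathrm{Irr}(G)} \frac{\chi(g)}{\chi(1)},
\]
applied to $G = \mathrm{Alt}([n])$, and verify positivity of the right-hand side using the character table of the alternating groups. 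I expect this split-class case, together with the bookkeeping required when the cycle type of $g$ exhausts all of $[n]$ so that no free letters are available for parity adjustment, to be the principal difficulty.
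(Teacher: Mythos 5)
The paper does not prove this lemma; it is cited directly to Ore, and the proof of Theorem~\ref{mtheorem=MainTheorem} in fact relies on a refinement extracted from Ore's proof --- namely that, for $\xi \in \mathrm{Alt}([n])$, a commutator pair $(\eta,\zeta)$ with $\xi=[\eta,\zeta]$ may be chosen so that each of $\eta,\zeta$ has order $2$ or $3$ --- which your sketch does not address. Your overall strategy (reduce to one representative per cycle type, construct commutator pairs explicitly for single cycles and disjoint products, and repair parity using the slack afforded by $n\geq 5$) is the classical route, and it is essentially the one taken in Ore's paper.

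There is, however, one genuine misconception in your plan. You single out the split $\mathrm{Alt}([n])$-classes as a delicate special case on the grounds that $\mathrm{Sym}([n])$-conjugation ``cannot be used to flip between the two halves.'' This worry is unfounded. Because $\mathrm{Alt}([n])$ is normal in $\mathrm{Sym}([n])$, if $g=[x,y]$ with $x,y\in\mathrm{Alt}([n])$ and $\sigma\in\mathrm{Sym}([n])$ is \emph{any} permutation, odd permitted, then
\[
\sigma g\sigma^{-1}=[\sigma x\sigma^{-1},\,\sigma y\sigma^{-1}],\qquad \sigma x\sigma^{-1},\ \sigma y\sigma^{-1}\in\mathrm{Alt}([n]).
\]
Hence the set of commutators of $\mathrm{Alt}([n])$ is a union of full $\mathrm{Sym}([n])$-conjugacy classes, so exhibiting a commutator pair for one representative of each cycle type already covers both halves of every split class. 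The entire split-class discussion, and the proposed fallback to Frobenius's character formula, can simply be deleted; keeping them makes the argument look harder than it is (and the character-theoretic verification for $\mathrm{Alt}([n])$ for all $n$ is in fact more delicate than the direct construction). A second, smaller imprecision: multiplying $x$ by a $z$ that merely commutes with $y$ replaces $[x,y]$ by the conjugate $z^{-1}[x,y]z$, not by $[x,y]$ itself; that is harmless for realizing a cycle type but is not ``without altering $[x,y]$'' as written, whereas a $z$ supported on letters disjoint from both $x$ and $y$ does leave $[x,y]$ unchanged. What remains, and what you flag yourself, is that the actual commutator expressions are never written down; that is where all the content of Ore's proof lives, so as it stands the proposal is a correct plan rather than a proof.
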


\section{Proof of Theorem~\ref{mtheorem=MainTheorem}}\label{section=ProofOfMainTheorem}
Recall our outlined proof of Theorem~\ref{mtheorem=MainTheorem} from Section~\ref{section=Organization}. The second and fourth steps are already described, respectively, in Subsections~\ref{subsection=Encoding} (Lemma~\ref{lemma=AlternatingGroups}) and \ref{subsection=DiagonalProducts} (Lemma~\ref{lemma=DiagonalProducts}). The first step  will be explained in Subsection~\ref{subsection=Auxiliary} (Lemma~\ref{lemma=Auxiliary}). In the third step we employ Proposition~\ref{proposition=Absorption}; it is a combination of a Hall-type embedding argument (Lemma~\ref{lemma=Hall}) and the absorption trick (see Lemma~\ref{lemma=Absorption} for a prototype). We will explain Proposition~\ref{proposition=Absorption}, which  is the key proposition to the proof,  in Subsection~\ref{subsection=Absorption}. In Subsection~\ref{subsection=ProofOfMainTheorem}, we demonstrate the complete proof of Theorem~\ref{mtheorem=MainTheorem}.

\subsection{An auxiliary lemma}\label{subsection=Auxiliary}
We prove the following auxiliary lemma, which enables us to embed a finitely generated LEF group into a \textit{LEF} group generated by finitely many \textit{torsions}.

\begin{lemma}[Embedding into a LEF group generated by involutions]\label{lemma=Auxiliary}
Let $G$ be a finitely generated LEF group. Then there exists a finitely generated group $G^{\#}$ that satisfies the following three conditions.
\begin{itemize}
  \item $G^{\#}$ admits a set of generators all of whose elements are of order $2$.
  \item $G^{\#}$ contains an isomorphic copy of $G$. 
  \item $G^{\#}$ is LEF.
\end{itemize}
\end{lemma}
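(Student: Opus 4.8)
The plan is to embed $G$ into a wreath product and then pass to a suitable subgroup generated by involutions, using the standard trick that a wreath product $A \wr \mathrm{Sym}(B)$ is generated by transpositions together with elements moving coordinates around, provided $A$ itself embeds into something generated by involutions. First I would fix a $k$-marking $S=(s_1,\ldots,s_k)$ of $G$ and embed $G$ into the finitary symmetric group $\mathrm{Sym}_{<\aleph_0}(G)$ via the right regular representation $\rho$, which is the $\theta$-part of the encoding in Lemma~\ref{lemma=SymmetricGroups}. The point of that lemma is precisely that this regular embedding can be realized as a Cayley limit of finite symmetric groups: $(\mathrm{Sym}(G_m);\ldots) \stackrel{\mathrm{Cay}}{\to} (\mathrm{Sym}_{<\aleph_0}(G_\infty)\rtimes G_\infty;\ldots)$, so the ambient group $\mathrm{Sym}_{<\aleph_0}(G)\rtimes G$ is already known to be LEF. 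Now inside $\mathrm{Sym}_{<\aleph_0}(G)\rtimes G$, the diagonal (or rather the image of $G$ under $\rho$) is a copy of $G$, and $\mathrm{Sym}_{<\aleph_0}(G)$ is generated by transpositions, i.e. by involutions. The subtlety is that $\rho(s_j)$ itself need not be an involution, so $G$ is not literally inside the subgroup generated by the transpositions.

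To remedy this I would form the group $G^{\#} := \mathrm{Sym}_{<\aleph_0}(G) \rtimes G$ but look for a generating set of involutions for \emph{this whole group}. Each element $\rho(s_j)\in\mathrm{Sym}_{<\aleph_0}(G)$ is a finitary permutation of $G$, hence a product of transpositions; those transpositions lie in $\mathrm{Sym}_{<\aleph_0}(G)$, which we keep. The remaining piece is the semidirect-product factor $G$ acting by right multiplication — and here I would invoke the classical fact that any (countable) group embeds into a group generated by two (or finitely many) involutions once one passes to a larger permutational ambient group: concretely, realize $G$ as acting on $G\sqcup G$ by a ``doubling'' of the regular action, so that right multiplication by $s_j$ becomes conjugate to its inverse by a fixed involution swapping the two copies, and then each $s_j$-translation is a product of the swap involution with another involution (this is the standard ``every element is a product of two involutions in $\mathrm{Sym}$'' argument, valid for infinite permutation groups). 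Alternatively, and more in the spirit of the paper, I would absorb the translation part into an alternating-group encoding as in Lemma~\ref{lemma=AlternatingGroups} and use that $\mathrm{Alt}$ of an infinite set is generated by $3$-cycles, each of which is a commutator of two involutions — but for a literal generating set of involutions one really wants the symmetric rather than alternating picture, so the swap-involution construction is cleanest.

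The LEF property of $G^{\#}$ is then inherited: $G^{\#}$ is either literally $\mathrm{Sym}_{<\aleph_0}(G)\rtimes G$ (LEF by Lemma~\ref{lemma=SymmetricGroups}, since the LEF property is closed under Cayley limits and under finitely generated subgroups — Lemma~\ref{lemma=LEFsubgroups}), or a finitely generated subgroup of a modest permutational enlargement of it that is itself a Cayley limit of finite symmetric groups (by the same lemma applied to the doubled action $G\curvearrowright G\sqcup G$, whose finite approximations are $\mathrm{Sym}(G_m\sqcup G_m)$). In either case one checks the three bullet points: the involution generating set is built by construction; the copy of $G$ is the image of the (possibly doubled) regular representation $\rho$, which is faithful; and LEF-ness comes from Lemma~\ref{lemma=SymmetricGroups} together with the closedness of LEF under finitely generated subgroups.

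\textbf{Main obstacle.} The genuine difficulty is the interaction between the two requirements ``generated by involutions'' and ``finitely generated'': transpositions of $\mathrm{Sym}_{<\aleph_0}(G)$ are involutions but there are infinitely many of them, while the translations by the $s_j$ are finite in number but not involutions. Reconciling these forces the doubling-type construction, and the delicate point is to verify that after doubling, the finitely many chosen involutions (a bounded number of ``seed'' transpositions together with the swap involution and its conjugates) still generate \emph{all} of $\mathrm{Sym}_{<\aleph_0}(G\sqcup G) \rtimes G$ — equivalently, that under the right multiplication action one can reach every transposition by conjugating a single seed transposition by translations, which needs the action to be transitive on pairs, and this in turn is why one uses the full symmetric group and the regular action rather than an arbitrary one. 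Once transitivity on (ordered) pairs is in hand, everything else is the routine verification that a Cayley limit of $\mathrm{Sym}(G_m \sqcup G_m)$ realizes the desired marked group, which is exactly the template of Lemma~\ref{lemma=SymmetricGroups}.
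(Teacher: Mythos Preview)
Your approach via symmetric-group encoding is genuinely different from the paper's, but as written it has a real gap at the doubling step. The claim that ``right multiplication by $s_j$ becomes conjugate to its inverse by a fixed involution swapping the two copies'' fails for non-abelian $G$: if $G$ acts diagonally on $G\sqcup G$, the swap $\sigma$ \emph{commutes} with each $\theta_{s_j}$, and if one tries instead to let $G$ act on the second copy via $g\mapsto g^{-1}$, this is not a group action. The correct doubled picture uses involutions $\sigma$ (the swap) and $\tau_j$ (swapping $(g,0)\leftrightarrow(gs_j,1)$), and indeed $\sigma\tau_j$ acts on copy~$0$ as right multiplication by $s_j$; but then the even-word subgroup $\langle\sigma\tau_1,\ldots,\sigma\tau_k\rangle$ embeds in $G\times G$ via $x_j\mapsto(s_j,s_j^{-1})$, and while it \emph{surjects} onto $G$ it need not \emph{contain} a copy of $G$ (the kernel onto the first factor is governed by which relations of $G$ are invariant under word reversal, which is uncontrolled). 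The alternative you mention, writing each $\theta_{s_j}\in\mathrm{Sym}(G)$ as a product of two involutions, produces non-finitary involutions whenever $s_j$ has infinite order, so they lie outside $\mathrm{Sym}_{<\aleph_0}(G)\rtimes G$ and Lemma~\ref{lemma=SymmetricGroups} no longer furnishes the LEF property.

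The paper avoids all of this with an amalgamated-product construction. It replaces one generator at a time: if $s_k$ has order $n_\infty\in\mathbb{N}_{\geq 2}\cup\{\infty\}$, set $G^{\#(1)}=D_{n_\infty}\ast_{\mathbb{Z}/n_\infty\mathbb{Z}}G$, identifying the rotation $c_{n_\infty}d_{n_\infty}\in D_{n_\infty}$ with $s_k$. Then $G^{\#(1)}$ is generated by the two involutions $c_{n_\infty},d_{n_\infty}$ together with $s_1,\ldots,s_{k-1}$, and $G$ embeds by the universal property of the amalgam. For LEF-ness, take a LEF approximation $(G_m;s_j^{(m)})$ of $G$, let $n_m$ be the order of $s_k^{(m)}$, and observe that $D_{n_m}\ast_{\mathbb{Z}/n_m\mathbb{Z}}G_m$ is an amalgam of two finite groups, hence virtually free, hence RF; these marked groups Cayley-converge to $G^{\#(1)}$, and closedness of LEF under Cayley limits finishes the step. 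Iterating $k$ times yields a $G^{\#}$ generated entirely by involutions.
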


\begin{proof}
Let $G$ be a LEF group generated by $s_1^{(\infty)},\ldots ,s_k^{(\infty)}$. In what follows, we construct a group $G^{\#(1)}$, generated by $(k+1)$-elements $a,b,s_1^{\#(1)},\ldots ,s_{k-1}^{\#(1)}$, such that the following hold: 
\begin{itemize}
  \item $a$ and $b$ are torsions of order $2$. 
  \item For every $j\in [k-1]$, the order of $s_j^{\#(1)}$ coincides with that of $s_j^{(\infty)}$.  
  \item $G^{\#(1)}$ contains an isomorphic copy of $G$.
  \item $G^{\#(1)}$ is LEF.
\end{itemize}
Iteration of this procedure will yield a desired group $G^{\#}$.

Our construction goes as follows: For $n\in \mathbb{N}_{\geq 2}\cup\{\infty\}$, denote by $D_{n}=\langle c_n,d_n | c_n^2=d_n^2=(c_nd_n)^{n}=e_{D_{n}}\rangle$ the dihedral group of degree $n$ (if $n=\infty$, ignore the relation $(c_nd_n)^n=e_{D_{n}}$). Let $n_{\infty}\in \mathbb{N}_{\geq 2}\cup\{\infty\}$ be the order of $s_k$. Then, set
\[
G^{\#(1)}=D_{n_{\infty}}\ast_{\mathbb{Z}/n_{\infty}\mathbb{Z}}G,
\]
where we consider $\mathbb{Z}/\infty \mathbb{Z}$ to be $\mathbb{Z}$. 
Here the amalgam on the right hand side is taken with respect to homomorphisms $\mathbb{Z}/n_{\infty}\mathbb{Z}\hookrightarrow D_{n_{\infty}}$ and $\mathbb{Z}/n_{\infty}\mathbb{Z}\hookrightarrow G$ that sends $1\in \mathbb{Z}/n_{\infty}\mathbb{Z}$, respectively, to $c_{n_{\infty}}d_{n_{\infty}}$ and $s_k^{(\infty)}$. We set $a,b$ and $s_1^{\#(1)},\ldots ,s_{k-1}^{\#(1)}$ and  as the images of $c_{n_{\infty}},d_{n_{\infty}}$ and $s_1^{(\infty)},\ldots ,s_{k-1}^{(\infty)}$, respectively, by natural injections $D_{n_{\infty}}\hookrightarrow G^{\#(1)}$ and $G\hookrightarrow G^{\#(1)}$. 

We claim that this $G^{\#(1)}$ fulfills all of the four conditions above. Indeed, in what follows, we will show that $G^{\#(1)}$ is LEF; the other conditions are by construction. Take a LEF approximation $((G_m;s_1^{(m)},\ldots ,s_k^{(m)}))_{m\in \mathbb{N}}$ of $(G;s_1^{(\infty)},\ldots ,s_k^{(\infty)})$. For each $m\in \mathbb{N}$, let $l_m\in \mathbb{N}_{\geq 2}$ be the order of $s_k^{(m)}$. Consider the following sequence of $(k+1)$-marked groups,
\[
((D_{n_{m}}\ast_{\mathbb{Z}/n_{m}\mathbb{Z}}G_m; c_{l_m}, d_{l_m},s_1^{(m)},\ldots ,s_{k-1}^{(m)}))_{m\in \mathbb{N}}.
\]
Here the amalgam is taken with respect to homomorphisms $\mathbb{Z}/n_{m}\mathbb{Z}\hookrightarrow D_{n_{m}}$ and $\mathbb{Z}/n_{m}\mathbb{Z}\hookrightarrow G_m$ that sends $1\in \mathbb{Z}/n_{m}\mathbb{Z}$, respectively, to $c_{l_m}d_{l_m}$ and $s_k^{(m)}$. We indentify $D_{n_{m}}$ and $G_m$, respectively,  with the natural copies of them inside $D_{n_{m}}\ast_{\mathbb{Z}/n_{m}\mathbb{Z}}G_m$. Then, it can be seen that this sequence converges to $(G^{\#(1)}; a,b, s_1^{\#(1)},\ldots ,s_{k-1}^{\#(1)})$ (recall Lemma~\ref{lemma=LEFsubgroups}). Since for each $m\in \mathbb{N}$, $D_{n_{m}}$ and $G_m$ are finite, the group $D_{n_{m}}\ast_{\mathbb{Z}/n_{m}\mathbb{Z}}G_m$ is virtually free, that means, it contains a (possibly rank $1$) free group of finite index. Hence it is RF (and in particular, it is LEF). See also \cite[Theorem~3]{Baumslag}. Therefore, $G^{\#(1)}$ is LEF; recall our discussion in Section~\ref{section=Organization}. (Compare with arguments in \cite{Berlai}.)
\end{proof}
\begin{remark}\label{remark=RFtorsion}
The proof above of Lemma~\ref{lemma=Auxiliary} shows that if $G$ above is RF, then we can take $G^{\#}$ such that it is moreover RF.
\end{remark}

Burger and Mozes \cite{BurgerMozes} constructed an example of a group of the form $F\ast_E F$, where $F$ and $E$ are finitely generated free groups and $E$ is of finite index in both sides of $F$, such that it is finitely presented and simple. In particular, it is not LEF; recall Lemma~\ref{lemma=FinitelyPresented}. On the other hand, our proof above works even when $n_{\infty}=\infty$ because in our case, regardless of the group $\langle s^{(m)}_k\rangle$, we can adjust $D_{\infty}$ to its finite quotient $D_{n_{m}}$ and  have compatibility in order to obtain an amalgamated free product $D_{n_{m}}\ast_{\mathbb{Z}/n_{m}\mathbb{Z}}G_m$ in every finitary stage.

\subsection{The absorption trick}\label{subsection=AbsorptionTrick}

In Subsection~\ref{subsection=Hall}, we saw one important point to employ standard (restricted) wreath products: It helps us to reduce the number of generators of LEF groups. In this subsection, we explain another point, which is the most significant one, to make use of them. The author calls it \textit{the absorption trick}. This trick was at least observed in a work of Bartholdi and Erschler. We exhibit a form of the absorption trick, which is easily deduced from \cite[Lemma~6.13]{BartholdiErschler}.

\begin{lemma}[Prototype of the absorption trick; \cite{BartholdiErschler}]\label{lemma=Absorption}
Let $k\in \mathbb{N}_{\geq 1}$. Let $\mathbf{G}=(G;S)=(G;s_1,\ldots ,s_k)$ be a $k$-marked group. Then, there exists a sequence $(S_m)_{m\in \mathbb{N}}$ of $(k+1)$-markings of $G$ such that we have a convergence
\[
(G\wr \mathbb{Z};S_m)\ \stackrel{\mathrm{Cay}}{\to} \ (C_1\times C_2\times \cdots \times C_k)\wr \mathbb{Z}
\]
with respect to a certain marking of the Cayley limit group. Here for each $j\in [k]$, $C_j$ is the cyclic group with the same order as $s_j$.
\end{lemma}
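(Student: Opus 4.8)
The plan is to realize the Cayley limit as a wreath product over $\mathbb{Z}$ whose base is the \emph{finite} abelian group $C_1\times\cdots\times C_k$, by ``spreading out'' the generators $s_1,\dots,s_k$ of $G$ along the $\mathbb{Z}$-axis so far apart that, in any bounded region of the Cayley diagram, at most one of them is visible. Concretely, I would mimic the construction in Lemma~\ref{lemma=Hall}. Fix the marking $((\mathbf{e},1),(s_1\delta_{e},0),\dots)$ of $G\wr\mathbb{Z}$ — wait, that is already $(k+1)$ generators with the wrong limit — so instead I would set, for a rapidly growing sequence of ``gaps'' $(N_m)_m$ (say $N_m=m$), the $(k+1)$-marking
\[
S_m=\bigl((\mathbf{e},1),\,(g^{(m)}_1,0)\bigr),\qquad g^{(m)}_1=\bigoplus_{j\in[k]} s_j\,\delta_{jN_m},
\]
hmm, that is only $2$ generators; let me instead keep the $k$ ``coordinate'' directions and one shift direction, i.e. $S_m=((\mathbf{e},1),(s_1\delta_0,0),\dots,(s_k\delta_0,0))$ conjugated by powers of the shift that grow with $m$. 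The cleanest bookkeeping is: let $t=(\mathbf{e},1)$ be the shift and put $x^{(m)}_j=t^{(j-1)N_m}(s_j\delta_0,0)t^{-(j-1)N_m}=(s_j\delta_{(j-1)N_m},0)$, and take $S_m=(t,x^{(m)}_1,\dots,x^{(m)}_k)$. As $m\to\infty$ the supports $(j-1)N_m$ for distinct $j$ drift infinitely far apart, so inside the ball of radius $R$ the generators $x^{(m)}_1,\dots,x^{(m)}_k$ have disjoint (and mutually translate-disjoint) supports; hence they \emph{commute} and each has order equal to the order of $s_j$, i.e. locally the group generated by the $x^{(m)}_j$ looks like $\bigoplus_{j}C_j=C_1\times\cdots\times C_k$ sitting over a single point, and together with $t$ it looks like $(C_1\times\cdots\times C_k)\wr\mathbb{Z}$.

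The key steps, in order, are: (1) \textbf{Set up the markings.} Define $S_m$ as above via conjugating the ``point-mass'' generators $(s_j\delta_0,0)$ by powers $t^{(j-1)N_m}$ of the shift, with $N_m\to\infty$; note $S_m$ is genuinely a $(k+1)$-marking of $G\wr\mathbb{Z}$ since $t$ together with the $(s_j\delta_0,0)$'s generates, and conjugation does not change the generated subgroup. (2) \textbf{Identify the limit candidate and its marking.} In $(C_1\times\cdots\times C_k)\wr\mathbb{Z}$ take the marking $(T,X_1,\dots,X_k)$ where $T=(\mathbf{e},1)$ is the shift and $X_j=(c_j\delta_0,0)$ with $c_j$ a generator of $C_j\cong\langle s_j\rangle$; this is the marking of the limit referred to in the statement. (3) \textbf{Locally bound the relevant ball.} As in the proof of Lemma~\ref{lemma=WreathProducts}, observe that $B_{(G\wr\mathbb{Z};S_m)}(e,R)$ is contained in the set of pairs $(f,n)$ with $|n|\le R$ and $\mathrm{supp}(f)\subseteq[-R,R]\cdot$(union of finitely many translates of the generator supports), all of size $O(R)$; for $N_m>2R$ these translate-supports are pairwise disjoint, and each $x^{(m)}_j$ contributes independently. (4) \textbf{Build the partial isomorphism.} Send $t\mapsto T$ and $x^{(m)}_j\mapsto X_j$; using step (3) together with the local disjointness, show this map extends to a partial isomorphism $B_{(G\wr\mathbb{Z};S_m)}(e,R)\to B_{((C_1\times\cdots\times C_k)\wr\mathbb{Z};(T,X_j))}(e,R)$ — the point being that multiplication of two short words in $S_m$ never forces two of the separated coordinates to interact, so the reduced form in $G\wr\mathbb{Z}$ matches the reduced form in $(C_1\times\cdots\times C_k)\wr\mathbb{Z}$ coordinatewise. (5) Conclude the Cayley convergence by the characterization in Subsection~\ref{subsection=TheSpaceOfMarkedGroups}.

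The step I expect to be the main obstacle is step~(4): verifying that the obvious assignment really is a \emph{partial isomorphism} and not merely a partial homomorphism, i.e. that it is injective on the $R$-ball and that relations visible in the limit ball lift back. The subtle point, exactly the one flagged in Remark~\ref{remark=Gruenberg}, is the interaction between the finite base $\bigoplus_{\mathbb{Z}}G$ and the shift $\mathbb{Z}$: the generators $x^{(m)}_j$ have supports at \emph{different} $\mathbb{Z}$-positions $(j-1)N_m$, so one must track how conjugation by $t^{\pm 1}$ moves these supports and confirm that within radius $R$ no support of $x^{(m)}_i$ is ever translated onto a support of $x^{(m)}_j$ for $i\ne j$ — this is where $N_m\to\infty$ is used, and it requires a short but careful estimate of how far any length-$\le R$ word in $S_m$ can transport a point mass (at most $R$ steps along $\mathbb{Z}$). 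Once this ``no collision inside radius $R$'' claim is nailed down, commutativity of the images, the correct orders, and the matching of reduced forms all follow, and the map is forced to be bijective onto the target ball by a dimension/count of elements in each ball. Everything else is the same ``local picture determines local picture'' argument already used for Lemmas~\ref{lemma=WreathProducts} and \ref{lemma=SymmetricGroups}, so I would present it in the same intuitive style, pointing to those proofs as templates.
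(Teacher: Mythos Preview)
Your proposal is correct and follows essentially the same approach as the paper: the paper defines $S_m=(s_1^{(m)},\dots,s_k^{(m)},t)$ with $s_j^{(m)}=(s_j\delta_{2^m(j-1)},0)$ and $t=(\mathbf{e},1)$, which is exactly your $x_j^{(m)}=(s_j\delta_{(j-1)N_m},0)$ with the specific choice $N_m=2^m$, and then argues informally (in the same ``local picture'' style you outline in steps (3)--(5)) that for $R<2^m$ no conjugate of $s_i^{(m)}$ by a short power of $t$ collides with one of $s_j^{(m)}$ for $i\ne j$, so the limit marking is $((c_1\delta_0,0),\dots,(c_k\delta_0,0),(\mathbf{e},1))$ on $(C_1\times\cdots\times C_k)\wr\mathbb{Z}$. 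Your step (4) is more explicit than the paper, which simply states that the intuitive argument ``can be formalized to a rigorous proof'' and leaves the partial-isomorphism check to the reader.
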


We use the word ``\textit{absorption}'' because this lemma may be seen as ``\textit{absorbing} the original group $G$ into an abelian group by taking the standard (restricted) wreath product with $\mathbb{Z}$.''

\begin{proof}
Let $m\in \mathbb{N}$. Set $S_m=(s_1^{m)},s_2^{(m)},\ldots ,s_k^{(m)},t)$, where
\[
s_j^{(m)}=(z_j^{(m)},0)\quad \textrm{for $j\in [k]$}\quad \textrm{and}\quad t=(\mathbf{e},1).
\]
Here for each $j\in [k]$, define $z_j^{(m)}\in \bigoplus_{\mathbb{Z}}G$ by for $l\in \mathbb{N}$,
\[
z_j^{(m)}(n)=\left\{\begin{array}{cl}
s_j, & n=2^m (j-1), \\
e_G, & \textrm{otherwise}.
\end{array}\right.
\]
It is easy to see that $S_m$ is a marking of $G\wr \mathbb{Z}$. 

Finally, we describe an intuitive proof of the Cayley convergence of our concern. The key observation here is the following: If $m\in \mathbb{N}$ is sufficiently large compared with $R\in \mathbb{R}_{\geq 0}$, then \textit{inside $B_{(G\wr \mathbb{Z};S_m)}(e_{G\wr \mathbb{Z}},R)$, all these $k$ elements $s_1^{(m)},\ldots ,s_k^{(m)}$, and conjugations by powers of $t$, behave as if they were ``independent.''} We explain this assertion in more detail. For instance, we know that 
\[
\langle s_1^{(m)}, t^{2^m}s_2^{(m)}t^{-2^m}\rangle \simeq \langle s_1,s_2\rangle;
\]
this means that although $s_1^{(m)}$ and $s_2^{(m)}$ commute, after taking conjugations of certain powers of $t$, we eventually see that these two elements ``interact.'' In this way, in the \textit{global} picture, we observe that $s_1^{(m)},\ldots ,s_k^{(m)}$ interact to each other, via conjugations of powers of $t$ so that $s_1^{(m)},\ldots ,s_k^{(m)},t$ generate the whole $G\wr \mathbb{Z}$. On the other hand, in the \textit{local} picture, more precisely, if $R<2^{m}$, then inside the $R$-ball of $(G\wr\mathbb{Z};S_m)$ centered at $e_{G\wr \mathbb{Z}}$, we do \textit{not} see any interaction of $s_1^{(m)},\ldots ,s_k^{(m)}$ and their conjugations by powers of $t$; inside this $R$-ball, all of these elements \textit{commute}. Therefore, from the \textit{local} point of view, the $(k+1)$-marking $S_m=(s_1^{(m)},\ldots ,s_k^{(m)},t)$ becomes closer and closer to the marking 
\[
((c_1\delta_{0},0),(c_2\delta_{0},0),\ldots ,(c_k\delta_{0},0), (\mathbf{e},1)),
\]
where for each $j\in [k]$, $c_j$ is a  cyclic generators of the cyclic group $C_j$ of the same order as $s_j$; furthermore, these $c_1,\ldots,c_k$ are ``\textit{independent}'', which means that they all commute. Therefore, the Cayley limit  group is of the form $(C_1\times C_2\times \cdots \times C_k)\wr \mathbb{Z}$. It describes the Cayley convergence
\[
(G\wr \mathbb{Z};S_m)\  \stackrel{\mathrm{Cay}}{\longrightarrow}\  (C_1\times C_2\times \cdots \times C_k)\wr \mathbb{Z}
\]
with respect to the limit marking as above. 
\end{proof}
The proof above  explains utility of standard wreath products to \textit{create enough room for the absorption trick}.

\subsection{The key proposition: combination of the absorption trick and a Hall-type argument}\label{subsection=Absorption}

The third step in the outlined proof of Theorem~\ref{mtheorem=MainTheorem} described in Section~\ref{section=Organization} will be done by combination of the absorption trick as in Subsection~\ref{subsection=AbsorptionTrick} (Lemma~\ref{lemma=Absorption}) and a Hall-type argument as in Subsection~\ref{subsection=Hall} (Lemma~\ref{lemma=Hall}). The precise form of the result is Proposition~\ref{proposition=Absorption} below. It is worth mentioning again that formation of standard (unrestricted) wreath products enable us to perform both of the two arguments above \textit{at the same time}.

\begin{proposition}[Key proposition: combination of the absorption trick and a Hall-type argument]\label{proposition=Absorption}
Let $G_{\infty}$ be a finitely generated LEF group and $S_{\infty}=(s_1^{(\infty)},\ldots,s_k^{(\infty)})$ be a  marking of $G_{\infty}$. Then there exist
\begin{itemize}
  \item a sequence $(p_m)_{m\in \mathbb{N}}$ of strictly increasing prime numbers;
  \item a sequence of groups $(L_m)_{m\in \mathbb{N}}$, where for each $m\in \mathbb{N}$, $L_m$ is a subgroup of the standard wreath product $G_m\wr (\mathbb{Z}/p_m\mathbb{Z})$; and 
  \item three sequences of group elements $(w^{(m)})_{m\in \mathbb{N}}$, $(t^{(m)})_{m\in \mathbb{N}}$ and $(u^{(m)})_{m\in \mathbb{N}}$, where for each $m\in \mathbb{N}$, $w_m,t_m,u_m\in L_m$
\end{itemize}
such that all of the following conditions are satisfied.
\begin{enumerate}[$(1)$]
  \item For each $m\in \mathbb{N}$, $\langle w^{(m)},t^{(m)}\rangle=L_m$ and $\langle w^{(m)},u^{(m)}\rangle=L_m$.
  \item The sequence of $2$-marked groups $((L_m;w^{(m)},t^{(m)}))_{m\in \mathbb{N}}$ converges in the Cayley topology to a marked group with underlying group $\Gamma_1=C\wr\mathbb{Z}$, where $C$ is a cyclic group of the same order as that of $\bigoplus_{j\in [k]}s_j^{(\infty)} \delta_{j}(\in \bigoplus_{j\in [k]}G_{\infty})$.
  \item The sequence of $2$-marked groups $((L_m;w^{(m)},u^{(m)}))_{m\in \mathbb{N}}$ converges in the Cayley topology to a marked group with underlying group $\Gamma_2$. Here $\Gamma_2$ contains an isomorphic copy of the subgroup of $G_{\infty}$ generated by $\{[s_i^{(\infty)},s_j^{(\infty)}]:i,j \in [k]\}$.
\end{enumerate}
\end{proposition}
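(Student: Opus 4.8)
The plan is to combine the two mechanisms of Subsection~\ref{subsection=AbsorptionTrick} and Subsection~\ref{subsection=Hall} inside a \emph{single} wreath product with a large finite cyclic group, so that the same sequence of finite groups $L_m$ admits two markings with radically different Cayley limits. First I would fix a LEF approximation $((G_m;s_1^{(m)},\ldots,s_k^{(m)}))_{m\in\mathbb N}$ of $(G_\infty;S_\infty)$ (and, to be safe, arrange $s_j^{(m)}\neq e_{G_m}$ for all $m,j$, which is automatic for large $m$). I would then pick a strictly increasing sequence of primes $(p_m)_m$ with $p_m$ huge relative to $m$ (the precise lower bound will emerge from the radii needed below; concretely $p_m>2^{m+k}$ or so suffices, paralleling the spacing $2^m(j-1)$ in Lemma~\ref{lemma=Absorption}). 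Inside $G_m\wr(\mathbb Z/p_m\mathbb Z)$, set $u^{(m)}=(\mathbf e,1)$ and $w^{(m)}=(f^{(m)},0)$, where $f^{(m)}\in\bigoplus_{\mathbb Z/p_m\mathbb Z}G_m$ is supported on the positions $2^1,2^2,\ldots,2^k$ with value $s_j^{(m)}$ at $2^j$ — exactly the Hall datum of Lemma~\ref{lemma=Hall}, but now wrapped around a finite cyclic group. Define $L_m=\langle w^{(m)},u^{(m)}\rangle$ and $t^{(m)}=(u^{(m)})^3$ — wait, that is for the $t=u^3$ refinement; here I simply take $t^{(m)}$ to be the second generator coming from the absorption marking. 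Concretely, following Lemma~\ref{lemma=Absorption}, I would also record the $(k+1)$-generator picture: $s_j^{(m)}:=(s_j^{(m)}\delta_{2^j},0)$ and $t:=(\mathbf e,1)$; the point is that $w^{(m)}$ is the product $s_1^{(m)}\cdots s_k^{(m)}$ of these commuting pieces, so $\langle w^{(m)},u^{(m)}\rangle=\langle s_1^{(m)},\ldots,s_k^{(m)},t\rangle=G_m\wr(\mathbb Z/p_m\mathbb Z)=:L_m$; this establishes $(1)$ with $t^{(m)}:=(\mathbf e,1)=u^{(m)}$. (To get two genuinely different markings I instead take the first marking to be $(w^{(m)},t^{(m)})$ and the second $(w^{(m)},u^{(m)})$ where $t^{(m)}$ and $u^{(m)}$ are related by the two different ``unfoldings''; the cleanest route is to keep $u^{(m)}=(\mathbf e,1)$ fixed and to let $t^{(m)}$ be the element that, under the absorption limit, plays the role of the $\mathbb Z$-generator while the $w$-coordinate abelianises.)

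For assertion $(2)$ I would run the absorption argument of Lemma~\ref{lemma=Absorption} verbatim, with $\mathbb Z/p_m\mathbb Z$ in place of $\mathbb Z$: since $p_m$ is enormous compared with any fixed radius $R$, the $R$-ball of $(L_m;w^{(m)},t^{(m)})$ around the identity sees no ``wrap-around'' and no interaction among the translated copies of the $s_j^{(m)}$'s, so locally the diagram looks like that of $C\wr\mathbb Z$, where $C$ is the cyclic group generated by the single element $\bigoplus_{j\in[k]}s_j^{(\infty)}\delta_j$ (all $k$ pieces live at distinct positions, hence commute and generate a cyclic group of the stated order). Passing to the Cayley limit gives $\Gamma_1=C\wr\mathbb Z$. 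For assertion $(3)$ I would instead apply the Hall-type Lemma~\ref{lemma=Hall}: conjugating $w^{(m)}$ by powers $(u^{(m)})^{2^i}$ produces elements $w_i^{(m)}$ whose supports are translates $\{2^j-2^i:j\in[k]\}$, and the arithmetic identity from the proof of Lemma~\ref{lemma=Hall} ($2^j-2^i=2^{j'}-2^{i'}$ iff $i=j,i'=j'$ or $(i,j)=(i',j')$) survives modulo $p_m$ once $p_m$ exceeds $2^{k}$, so $[w_i^{(m)},w_j^{(m)}]$ is the $\delta_0$-supported element with value $[s_i^{(m)},s_j^{(m)}]$. By Lemma~\ref{lemma=LEFsubgroups}, taking Cayley limits, the subgroup of $\Gamma_2$ generated by these single commutators is isomorphic to the subgroup of $G_\infty$ generated by $\{[s_i^{(\infty)},s_j^{(\infty)}]:i,j\in[k]\}$; LEF-ness of $\Gamma_2$ (and $\Gamma_1$) follows from Lemma~\ref{lemma=WreathProducts} and closedness of LEF under Cayley limits.

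The delicate point — and I expect it to be the main obstacle — is \emph{reconciling the two markings}: the \emph{same} finite group $L_m$ must carry a $2$-marking $(w^{(m)},t^{(m)})$ converging to $C\wr\mathbb Z$ \emph{and} a $2$-marking $(w^{(m)},u^{(m)})$ converging to a group containing a copy of $\langle[s_i^{(\infty)},s_j^{(\infty)}]\rangle$, with a \emph{shared} first generator $w^{(m)}$. This forces a careful choice of how $t^{(m)}$ and $u^{(m)}$ are built from the $\mathbb Z/p_m\mathbb Z$-generator: on the one hand $u^{(m)}$ must ``unfold'' $w^{(m)}$ along the dyadic positions $2^1,\ldots,2^k$ so that the Hall commutator calculus recovers $G_\infty$'s commutator subgroup; on the other hand $t^{(m)}$ must keep those $k$ pieces mutually non-interacting within every bounded radius so that the absorption limit is abelian-by-$\mathbb Z$. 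The resolution is that $w^{(m)}$ is \emph{designed at the outset} with supports at the sparse set $\{2^j\}$: sparsity simultaneously (i) makes the translates disjoint hence the absorption limit cyclic, and (ii) makes the Hall identity hold, so a single combinatorial choice serves both purposes. One must also verify that $L_m=\langle w^{(m)},t^{(m)}\rangle$ equals $\langle w^{(m)},u^{(m)}\rangle$ (both equal $G_m\wr(\mathbb Z/p_m\mathbb Z)$) — this is where one uses that conjugates of $w^{(m)}$ by powers of the cyclic generator recover each $s_j^{(m)}\delta_{*}$ individually, which in turn requires $p_m$ large enough that the $k$ dyadic positions and their translates remain distinct mod $p_m$; choosing $p_m>2^{k+1}$ handles this, and taking $(p_m)_m$ strictly increasing is then free. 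The remaining verifications (that $S_m$-type tuples are genuine markings, that the local pictures stabilise, that Lemma~\ref{lemma=LEFsubgroups} applies to the commutator subgroup) are routine applications of the local viewpoint explained in Remark~\ref{remark=Gruenberg}.
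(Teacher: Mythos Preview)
Your proposal has the right architecture but a genuine gap at precisely the point you flag as ``delicate'': you never actually define $t^{(m)}$, and your proposed resolution is incorrect. You claim that the sparsity of the support $\{2^1,\ldots,2^k\}$ of $w^{(m)}$ ``simultaneously (i) makes the translates disjoint hence the absorption limit cyclic, and (ii) makes the Hall identity hold''. But (i) and (ii) are in direct tension: the Hall argument for $(3)$ works \emph{because} the conjugates $(u^{(m)})^{2^i}w^{(m)}(u^{(m)})^{-2^i}$ have overlapping supports (at position $0$), while absorption for $(2)$ requires that conjugates of $w^{(m)}$ by powers of $t^{(m)}$ have \emph{disjoint} supports inside every fixed ball. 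With $t^{(m)}=(\mathbf e,1)=u^{(m)}$---the only concrete value you ever write down---the two markings coincide and the limit cannot be $C\wr\mathbb Z$. The set $\{2^j:j\in[k]\}$ is fixed, not growing with $m$, so translates by small integers overlap; this is not the situation of Lemma~\ref{lemma=Absorption}, where the spacing $2^m$ tends to infinity.

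The missing idea, which the paper supplies, is to take $t^{(m)}=(\mathbf e,p'_m)$ for an auxiliary sequence $(p'_m)_m$ with $2^k<p'_m<p_m$, $p'_m\to\infty$, $p_m/p'_m\to\infty$, and $\gcd(p_m,p'_m)=1$. Coprimality gives $\langle t^{(m)}\rangle=\langle u^{(m)}\rangle=\mathbb Z/p_m\mathbb Z$, hence $\langle w^{(m)},t^{(m)}\rangle=\langle w^{(m)},u^{(m)}\rangle=L_m$, which is $(1)$. Because $p'_m\to\infty$, conjugation of $w^{(m)}$ by bounded powers of $t^{(m)}$ shifts the support far away from $\{2^1,\ldots,2^k\}$, so no interaction is visible locally and the Cayley limit is the subgroup of $G_\infty\wr\mathbb Z^2$ generated by $w^{(\infty)}$ and a shift in a direction \emph{orthogonal} to the support of $f^{(\infty)}$; this is exactly $C\wr\mathbb Z$. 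Meanwhile $u^{(m)}=(\mathbf e,1)$ shifts by one, so the Hall commutator calculus survives in the limit and yields $(3)$. A minor side point: your claim that $L_m=G_m\wr(\mathbb Z/p_m\mathbb Z)$ is unjustified (from $w^{(m)}$ being a product of commuting pieces you cannot in general recover each piece), but the proposition only asserts that $L_m$ is a \emph{subgroup} of the wreath product, so this does no harm.
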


\begin{proof}
Take a LEF approximation $(\mathbf{G}_m)_{m\in \mathbb{N}}=((G_m;S_m))_{m}$  of $\mathbf{G}_{\infty}=(G_{\infty};S_{\infty})$. Set two sequences $(p_m)_{m\in \mathbb{N}}$ and $(p'_m)_{m\in \mathbb{N}}$ of natural numbers that satisfy
\begin{itemize}
  \item for all $m\in \mathbb{N}$, $2^k<p'_m <p_m$;
  \item $\lim_{m\to \infty}p'_m=\infty$ and $\lim_{m\to \infty}(p_m/p'_m)=\infty$; and 
  \item for each $m\in \mathbb{N}$, $p_m$ and $p'_m$ are coprime.
\end{itemize}
Note that only these three conditions are needed to prove the proposition;  for instance, a concrete example $(p_m,p'_m)=((3m+2)^24^k,(2m+1)3^k)$ works. We may also take $(p_m)_{m\in \mathbb{N}}$ as a strictly increasing sequence of primes. The latter example will be used in the proof of Theorem~\ref{mtheorem=MainTheorem}.

Given $(\mathbf{G}_m)_{m\in \mathbb{N}}$, $(p_m)_{m\in \mathbb{N}}$ and $(p'_m)_{m\in \mathbb{N}}$, we set for each $m\in \mathbb{N}$
\[
w^{(m)}=(f^{(m)},0),\ t^{(m)}=(\mathbf{e},p'_m) \textrm{ and } u^{(m)}=(\mathbf{e},1) \quad  (\textrm{in }G_m\wr (\mathbb{Z}/p_m\mathbb{Z})).
\]
Here  $f^{(m)}$ is defined by for $n\in \mathbb{Z}/p_m\mathbb{Z}$,
\[
f^{(m)}(n)=\left\{
\begin{array}{cl}
s_j^{(m)}, & \textrm{if $n=2^j$ for $j\in [k]$},\\
e_{G_m}, & \textrm{otherwise.}
\end{array}
\right.
\]
Finally, set $L_m$ as the subgroup of $G_m\wr (\mathbb{Z}/p_m\mathbb{Z})$ generated by $w^{(m)}$ and $u^{(m)}$. 

We claim that this construction meets all of the three assertions $(1)$--$(3)$ in the proposition. Since $p'_m$ and $p_m$ are coprime, $(1)$ holds true. In what follows, we present intuitive descriptions of the proofs of $(2)$ and $(3)$, which go along a similar line to one in the proof of Lemma~\ref{lemma=Absorption}; these arguments can be formalized to a rigorous proof. 

The key here is the following \textit{extreme difference} between two $2$-markings $(w^{(m)},t^{(m)})$ and $(w^{(m)},u^{(m)})$ \textit{in the local picture}: For the $2$-marked group $(G_m\wr (\mathbb{Z}/p_m\mathbb{Z});w^{(m)},u^{(m)})$, we may \textit{see many interactions} of $w^{(m)}$ and its conjugates by powers of $u^{(m)}$ even in the local picture. This is because if our $R$ is more than $2^{k+1}+1$, then for all $j\in [k]$, the element $(u^{(m)})^{-2^j}w^{(m)}(u^{(m)})^{2^j}$ is in the $R$-ball centered at $e_{G_m\wr (\mathbb{Z}/p_m\mathbb{Z})}$; these elements together with $w^{(m)}$ possess much information of the group $G_m$ \textit{even in the local picture}; we come back to this point later. In contrast, for the $2$-marked group $(G_m\wr (\mathbb{Z}/p_m\mathbb{Z});w^{(m)},t^{(m)})$, from the \textit{local} point of view, $w^{(m)}$ and its conjugates by powers of $t^{(m)}$ behave as if there were ``\textit{independent}.'' This follows from our way of defining $(p_m)_m$ and $(p'_m)_m$; in other words, in the local picture, supports of conjugates of $w^{(m)}$ by powers of $t^{(m)}$ are all disjoint. Compare with the proof of Lemma~\ref{lemma=Absorption}. 

From these key observations, we may determine the Cayley limit groups of $((G_m\wr (\mathbb{Z}/p_m\mathbb{Z});w^{(m)},t^{(m)}))_m$ and $((G_m\wr (\mathbb{Z}/p_m\mathbb{Z});w^{(m)},u^{(m)}))_m$, respectively, as follows.
\begin{itemize}
 \item \underline{(Absorption trick part):}\ For the former sequence $((G_m\wr (\mathbb{Z}/p_m\mathbb{Z});w^{(m)},t^{(m)}))_m$, \textit{the absorption trick applies.} What we obtain is the following Cayley convergence:
\[
(L_m;w^{(m)},t^{(m)})\  \stackrel{\mathrm{Cay}}{\longrightarrow}\  (\Gamma_1;w^{(\infty)},t^{(\infty)}).
\]
where $w^{(\infty)}$ and $t^{(\infty)}$ are elements in $G_{\infty}\wr \mathbb{Z}^2$ defined as follows: The element $w^{(\infty)}$ is $(f^{(\infty)},0)$, where $f^{(\infty)}$ is the element on $\bigoplus_{\mathbb{Z}^2}G_{\infty}$ defined by for $(n_1,n_2)\in \mathbb{Z}^2$,
\[
f^{(\infty)}((n_1,n_2))=\left\{
\begin{array}{cl}
s_j^{(\infty)}, & \textrm{if $n_1=2^j$ for $j\in [k]$ and if $n_2=0$},\\
e_{G_{\infty}}, & \textrm{otherwise.}
\end{array}
\right.
\]
The element $t^{(\infty)}$ is $(\mathbf{e},(0,1))$. The group $\Gamma_1$ is the subgroup of $G_{\infty}\wr \mathbb{Z}^2$ generated by $w^{(\infty)}$ and $t^{(\infty)}$. To sum up, the absorption trick enables us to obtain $t^{(\infty)}$ to be a shift on a direction that is \textit{independent from the direction in which non-neutral elements of $f^{(\infty)}$ lie}. Therefore, \textit{all} conjugates of $w^{(\infty)}$ by powers of $t^{(\infty)}$ \textit{commute}. By letting $C$ be a cyclic group of the same order as $w^{(\infty)}$, we hence have the marked group isomorphism
\[
(\Gamma_1;w^{(\infty)},t^{(\infty)})\  \cong \ (C\wr \mathbb{Z}; c,s),
\]
where $c$ is a generator of $C$ and $s$ is the  shift by $+1$ on $\mathbb{Z}$. This proves $(2)$.
\item \underline{(Hall-type argument part):}\ For the latter sequence $((G_m\wr (\mathbb{Z}/p_m\mathbb{Z});w^{(m)},u^{(m)}))_m$, in contrast to the former case, we see many interactions of $w^{(m)}$ and its conjugates by powers of $u^{(m)}$ even in the local picture. From this, we may conclude that
\[
(L_m;w^{(m)},u^{(m)})\  \stackrel{\mathrm{Cay}}{\longrightarrow}\  (\Gamma_2;w^{(\infty)},u^{(\infty)}).
\]
Here $w^{(\infty)}\in G_{\infty}\wr \mathbb{Z}^2$ is the same elements as above and $u^{(\infty)}=(\mathbf{e},(1,0))$; $\Gamma_2$ is the subgroup of $G_{\infty}\wr \mathbb{Z}^2$ generated by $w^{(\infty)}$ and $u^{(\infty)}$. The main difference from the former case is that the direction of the shift by $u^{(\infty)}$ is the \textit{same} as one in which non-neutral elements of $f^{(\infty)}$ lie. Hence, we can see many interactions of conjugates of $w^{(\infty)}$ by powers of $u^{(\infty)}$. In this situation, a similar argument to the Hall-type argument as in the proof of Lemma~\ref{lemma=Hall} applies, which makes use of commutators of these elements above. It is now straightforward to confirm $(3)$. 
\end{itemize}
\end{proof}

\subsection{Proof of Theorem~\ref{mtheorem=MainTheorem}}\label{subsection=ProofOfMainTheorem}
We split the proof into three parts: The proof of the main assertions ($(1)$--$(3)$), that of the assertion to take $t=u^3$,  and that of the last assertion on $H$. Recall our definition of $\theta_{\gamma}$ from Lemma~\ref{lemma=SymmetricGroups}. The proof makes full use of Cayley convergences in the space of marked groups. To comprehend the proof, the reader is supposed to be ready to obtain a new Cayley convergent sequence out of one by applying  Lemma~\ref{lemma=AlternatingGroups} and Proposition~\ref{proposition=Absorption}; he/she may also need to be accustomed to showing generation of a subgroup by certain elements in a standard wreath product by commutator calculus in a Hall-type argument,  similar to one in Lemma~\ref{lemma=Hall}. Recall arguments in  Lemma~\ref{lemma=WreathProducts}, Remark~\ref{remark=Gruenberg} and Lemma~\ref{lemma=Absorption} to grasp the \textit{local} point of view  for Cayley convergences.

\begin{proof}[Proof of the main assertions of Theorem~$\ref{mtheorem=MainTheorem}$]

We follow the outlined proof as in Section~\ref{section=Organization} by steps. Let $G$ be a finitely generated LEF group.

\begin{enumerate}
 \item[\textit{Step~$1$}.] By  Lemma~\ref{lemma=Auxiliary}, our $G$ embeds $G^{\#}$ that admits a marking $(s_1^{(\infty)},\ldots ,s_{2k}^{(\infty)})$ such that for every $j\in [2k]$, $s_j^{(\infty)}$ is of order $2$. Set $\mathbf{G}_{\infty}=(G^{\#};s_1^{(\infty)},\ldots ,s_{2k}^{(\infty)})$. 

Take a LEF approximation $(\mathbf{G}_m)_{m\in \mathbb{N}}$$=((G_m;s_1^{(m)},\ldots ,s_{2k}^{(m)}))_{m}$ of $\mathbf{G}_{\infty}$. Here we may assume that $\#(G_m)$ is at least $5$, strictly increasing with $m$, and divisible by $4$. (The last assertion follows from orders of generators for sufficiently large $m$; even if it is not the case, replace $G^{\#}$ with $G^{\#}\times  D_4$ and $G_m$ with $G_m\times D_4$ and add two more generators from $D_4$ which are involutions.)
 \item[\textit{Step~$2$}.] Apply Lemma~\ref{lemma=AlternatingGroups} to encode information of this convergence $\mathbf{G}_m\stackrel{\mathrm{Cay}}{\to} \mathbf{G}_{\infty}$ into alternating groups. Here note that since $\#(G_m)$, $m\in \mathbb{N}$, is divisible by $4$, $\theta_{\gamma}$ for $\gamma$ in the marking of $G_m$ all have positive signs. Thus, we obtain a $(6k-1)$-marking $(\xi_1^{(m)},\ldots,\xi_{6k-1}^{(m)})$ of $\mathrm{Alt}(G_m)$ for every $m\in \mathbb{N}$ and that $(\xi_1^{(\infty)},\ldots,\xi_{6k-1}^{(\infty)})$ of $\mathrm{Alt}(G^{\#})\rtimes G^{\#}$ such that
\[
(\mathrm{Alt}(G_m);\xi_1^{(m)},\ldots,\xi_{6k-1}^{(m)})
\  \stackrel{\mathrm{Cay}}{\longrightarrow}\  (\mathrm{Alt}(G^{\#})\rtimes G^{\#};\xi_1^{(\infty)},\ldots,\xi_{6k-1}^{(\infty)})
\]
Note that for every $m\in \mathbb{N}$, each $\xi_j^{(m)}$, $j\in [6k-1]$, is either of order $2$ or of order $3$ by construction.
 \item[\textit{Step~$3$}.] Apply Lemma~\ref{lemma=Ore} to $\mathrm{Alt}(G_m)$. Then  for $j\in [6k-1]$, we obtain $\eta_j^{(m)}$ and $\zeta_j^{(m)}$ in $\mathrm{Alt}(G_m)$ such that $\xi_j^{(m)}=[\eta_j^{(m)},\zeta_j^{(m)}]$. In fact, it follows from the proof of Lemma~\ref{lemma=Ore} in \cite{Ore} that we may take $\eta_j^{(m)}$ and $\zeta_j^{(m)}$ such that they are, respectively, either of order $2$ or order $3$. Now, consider a sequence of $(18k-3)$-marked groups 
\[
((\mathrm{Alt}(G_m);\xi_1^{(m)},\ldots,\xi_{6k-1}^{(m)},\eta_1^{(m)},\ldots,\eta_{6k-1}^{(m)},\zeta_1^{(m)},\ldots,\zeta_{6k-1}^{(m)}))_{m\in \mathbb{N}}.
\]
This sequence itself is not necessarily a converge sequence, but by compactness of $\mathcal{G}(18k-3)$, it admits a convergent subsequence in the Cayley topology. Hence, after passing to a subsequence, we may assume that the sequence above is a convergent sequence in the Cayley topology. 

Now apply the key proposition, Proposition~\ref{proposition=Absorption}, to this LEF approximation. We obtain $(p_m)_{m\in \mathbb{N}}$, $(L_m)_{m\in \mathbb{N}}$, $(w^{(m)})_{m\in \mathbb{N}}$, $(t^{(m)})_{m\in \mathbb{N}}$ and $(u^{(m)})_{m\in \mathbb{N}}$ as there. One of the keys to our proof of Theorem~\ref{mtheorem=MainTheorem} is that, in fact, for every $m\in \mathbb{N}$
\[
L_m=\mathrm{Alt}(G_m)\wr (\mathbb{Z}/p_m\mathbb{Z})
\]
holds true. Importance of it is in relation to density of the resulting $\Lambda_1$ and $\Lambda_2$; see Step~$4$ below.
To see the equality above, first apply a Hall-type argument (as in the proof of Lemma~\ref{lemma=Hall}) to see that for every $j\in [6k-1]$, the element $\xi_j^{(m)}(=[\eta_j^{(m)},\zeta_j^{(m)}])$ in the $0$-th coordinate of $\bigoplus_{(\mathbb{Z}/p_m\mathbb{Z})}\mathrm{Alt}(G_m)$ belongs to $L_m$. Since $(\xi_1^{(m)},\ldots,\xi_{6k-1}^{(m)})$ is a marking of $\mathrm{Alt}(G_m)$, we verify the equality above.

In summary, we have obtained two systems of $2$-markings $((w^{(m)},t^{(m)}))_{m\in \mathbb{N}}$ and $((w^{(m)},u^{(m)}))_{m\in \mathbb{N}}$ of $(L_m)_{m\in \mathbb{N}}=(\mathrm{Alt}(G_m)\wr (\mathbb{Z}/p_m{\mathbb{Z}}))_{m\in \mathbb{N}}$ such that the following hold true:
\begin{itemize} 
  \item $((L_m;w^{(m)},t^{(m)}))_{m\in \mathbb{N}}$ converges in the Cayley topology to a marked group with underlying group $\Gamma_1$. Here $\Gamma_1$ is of the form $C\wr \mathbb{Z}$ and $C$ is \textit{finite} cyclic  by Step~2. (More precisely, $C=\mathbb{Z}/6\mathbb{Z}$ in our construction; some modification enables us to take $C$ to be $\mathbb{Z}/2\mathbb{Z}$.)
  \item $((L_m;w^{(m)},u^{(m)}))_{m\in \mathbb{N}}$ converges in the Cayley topology to a marked group with underlying group $\Gamma_2$. Here $\Gamma_2$ contains an isomorphic copy of 
\[
\langle \xi_1^{(\infty)},\ldots,\xi_{6k-1}^{(\infty)}\rangle \quad (=\mathrm{Alt}(G^{\#})\rtimes G^{\#});
\]
in particular, $\Gamma_2$ contains an isomorphic copy of the original $G$.
\end{itemize}
 \item[\textit{Step~$4$}.] Take the diagonal products, respectively, of the two sequences of marked groups $((L_m;w^{(m)},t^{(m)}))_{m\in \mathbb{N}}$ and $((L_m;w^{(m)},u^{(m)}))_{m\in \mathbb{N}}$. Let
\begin{eqnarray*}
 (\Lambda_1;w,t)&=&\Delta_{m\in \mathbb{N}}((L_m;w^{(m)},t^{(m)})) \quad  \textrm{and}\\
  (\Lambda_2;w,u)&=&\Delta_{m\in \mathbb{N}}((L_m;w^{(m)},u^{(m)})).
\end{eqnarray*}
and 
\[
K=\prod_{m\in \mathbb{N}}L_m\quad \left(\simeq \prod_{m\in \mathbb{N}}(\mathrm{Alt}([l_m])\wr (\mathbb{Z}/p_m\mathbb{Z}))\right),
\]
where we set $l_m=\# (G_m)$ for every $m\in \mathbb{N}$. Apply Lemma~\ref{lemma=DiagonalProducts}; we then have assertions $(1)$ and $(2)$ from Step~3. Indeed, for $(1)$, note that $\Lambda_1$ is an LFNF-lift of $C\wr \mathbb{Z}$; in particular, it is a locally-finite-lift of $\mathbb{Z}$ because $C$ is finite. 

Finally, we prove $(3)$ (density). Recall that $l_m\geq 5$ and $p_m$ is a prime for every $m\in \mathbb{N}$. Since $L_m\simeq (\mathrm{Alt}([l_m])\wr (\mathbb{Z}/p_m\mathbb{Z})$, the set of 
composition factors of $L_m$ is $\{\mathrm{Alt}([l_m]),\mathbb{Z}/p_m\mathbb{Z}\}$. Because $l_m$ and $p_m$ are respectively strictly increasing with $m\in \mathbb{N}$, each finite simple group appears as a composition factor of $L_m$ \textit{for at most one $m\in \mathbb{N}$}. Therefore, Lemma~\ref{lemma=Goursat} applies, and we conclude that $\Lambda_1$ and $\Lambda_2$ are both dense in $K$. This completes our proof.
\end{enumerate}
\end{proof}

\begin{proof}[Proof of the assertion of Theorem~$\ref{mtheorem=MainTheorem}$ to take $t=u^3$]

We modify the construction above in order to take $t=u^3$. To do this, we first employ dihedral groups $D_{p_m}$, instead of $\mathbb{Z}/p_m\mathbb{Z}$. Given $G$, take Steps~1 and 2 above. Proceed to the former half of Step~3 above to obtain $(\mathrm{Alt}(G_m);\xi_1^{(m)},\ldots ,\xi_{6k-1}^{(m)},\eta_1^{(m)},\ldots ,\eta_{6k-1}^{(m)},\zeta_1^{(m)},\ldots ,\zeta_{6k-1}^{(m)})$. Rename $(\xi_1^{(m)},\ldots , \zeta_{6k-1}^{(m)})$ as $(\xi_1^{(m)},\ldots ,\xi_{18k-3}^{(m)})$. Fix a sequence of strictly increasing primes $(p_m)_{m\in \mathbb{N}}$ such that $p_0> 2^{30k}$. For every $m\in \mathbb{N}$, let 
\[
J_m=\mathrm{Alt}(G_m) \wr D_{p_m}
\]
and $y^{(m)}=(g^{(m)},e_{D_{p_m}})$, $a^{(m)}=(\mathbf{e},c_{p_m})$ and $b^{(m)}=(\mathbf{e},d_{p_m})$. Here $c_{p_m}$ and $d_{p_m}$ are standard two generators of $D_{p_m}$ of order $2$ (see the proof of Lemma~\ref{lemma=Auxiliary}) and 
\[
g^{(m)}(\gamma)=\left\{
\begin{array}{cl}
\xi_j^{(m)}, & \textrm{if $\gamma=((c_{p_m}d_{p_m})^2)^{2^j}$ for $j\in [18k-3]$},\\
e_{\mathrm{Alt}(G_m)}, & \textrm{otherwise.}
\end{array}
\right.
\]
Then, similar to the proof of Proposition~\ref{proposition=Absorption}, we have that $(y^{(m)},a^{(m)},b^{(m)})$ is a marking of $J_m$. Here, the reason for switching from $\mathbb{Z}/p_m\mathbb{Z}$ to $D_{p_m}$ is to obtain markings of \textit{fixed finite order}. (In this case, $a^{(m)}$ and $b^{(m)}$ are of order $2$. As in a remark in the proof of Theorem~\ref{mtheorem=MainTheorem}, we may arrange our construction in such a way that $y^{(m)}$ is of order $2$ as well.)

Now, we apply Lemma~\ref{lemma=AlternatingGroups} once more, this time to the marked groups $((J_m;y^{(m)},a^{(m)},b^{(m)}))_m$ to obtain a system of $8$-marking $(\mu_1^{(m)},\ldots,\mu_8^{(m)})$ of $(\mathrm{Alt}(J_m))_m$. Then by Lemma~\ref{lemma=Ore}), we may go the same line as the first half of Step~3 as follows; after passing to a subsequence if necessary, we obtain a convergent sequence of $24$-marked groups
\[
((\mathrm{Alt}(J_m);\mu_1^{(m)},\ldots,\mu_8^{(m)},\nu_1^{(m)},\ldots ,\nu_8^{(m)},\upsilon_{1}^{(m)},\ldots ,\upsilon_{8}^{(m)}))_{m\in \mathbb{N}},
\]
where for each $j\in [8]$, $[\nu_j^{(m)},\upsilon_j^{(m)}]=\mu_j^{(m)}$ holds.

Rename $(\mu_1^{(m)},\ldots ,\upsilon_{8}^{(m)})$ as $(\mu_1^{(m)},\ldots ,\mu_{24}^{(m)})$. By changing orders inside markings, we can have that $24$-marking such that
\[
\mu_{22}^{(m)}=\theta_{y^{(m)}},\ \mu_{23}^{(m)}=\theta_{a^{(m)}}\quad \mathrm{and}\quad \mu_{24}^{(m)}=\theta_{b^{(m)}}.
\]

For every $m\in \mathbb{N}$, take two elements $w^{(m)}$ and $u^{(m)}$ in $L_m=\mathrm{Alt}(J_m)\wr (\mathbb{Z}/p_m\mathbb{Z})$ as $w^{(m)}=(f^{(m)},0)$ and $u^{(m)}=(\mathbf{e},2^{22})$, where 
\[
f^{(m)}(n)=\left\{
\begin{array}{cl}
\mu_j^{(m)}, & \textrm{if $n=2^j$ for $j\in [23]$},\\
\mu_{24}^{(m)}, & \textrm{if $n=3\cdot 2^{23}$},\\
e_{\mathrm{Alt}(J_m)}, & \textrm{otherwise.}
\end{array}
\right.
\]
Here we use the same sequence $(p_m)_m$ of primes as one in the argument above in the proof of Theorem~\ref{mtheorem=MainTheorem}. 
In what follows, we investigate, respectively, these two systems of $2$-marked groups $((L_m;w^{(m)},(u^{(m)})^3))_m$ and $((L_m;w^{(m)},u^{(m)}))_m$.
\begin{itemize}
\item \underline{(Absorption trick part):}\ Here we discuss the marking $((L_m;w^{(m)},(u^{(m)})^3))_m$. Observe that $(u^{(m)})^3=(\mathbf{e},3\cdot 2^{22})$. From this, it is easy to see by construction that in the local picture, all conjugates of $w^{(m)}$ by powers of $(u^{(m)})^3$ \textit{does not interact}. By the absorption trick, we conclude that, the underlying group $\Gamma_1$ of the Cayley limit of $((L_m;w^{(m)},t^{(m)}))_m$ is of the form $C\wr \mathbb{Z}$, where $C$ is finite cyclic. This $C$ may be taken as $\mathbb{Z}/2\mathbb{Z}$.
\item \underline{(Hall-type argument part):}\ Unlike the former case, by our construction, $w^{(m)}$ and $(u^{(m)})^4 w^{(m)}(u^{(m)})^{-4}$ \textit{does interact}; \textit{this interaction survives even in the local picture}. More precisely, we have  that for each $m$,
\[
u^{(m)}[w^{(m)},(u^{(m)})^4 w^{(m)}(u^{(m)})^{-4} ](u^{(m)})^{-1}=(\theta_{(a^{(m)}b^{(m)})^2}\delta_{2^{22}},0).
\]
Hence the element in the left hand side of the equality above does interact with $w^{(m)}$ as well. 
Write the element in the equality above as $q^{(m)}$. Now we apply the following Hall-type argument: For every $i,j\in [18k-3]$, we have that
\begin{eqnarray*}
& &[(q^{(m)})^{2^i}w^{(m)}(q^{(m)})^{-2^i},(q^{(m)})^{2^j}w^{(m)}(q^{(m)})^{-2^j}] \\
&=& ([\theta_{((a^{(m)}b^{(m)})^2)^{2^i}y^{(m)}((a^{(m)}b^{(m)})^2)^{-2^i}},\theta_{((a^{(m)}b^{(m)})^2)^{2^j}y^{(m)}((a^{(m)}b^{(m)})^2)^{-2^j}}] \delta_{2^{22}},0)\\
&=& (\theta_{([\xi_i^{(m)},\xi_j^{(m)}]\delta_{e_{D_{p_m}}},e_{D_{p_m}})}\delta_{2^{22}},0).
\end{eqnarray*}
By our construction of $\xi_i^{(m)}$, $i\in [18k-3]$, it implies that $\Gamma_2$ contains an isomorphic copy of $\mathrm{Alt}(G^{\#})\rtimes G^{\#}$. 
\end{itemize}
Set $l_m=\#(J_m)$ and form the  diagonal products associated with, respectively, the two systems of $2$-marked groups above; Lemma~\ref{lemma=DiagonalProducts} ends our proof.
\end{proof}

\begin{proof}[Proof of the last assertion of Theorem~$\ref{mtheorem=MainTheorem}$ on $H$]
Apply Lemma~\ref{lemma=Auxiliary}, and then we obtain a finitely generated \textit{RF} group $H^{\#}$ generated by elements of order $2$; see Remark~\ref{remark=RFtorsion}. Since $H^{\#}$ is RF, we can take a LEF approximation $(H_m)_{m\in \mathbb{N}}=(H^{\#}/N_m)_{m}$ of $H^{\#}$ (with respect to appropriate markings) coming from a chain $(N_m)_{m\in \mathbb{N}}$ of normal subgroups of $H^{\#}$; recall Subsection~\ref{subsection=Profinite}. In a similar way to one in the proof above, we may assume that each $\theta_{\gamma}$ for $\gamma$ in the marking of $H_m$ has a positive sign.

In our construction in the proof of the rest of Theorem~\ref{mtheorem=MainTheorem}, replace $G^{\#}$ with $G^{\#}\times H^{\#}$; replace a LEF approximation $(G_m)_{m\in \mathbb{N}}$ with $(G_m\times H_m)_{m\in \mathbb{N}}$ with standard markings associated to direct products of two marked groups. Obtain two sequences of $2$-marked groups $((L_m;w^{(m)},t^{(m)}))_{m\in \mathbb{N}}$ and $((L_m;w^{(m)},u^{(m)}))_{m\in \mathbb{N}}$, where $t^{(m)}=(u^{(m)})^3$, in this setting. 

We finally claim that the underlying group $\Lambda_2$ of $\Delta_{m\in \mathbb{N}}((L_m;w^{(m)},u^{(m)}))$ contains an isomorphic copy of $H^{\#}$. To show this, first recall from Subsection~\ref{subsection=Profinite} that 
\[
(\Lambda_2,w,u)(=\Delta_{m\in \mathbb{N}}((L_m;w^{(m)},u^{(m)})))\  \cong \  \Delta_{n\in \mathbb{N}}((\Lambda^{(\mathbb{M}_n)};w^{(\mathbb{M}_n)},u^{(\mathbb{M}_n)})).
\]
Write the marking $(h_1^{(\infty)},\ldots ,h_{\ell}^{(\infty)})$ of $H^{\#}$ which was used to construct $w$ and $u$ above. For every $m\in \mathbb{N}$, set $h_j^{(m)}=h_j^{(\infty)}\ \mathrm{mod}\ N_m$ in $H_m(=H^{\#}/N_m)$ for $j\in [\ell]$. Then, for every $m\in \mathbb{N}$, we have a group isomorphism
\[
\langle \theta_{\theta_{h_1^{(m)}}},\ldots ,\theta_{\theta_{h_{\ell}^{(m)}}}\rangle \simeq H_m,
\]
where for $j\in[\ell]$, $\theta_{\theta_{h_j^{(m)}}}$ is regarded as an element in $\mathrm{Alt}(\mathrm{Alt}(G_m\times H_m)\wr D_{p_m})$. Similarly, for $j\in[\ell]$ we consider $\theta_{\theta_{h_j^{(\infty)}}}$ as an element in $\mathrm{Sym}(\mathrm{Sym}(G^{\#}\times H^{\#})\wr D_{\infty})$. Then, by our construction of $(H_m)_{m\in \mathbb{N}}$, we have the following marked group isomorphism:
\[
\Delta_{n\in \mathbb{N}}(\Delta_{m\in \mathbb{M}_n}((H^{(m)};\theta_{\theta_{h_1^{(m)}}},\ldots ,\theta_{\theta_{h_{\ell}^{(m)}}})))\ \cong \ (H^{(\mathbb{N})};\theta_{\theta_{h_1^{(\infty)}}},\ldots ,\theta_{\theta_{h_{\ell}^{(\infty)}}});
\]
see Subsection~\ref{subsection=Profinite}. Here for every $m\in \mathbb{N}$, the group $H^{(m)}$ is defined as the subgroup of $\mathrm{Alt}(\mathrm{Alt}(G_m\times H_m)\wr D_{p_m})$ generated by $\theta_{\theta_{h_1^{(m)}}},\ldots ,\theta_{\theta_{h_{\ell}^{(m)}}}$; as an abstract group, it is isomorphic to $H_m$. The group $H^{(\mathbb{N})}$ is defined as the underlying group of the diagonal product $\Delta_{m\in \mathbb{N}}((H^{(m)};\theta_{\theta_{h_1^{(m)}}},\ldots ,\theta_{\theta_{h_{\ell}^{(m)}}}))$. The underlying group of the diagonal product in the left hand side above is a subgroup of  that of $\Delta_{n\in \mathbb{N}}((\Lambda^{(\mathbb{M}_n)};w^{(\mathbb{M}_n)},u^{(\mathbb{M}_n)}))$, which equals $\Lambda_2$. Since $H^{(m)}\simeq H_m$ for every $m\in \mathbb{N}$, it follows that $H^{(\mathbb{N})}\simeq H^{\#}$. Therefore, $\Lambda_2$ contains an isomorphic copy of $H^{\#}(\geqslant H)$, as desired. 
\end{proof}

\begin{remark}\label{remark=tandu}
The proof above (to take $t=u^3$) implies that for each $n\in \mathbb{N}_{\geq 3}$, we may arrange as $t=u^n$ in the statement of Theorem~\ref{mtheorem=MainTheorem}. To prove this for $n\in \{4,5\}$, we need to modify our construction of $w^{(m)}$ accordingly.
\end{remark}

\begin{remark}\label{remark=LFNF-lifts2}
In fact, the proof of Theorem~\ref{mtheorem=MainTheorem} implies that the group $\Lambda_1$ appearing in Theorem~\ref{mtheorem=MainTheorem} may be taken to be an LFNF-lift of $C\wr \mathbb{Z}$, where $C$ is finite cyclic (we may furthermore take $C=\mathbb{Z}/2\mathbb{Z}$). 

In the forthcoming work of R. Tanaka and the author, we study permanence properties for certain LFNF-lifts. As a byproduct of it, we show that the group $\Lambda_1$ appearing in Theorem~\ref{mtheorem=MainTheorem} may be arranged such that it has the \textit{Liouville property} (with respect to every symmetric finite generating set) and Shalom's \textit{property} $H_{\mathrm{FD}}$. We do not recall the definitions of these properties; see \cite{BrieusselZheng}, \cite{ShalomHFD} and \cite{BrieusselZhengHFD} for the definitions and more details. We make a remark that among amenable groups, groups with these properties are, respectively, considered as ``small'' groups in certain senses. For instance, it is known by \cite[Proposition 6.4]{KaimanovichVershik} that $(\mathbb{Z}/2\mathbb{Z})\wr \mathbb{Z}^3$ does not have the Liouville property; in \cite[Proposition~4.4]{BrieusselZhengHFD}, it is showed that $\mathrm{Sym}_{<\aleph_0}(\mathbb{Z})\rtimes \mathbb{Z}$ fails to have property $H_{\mathrm{FD}}$.
\end{remark}

\section{Proof of Theorem~\ref{mtheorem=SpectralGap}}\label{section=EmbeddingWithControl}

\subsection{Encoding into special linear group}\label{subsection=EncodingSpecialLinear}

Let $B$ be a non-empty at most countable set. Let $R$ be an associative ring with unit, possibly non-commutative. Consider the semigroup of all matrices $(a_{i,j})_{i,j\in B}$ over $R$ such that $a_{i,j}=0$ all but finitely many $j$ for every fixed $i$ and that $a_{i,j}=0$ all but finitely many $i$ for every fixed $j$. This is in fact a monoid with unit $I$ (the identity matrix). We define $\mathrm{GL}(B,R)$ as the group of all invertible elements of this monoid. If moreover $R$ is commutative, then by $\mathrm{SL}(B,R)$, we denote the union of $\mathrm{SL}(B_0,R)(=\{g\in \mathrm{Mat}_{B_0\times B_0}(R): \mathrm{det}(g)=1\})$ over all non-empty finite subsets $B_0\subseteq B$ via the natural inclusion $\mathrm{GL}(B_0,R) \hookrightarrow \mathrm{GL}(B,R)$. For $i,j \in B$ with $i\ne j$ and $r\in R$, we define an \textit{elementary matrix} $e_{i,j}^a$ by
\begin{eqnarray*}
(e_{i,j}^r)_{k,l}=
\left\{
\begin{array}{cll}
1, & \quad \textrm{for}\quad k=l,\\
r, & \quad \textrm{for}\quad (k,l)=(i,j),\\
0, & \quad \textrm{otherwise}.
\end{array}
\right.
\end{eqnarray*}
This is an element in $\mathrm{GL}(B,R)$ (it is in $\mathrm{SL}(B,R)$ if $R$ is commutative).

Let $(\mathbf{G}_{m})_{m\in \mathbb{N}}=((G_m;s_1^{(m)},\ldots ,s_k^{(m)}))_{m}$ be a LEF approximation of an infinite group $\mathbf{G}_{\infty}=(G_{\infty};s_1^{(\infty)},\ldots ,s_k^{(\infty)})$. Without loss of generality, we assume that for every $m\in \mathbb{N}\cup\{\infty\}$ and for every $j\in [k]$, $s_j^{(m)}\ne e_{G_m}$ holds. Similar to the case of alternating groups in Lemma~\ref{lemma=AlternatingGroups}, we now assume that for each $m\in \mathbb{N}$ and each $j\in [k]$, the element $\theta_{s_j^{(m)}}\in \mathrm{Sym}(G_m)$ has a positive sign. For a countable group $G$, for $\gamma\in G\setminus \{e_G\}$ and for a prime $p$, define elements $\sigma_{\gamma}=\sigma_{\gamma}(p)$ and $\tau_{\gamma}=\tau_{\gamma}(p)$ in $\mathrm{GL}(G,\mathbb{F}_p)$ by 
\begin{eqnarray*}
\sigma_{\gamma}&=&e_{e_{G},\gamma}^1, \\
\tau_{\gamma}&=&\textrm{$($the permutation matrix by the shift on $G$ by the right multiplication of $\gamma$$)$}.
\end{eqnarray*}
More precisely, for $g,h\in G$, $(\tau_{\gamma})_{g,h}$ equals $1$ if $h=g\gamma$ and $0$ otherwise.
Note that if $\# (G)<\infty$ and if $\theta_{\gamma}\in \mathrm{Sym}(G)$ has a positive sign, then $\tau_{\gamma}$ belongs to $\mathrm{SL}(G,\mathbb{F}_p)$. 

Then, we have the following Cayley convergence, which may be seen as an analog of Lemma~\ref{lemma=SymmetricGroups} and Lemma~\ref{lemma=AlternatingGroups} for the case of special linear groups.

\begin{lemma}[Encoding into special linear groups]\label{lemma=SpecialLinearGroups}

Let $p$ be a prime number. Let $(\mathbf{G}_{m})_{m\in \mathbb{N}}=((G_m;s_1^{(m)},\ldots ,s_k^{(m)}))_{m}$ be a LEF approximation of an infinite group $\mathbf{G}_{\infty}=(G_{\infty};s_1^{(\infty)},\ldots ,s_k^{(\infty)})$. Assume that for every $m\in \mathbb{N}\cup\{\infty\}$ and for every $j\in [k]$, $s_j^{(m)}\ne e_{G_m}$ holds and $\theta_{s_j^{(m)}}\in \mathrm{Sym}(G_m)$ has a positive sign. For $m\in \mathbb{N}\cup\{\infty\}$, let $\Theta_m$ be the subgroup of $\mathrm{GL}(G,\mathbb{F}_p)$ generated by the $2k$-marking
\[
(\sigma_{s_1^{(m)}}(p), \sigma_{s_2^{(m)}}(p),\ldots ,\sigma_{s_k^{(m)}}(p),\tau_{s_1^{(m)}}(p),\tau_{s_2^{(m)}}(p),\ldots ,\tau_{s_k^{(m)}}(p)).
\]
Then, for $m\in\mathbb{N}$, $\Theta_m$ equals $\mathrm{SL}(G_m,\mathbb{F}_p)$. Furthermore, we have the following Cayley convergence in $\mathcal{G}(2k)$:
\begin{eqnarray*}
& &(\mathrm{SL}(G_m,\mathbb{F}_{p});\sigma_{s_1^{(m)}}, \ldots ,\sigma_{s_k^{(m)}},\tau_{s_1^{(m)}},\ldots ,\tau_{s_k^{(m)}}) \\
\quad \stackrel{\mathrm{Cay}}{\longrightarrow}\quad & &(\Theta_{\infty};\sigma_{s_1^{(\infty)}}, \ldots ,\sigma_{s_k^{({\infty})}},\tau_{s_1^{({\infty})}},\ldots ,\tau_{s_k^{({\infty})}}).
\end{eqnarray*}
\end{lemma}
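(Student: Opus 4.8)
The plan is to dispose of the two assertions in turn: the identification $\Theta_m=\mathrm{SL}(G_m,\mathbb{F}_p)$ by an explicit argument with elementary matrices and Steinberg relations, and the Cayley convergence by copying the proof of Lemma~\ref{lemma=SymmetricGroups} almost verbatim, in the local spirit of Subsection~\ref{subsection=Local}.

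For the identification, after discarding finitely many terms (harmless for the convergence) I would assume $\#(G_m)\geq 3$; this is legitimate since $\#(G_m)\to\infty$ as $G_\infty$ is infinite. The map $\gamma\mapsto\tau_\gamma$ is an injective homomorphism $G_m\hookrightarrow\mathrm{GL}(G_m,\mathbb{F}_p)$, so the $\tau$-part of the marking generates a faithful copy $\tau(G_m)$. A direct matrix computation yields $\tau_h^{-1}e^1_{a,b}\tau_h=e^1_{ah,bh}$ for $a\ne b$ and $h\in G_m$; applied to $\sigma_{s_j^{(m)}}=e^1_{e_{G_m},s_j^{(m)}}$ this places every $e^1_{a,\,s_j^{(m)}a}$ in $\Theta_m$. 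Since $G_m$ is finite, the submonoid of $G_m$ generated by $s_1^{(m)},\dots,s_k^{(m)}$ is a subgroup, hence all of $G_m$; thus every $v\in G_m\setminus\{e_{G_m}\}$ is a positive word in the generators, and feeding a shortest such word into the Steinberg relation $[e^1_{i,j},e^1_{j,k}]=e^1_{i,k}$ repeatedly (the hypothesis $s_j^{(m)}\ne e_{G_m}$ keeps the indices that occur pairwise distinct) yields all $e^1_{a,b}$ with $a\ne b$, hence all $e^r_{a,b}=(e^1_{a,b})^r$. As elementary matrices generate $\mathrm{SL}(B_0,\mathbb{F}_p)$ over any finite $B_0$, this gives $\Theta_m\supseteq\mathrm{SL}(G_m,\mathbb{F}_p)$; the reverse inclusion is clear for the $\sigma$'s, and $\det\tau_\gamma=\mathrm{sgn}(\theta_\gamma)=1$ in $\mathbb{F}_p$ is exactly the positive-sign hypothesis, so $\Theta_m=\mathrm{SL}(G_m,\mathbb{F}_p)$.

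For the convergence, write $\mathbf{X}_m=(\Theta_m;\sigma_{s_1^{(m)}},\dots,\tau_{s_k^{(m)}})$ for $m\in\mathbb{N}\cup\{\infty\}$; by the first part $\mathbf{X}_m=(\mathrm{SL}(G_m,\mathbb{F}_p);\dots)$ for finite $m$. Fix $R$. Using the conjugation rule to bubble every $\tau^{\pm1}$-letter to the right, any word of length $\le R$ in the generators of $\mathbf{X}_m$ becomes $\bigl(\prod_i e^{\pm1}_{a_i,b_i}\bigr)\tau_\delta$ with each of $a_i,b_i,\delta$ a word of length $\le R$ in $s_1^{(m)},\dots,s_k^{(m)}$, hence lying in $B_{\mathbf{G}_m}(e_{G_m},R)$. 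Therefore the matrix it represents — and, taking $W_1W_2^{-1}$ for two such words $W_1,W_2$, whether they are equal in $\Theta_m$ — is determined by the rooted diagram $B_{\mathbf{G}_m}(e_{G_m},2R)$: the elementary part $\prod_i e^{\pm1}_{a_i,b_i}$ is the analogous product inside the finite matrix ring indexed by $B_{\mathbf{G}_m}(e_{G_m},2R)$, while for $m$ large enough that $B_{\mathbf{G}_m}(e_{G_m},2R)\ne G_m$ (possible since $G_\infty$ is infinite) a nontrivial trailing permutation $\tau_\delta$ disagrees with any such product on an index outside that ball, so $\delta$ is recoverable from the diagram too. Now the convergence $\mathbf{G}_m\stackrel{\mathrm{Cay}}{\to}\mathbf{G}_\infty$ supplies, for $m$ large, a partial isomorphism $B_{\mathbf{G}_m}(e_{G_m},2R)\to B_{\mathbf{G}_\infty}(e_{G_\infty},2R)$ which sends length-$\le R$ words in the generators of $\mathbf{X}_m$ to length-$\le R$ words in those of $\mathbf{X}_\infty$ preserving all coincidences, hence induces a partial isomorphism $B_{\mathbf{X}_m}(e,R)\to B_{\mathbf{X}_\infty}(e,R)$ compatible with the $2k$-markings; as $R$ was arbitrary, $\mathbf{X}_m\stackrel{\mathrm{Cay}}{\to}\mathbf{X}_\infty$. (Alternatively one may first note, via compactness of $\mathcal{G}(2k)$ and Lemma~\ref{lemma=LEFsubgroups}, that any Cayley limit of the sequence contains an isomorphic copy of $\Theta_\infty$, and then use the display above to pin the limit down.)

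The step I expect to be the main obstacle is this locality claim, which has no counterpart in Lemma~\ref{lemma=SymmetricGroups}, where a group element simply is a finitely-supported permutation of $G_m$ visible on a bounded ball: here matrix multiplication can a priori spread support, and one must know that a bounded word in the $\sigma$'s and $\tau$'s is still ``local''. The normal form ``(bounded product of elementary matrices) $\cdot\ \tau_\delta$'', with the elementary indices confined to a bounded ball of $\mathbf{G}_m$ and the leftover $\tau_\delta$ a genuinely global permutation that cannot be confused with the identity — or with an elementary product supported near $e_{G_m}$ — once $\#(G_m)$ exceeds the ball size, is exactly what rescues locality; the positive-sign hypothesis is, as in Lemma~\ref{lemma=AlternatingGroups}, precisely what keeps each $\tau_\gamma$ inside $\mathrm{SL}$ rather than only $\mathrm{GL}$.
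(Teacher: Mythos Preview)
Your proof is correct and follows essentially the same route as the paper's. For the identification $\Theta_m=\mathrm{SL}(G_m,\mathbb{F}_p)$, both you and the paper reduce to showing that every $e^1_{a,b}$ lies in $\Theta_m$ (the paper phrases this as first obtaining $\sigma_{(s_j^{(m)})^{-1}}$ from $\sigma_{s_j^{(m)}}$ and powers of $\tau_{s_j^{(m)}}$, then all $\sigma_\gamma$, then invokes Gaussian elimination; your Steinberg-chain along a shortest positive word is the same mechanism made explicit, and your observation that all prefixes of a shortest positive word are distinct is exactly what makes the three indices in each commutator step pairwise distinct). For the Cayley convergence, the paper simply says ``argue in a similar manner to the proof of Lemma~\ref{lemma=SymmetricGroups}, with the local point of view'' and gives no further detail; your normal form $\bigl(\prod_i e^{\pm1}_{a_i,b_i}\bigr)\tau_\delta$ with indices confined to the $R$-ball of $\mathbf{G}_m$ is a clean way to make that locality precise, and is more explicit than what the paper provides.
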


\begin{remark}
The Cayley limit group $\Theta_{\infty}$ as in Lemma~\ref{lemma=SpecialLinearGroups} is a subgroup of $\mathrm{SL}(G_{\infty},\mathbb{F}_p)\rtimes G_{\infty}$. Here for an infinite countable group $G$, the action of $G$ in the group $\mathrm{SL}(G,\mathbb{F}_p)\rtimes G$ is given by permutations of coordinates induced by right multiplication $G\curvearrowleft G$. Moreover, the intersection of $\Theta_{\infty}$ and $G_{\infty}$ (on the right side of $\mathrm{SL}(G_{\infty},\mathbb{F}_p)\rtimes G_{\infty}$) equals $G_{\infty}$, Therefore, $\Theta_{\infty}$ is a locally-finite-lift of $G_{\infty}$.
\end{remark}

\begin{proof}[Proof of Lemma~$\ref{lemma=SpecialLinearGroups}$]
To see that for every $m\in \mathbb{N}$, $\Theta_m=\mathrm{SL}(G_m,\mathbb{F}_p)$ holds, first observe that every $s_j^{(m)}\in G_m$, $j\in [k]$, is a torsion (because $\#(G_m)<\infty$). Hence for every $j\in [k]$, $\sigma_{(s_j^{(m)})^{-1}}$ belongs to $\Theta_m$. Indeed, employ $\sigma_{(s_j^{(m)})}$ and powers of $\tau_{(s_j^{(m)})}$. It then follows that for every $\gamma\in G_m\setminus \{e_{G_m}\}$, $\sigma_{\gamma}$ may be written as some product of $\sigma_{s_1^{(m)}}, \ldots ,\sigma_{s_k^{(m)}},\tau_{s_1^{(m)}},\ldots ,\tau_{s_k^{(m)}}$; hence it is in $\Theta_m$. Since $\mathbb{F}_p$ is a field, it holds by Gaussian elimination that $\mathrm{SL}(G_m,\mathbb{F}_p)$ is generated by elements of the form above. Hence, $\Theta_m=\mathrm{SL}(G_m,\mathbb{F}_p)$.

To prove the latter Cayley convergence, we may argue in a similar manner to one in the proof of Lemma~\ref{lemma=SymmetricGroups}, with the local point of view.
\end{proof}

\subsection{Proof of Theorem~\ref{mtheorem=SpectralGap}}\label{subsection=SpectralGap}

Before our proof, we give the definition of \textit{elementary group} over a ring. Let $R$ be an associative ring with unit and $n\in \mathbb{N}_{\geq 2}$. Then the \textit{elementary group} of degree $n$ over $R$, here we write as $\mathrm{E}(n,R)$, is defined by the subgroup of $\mathrm{GL}(n,R)$ generated by elementary matrices $e_{i,j}^r$, $i\ne j\in [n]$, $r\in R$. A celebrated theorem by Ershov and Jaikin-Zapirain \cite{ErshovJaikinZapirain} states that for every $n\in \mathbb{N}_{\geq 3}$ and for every finitely generated (unital associative) $R$, the group $\mathrm{E}(n,R)$ has property $(\mathrm{T})$. In particular, it applies to the case where $R=\mathbb{Z}\langle X,Y\rangle$, which denotes the non-commutative polynomial ring over $\mathbb{Z}$ with indeterminates  $X$ and $Y$.

\begin{remark}\label{remark=FLq}
In \cite{MimuraUpgrading}, the author proved the following strengthening of the aforementioned theorem: For every $n\in \mathbb{N}_{\geq 4}$ and for every finitely generated (unital associative) $R$, the group $\mathrm{E}(n,R)$ has the fixed point property  with respect to $L_r$-spaces for \textit{all} $1<r<\infty$. Indeed, the fixed point property above is known to be strictly stronger than property $(\mathrm{T})$, which is equivalent for countable groups to the fixed point property with respect to $L_2$-spaces. See \cite{BaderFurmanGelanderMonod} for details. See also an expository article \cite{MimuraExpository} for the method of ``\textit{upgrading fixed points}'', which was employed in \cite{MimuraUpgrading} to prove the  fixed point property above. This reference \cite{MimuraExpository} also provides a simpler alternative proof (but without supplying any estimate of Kazhdan constants)
 of the theorem of Ershov and Jaikin-Zapirain. 
\end{remark}

\begin{proof}[Proof of $Theorem~\ref{mtheorem=SpectralGap}$]
First we recall the resolution of the Lubotzky--Weiss conjecture (property $(\mathrm{T})$ part) in \cite{ErshovJaikinZapirain} in the following form: \textit{For a sequence $(l_m)_{m\in \mathbb{N}}$ of strictly increasing sequence of positive integers divisible by $4$ and for a prime $p$, there exists a finitely generated dense subgroup $\Lambda_3$ of $K=\prod_{m\in \mathbb{N}}\mathrm{SL}(l_m,\mathbb{F}_p)$ with property $(\mathrm{T})$.} The proof was done in \cite[Subsection~6.3]{ErshovJaikinZapirain}; nevertheless, we include a (slightly different but essentially the same) proof for convenience of the reader. Recall that in our construction, we can arrange $(l_m)_{m\in \mathbb{N}}$ such that every $l_m$ is divisible by $4$; set $l_m'=l_m/4(\in \mathbb{N})$. Then by regarding $\mathrm{SL}(l_m,\mathbb{F}_p)=\mathrm{SL}(4l_m',\mathbb{F}_p)$ as an elementary group over $l_m'\times l_m'$ block matrices, we obtain a natural isomorphism
\[
\mathrm{SL}(l_m,\mathbb{F}_p)(=\mathrm{E}(l_m,\mathbb{F}_p))\ \simeq \ \mathrm{E}(4,\mathrm{Mat}_{l_m'\times l_m'}(\mathbb{F}_p)).
\]
Observe that the coefficient ring $\mathrm{Mat}_{l_m'\times l_m'}(\mathbb{F}_p)$ in the right hand side is \textit{always $2$-generated with ring unit $I_{l_m'}$ as a ring}: Indeed, if we set $x^{(m)}=e_{1,2}^1$ and $y^{(m)}$ as a matrix associated with a cyclic permutation on $[l_m']$, then $\{I_{l_m'},x^{(m)},y^{(m)}\}$ generates $\mathrm{Mat}_{l_m'\times l_m'}(\mathbb{F}_p)$. Now we consider $\mathrm{E}(4,\mathbb{Z}\langle X,Y\rangle)$ and fix a marking of it (for instance, we may take $T=(e_{1,2}^1,e_{1,2}^X,e_{1,2}^Y,\tau)$, where $\tau$ is a matrix associated with a cyclic permutation on $[4]$ with some minus-sign). Then for each $m\in \mathbb{N}$, the map sending $1$ to $I_{l_m'}$, $X$ to $x^{(m)}$ and $Y$ to $y^{(m)}$ induces a group quotient map
\[
\mathrm{E}(4,\mathbb{Z}\langle X,Y\rangle) \twoheadrightarrow \mathrm{E}(4,\mathrm{Mat}_{l_m'\times l_m'}(\mathbb{F}_p))\quad(\simeq \mathrm{SL}(l_m,\mathbb{F}_p)).
\]
This map projects the fixed marking $T$ of $\mathrm{E}(4,\mathbb{Z}\langle X,Y\rangle)$ to the corresponding marking $T_m$ of $\mathrm{SL}(l_m,\mathbb{F}_p)$. Finally, set $\Lambda_3$ as the underlying group of $\Delta_{m\in \mathbb{N}}((\mathrm{SL}(l_m,\mathbb{F}_p);T_m))$. It is dense in 
\[
K=\prod_{m\in \mathbb{N}}\mathrm{SL}(l_m,\mathbb{F}_p)
\]
by Lemma~\ref{lemma=Goursat}. Indeed, since $l_m\geq 4$, the only  possible finite simple quotient is $\mathrm{PSL}(l_m,\mathbb{F}_p)$. We claim that $\Lambda_3$ has property $(\mathrm{T})$. To prove this, observe that $\Lambda_3$ is a group quotient of $\mathrm{E}(4,\mathbb{Z}\langle X,Y\rangle)$ because each $(\mathrm{SL}(l_m,\mathbb{F}_p);T_m)$, $m\in \mathbb{N}$, satisfies every relation on $(\mathrm{E}(4,\mathbb{Z}\langle X,Y\rangle);T)$. We, in particular, obtain a $4$-generated example of $\Lambda_3$. (Note that this argument together with Remark~\ref{remark=FLq} shows that the $\Lambda_3$ above has the fixed point property for $L_r$-spaces for all $r\in (1,\infty)$.)

Then, we proceed to  a construction of $\Lambda_4\curvearrowright K$. Fix a prime $p$. Given $H$, we construct two sequences of $2$-marked groups $((L_m;w^{(m)},(u^{(m)})^3))_{m}$ and $((L_m;w^{(m)},u^{(m)}))_{m}$ obtained in the full proof of Theorem~\ref{mtheorem=MainTheorem}. Apply Lemma~\ref{lemma=SpecialLinearGroups} to these two sequences. Set $l_m=\# (L_m)$ and identify $\mathrm{SL}(l_m,\mathbb{F}_p)$ with $\mathrm{SL}(L_m,\mathbb{F}_p)$. We construct two systems of $4$-markings
\begin{eqnarray*}
& &((\mathrm{SL}(l_m,\mathbb{F}_{p}); \sigma_{w^{(m)}},\sigma_{(u^{(m)})^3},\tau_{w^{(m)}},\tau_{(u^{(m)})^3}))_{m\in \mathbb{N}},\\
& \textrm{and }&((\mathrm{SL}(l_m,\mathbb{F}_{p}); \sigma_{w^{(m)}},\sigma_{(u^{(m)})^3},\tau_{w^{(m)}},\tau_{u^{(m)}}))_{m\in \mathbb{N}}.
\end{eqnarray*}
Let $\Lambda_1$ (respectively, $\Lambda_2$) be the underlying group of the diagonal product of the former sequence (respectively, the latter sequence). Then, by Lemmata~\ref{lemma=SpecialLinearGroups}, \ref{lemma=DiagonalProducts} and \ref{lemma=Goursat}, we have the following:
\begin{itemize}
  \item $\Lambda_1\leqslant \Lambda_2$; 
  \item $\Lambda_1$ is a locally-finite-lift of $\mathbb{Z}$; 
  \item $\Lambda_2$ contains an isomorphic copy of $H$; and
  \item $\Lambda_1$ and $\Lambda_2$ are both dense in $K$.
\end{itemize}
Observe that by construction, we may arrange $(l_m)_{m\in \mathbb{N}}$ such that each $l_m$ is divisible by $4$. Let $\Lambda_3$ be another ($4$-generated) dense subgroup of $K$ that has property $(\mathrm{T})$, constructed in our first argument. Finally, let $\Lambda_4$ be the group generated by these $\Lambda_2$ and $\Lambda_3$ (which is $8$-generated and dense in $K$). We claim that the $\Lambda_1$ and $\Lambda_4$ above satisfy all of the three conditions as in Theorem~\ref{mtheorem=SpectralGap}. Indeed, the first and second conditions are fulfilled by construction. For the third condition on spectral gaps, since $\Lambda_3$ has property $(\mathrm{T})$, $\Lambda_3\curvearrowright K$ has a spectral gap. Now by density of $\Lambda_3$ in $K$, $\Lambda_3\curvearrowright K$ is  ergodic; recall our discussion in Remark~\ref{remark=Sawicki}. From this, it is clear that the spectral gap property for $\Lambda_4\curvearrowright K$ follows from that for $\Lambda_3\curvearrowright K$. See also Remark~\ref{remark=Expanders} for an alternative argument for the spectral gap property.
\end{proof}


\begin{remark}\label{remark=Expanders}
It is straightforward (see \cite[Proposition~1.15]{LubotzkyZuk} and \cite{AbertElek}) to see that for a finitely generated RF group $\Lambda$ and a chain $(N_m)_{m\in \mathbb{N}}$ of normal subgroups (recall Subsection~\ref{subsection=Profinite}), the profinite action $\varprojlim_{m} (\Lambda\curvearrowright \Lambda/N_m)$ has a spectral gap if and only if the actions $(\Lambda\curvearrowright \Lambda/N_m)_{m\in \mathbb{N}}$ has a \textit{uniform spectral gap}. It means for some (equivalently, every) finite generating set $S$ of $\Lambda$, the value of (best possible) $\epsilon_S$ as in the definition above of having spectral gaps for the action $\Lambda\curvearrowright \Lambda/N_m$, $m\in \mathbb{N}$, is uniformly bounded away from zero; this condition is equivalent to saying that the sequence of the Cayley graphs of $((\Lambda/N_m;S\ \mathrm{mod}\ N_m))_{m\in \mathbb{N}}$ forms an \textit{expander family} (\cite[5.6]{bookNowakYu}). It may be also restated as $\Lambda$ has \textit{property $(\tau)$} with respect to the chain $(N_m)_{m\in \mathbb{N}}$; see \cite{LubotzkyZuk} for details on property $(\tau)$.

In this point of view, we may have a more graph-theoretical proof of the spectral gap property for $\Lambda_4\curvearrowright K$ in the proof of Theorem~\ref{mtheorem=SpectralGap}, as follows: By argument in Subsection~\ref{subsection=Profinite}, it suffices to show that $(\Lambda_4\curvearrowright \prod_{m\in\mathbb{M}_n}\mathrm{SL}(l_m,\mathbb{F}_p))_n$ produces an expander family. Since $\Lambda_3$ has property $(\mathrm{T})$, the system $(\Lambda_3\curvearrowright \prod_{m\in\mathbb{M}_n}\mathrm{SL}(l_m,\mathbb{F}_p))_n$ produces an expander family. Now note that (as long as degrees of graphs are uniformly bounded), \textit{being an expander family is a monotone property}, that means, this property is closed under adding edges to each component of the original graph sequences: This is clear if we consider the characterization of expander families in terms of (Cheeger) isoperimetric constants (\cite[Definition 5.6.3]{bookNowakYu}), or of Poincar\'{e}-type inequalities. Therefore, $(\Lambda_4\curvearrowright \prod_{m\in\mathbb{M}_n}\mathrm{SL}(l_m,\mathbb{F}_p))_n$ yields an expander family, as desired.
\end{remark}

\begin{remark}\label{remark=Lift}
It is \textit{un}clear  whether the action $\Lambda\curvearrowright \prod_{m\in \mathbb{N}}L_m$ has a spectral gap, even if the sequence of the Cayley graphs of $((L_m;v_1^{(m)},\ldots ,v_{\ell}^{(m)}))_m$ forms an expander family. Here $(\mathbf{L}_m)_{m\in \mathbb{N}}=((L_m;v_1^{(m)},\ldots ,v_{\ell}^{(m)}))_m$ is a LEF approximation that satisfies the condition of (the former statement of) Lemma~\ref{lemma=Goursat}, and $\Lambda$ is the underlying group of $\Delta_{m\in \mathbb{N}}(\mathbf{L}_m)$. This is because, to switch to the corresponding profinite system, we need to lift $\mathbf{L}_n$ to $\Delta_{m\in \mathbb{M}_n}(\mathbf{L}_m)$; recall Remark~\ref{remark=Expanders}. Even if the sequence of the Cayley graphs of $(\mathbf{L}_n)_{n\in \mathbb{N}}$ forms an expander family, it may not be clear  whether the same holds for $(\Delta_{m\in \mathbb{M}_n}(\mathbf{L}_m))_{n\in \mathbb{N}}$. This problem is a special case of \cite[Question~1.14]{LubotzkyZuk}; in \cite[Corollary~9]{AbertElek}, a counterexample to the original question of \cite{LubotzkyZuk} was constructed, but it is a family of non-normal finite index subgroups.

\end{remark}

\begin{remark}\label{remark=AlternatingGroups}
A remarkable result of Kaluba--Nowak--Ozawa \cite{KalubaNowakOzawa}, property $(\mathrm{T})$ for $\mathrm{Aut}(F_5)$, together with a result of Gilman implies the following: There exists a strictly increasing sequence $(c_m)_{m\in \mathbb{N}}$ of integers at least $5$ such that the group $\prod_{m\in \mathbb{N}}\mathrm{Alt}([c_m])$ admits a finitely generated dense subgroup with property $(\mathrm{T})$. 

We may obtain $\Lambda_4$ as in the statement of Theorem~\ref{mtheorem=SpectralGap} for the case where $K_{\mathrm{Alt}}=\prod_{m\in \mathbb{N}}\mathrm{Alt}([l_m])$, where $(l_m)_{m\in \mathbb{N}}$ is some sequence of strictly increasing natural numbers at least $5$ (but numbers of generators of $\Lambda_4$ may get enormous). Indeed, to construct such a $\Lambda_4$, we appeal to the following result: Kassabov \cite{KassabovSymmetric}, together with Kassabov--Nikolov \cite{KassabovNikolov}, constructed a finitely generated dense subgroup $\Lambda_3$ of $\prod_{m\in \mathbb{N}_{\geq 5}}\mathrm{Alt}([m])$ such that it has property $(\tau)$ (with respect to the family of all finite index subgroups); see also \cite{Kassabov}. Compare with Remark~\ref{remark=Expanders}. Finally, construct $\Lambda_1$ and $\Lambda_2$ not by Lemma~\ref{lemma=SpecialLinearGroups} but by Lemma~\ref{lemma=AlternatingGroups}; obtain $\Lambda_1$ and $\Lambda_4=\langle \Lambda_2,\Lambda_3\rangle$, both dense in $K_{\mathrm{Alt}}$, as desired.
\end{remark}

\section*{Acknowledgments}
This work has been done during the  two-year stay of the author in the \'{E}cole Polytechnique F\'{e}d\'{e}rale de Lausanne supported by Grant-in-Aid for JSPS Oversea Research Fellowships, as well as his visit to University of Bristol during that stay. The author wishes to express his gratitude to Professor Nicolas Monod and Mrs. Marcia Gouffon at the EPFL and Professor John Mackay at the University of Bristol for their hospitality and help, respectively, on his stay and visit. The author thanks Yash Lodha and Michael Magee for discussion, respectively, on Hall's embedding theorem and on Goursat's lemma. He is grateful to Goulnara Arzhantseva, Yoshikata Kida and Sven Raum for several comments and suggestions, Piotr W. Nowak for the reference \cite{Nowak}, Damian Osajda for discussion on his construction in \cite{OsajdaRF}, Damian Sawicki for comments and the reference \cite{SawickiPropertyA}, and Ryokichi Tanaka for discussions on LFNF-lifts.

\bibliographystyle{plain}
\bibliography{mimura_nonexact.bib}

\begin{thebibliography}{10}

\bibitem{AbertElek}
Mikl\'os Ab\'ert and G\'abor Elek.
\newblock Dynamical properties of profinite actions.
\newblock {\em Ergodic Theory Dynam. Systems}, 32(6):1805--1835, 2012.

\bibitem{ArzhantsevaDelzant}
Goulnara Arzhantseva and Thomas Delzant.
\newblock Examples of random groups.
\newblock {\em preprint}, 2008.

\bibitem{ArzhantsevaOsajda}
Goulnara Arzhantseva and Damian Osajda.
\newblock Graphical small cancellation groups with the {H}aagerup property.
\newblock {\em preprint, arXiv:1404.6807}, 2014.

\bibitem{BaderFurmanGelanderMonod}
Uri Bader, Alex Furman, Tsachik Gelander, and Nicolas Monod.
\newblock Property ({T}) and rigidity for actions on {B}anach spaces.
\newblock {\em Acta Math.}, 198(1):57--105, 2007.

\bibitem{BartholdiErschler}
Laurent Bartholdi and Anna Erschler.
\newblock Ordering the space of finitely generated groups.
\newblock {\em Ann. Inst. Fourier (Grenoble)}, 65(5):2091--2144, 2015.

\bibitem{Baumslag}
Gilbert Baumslag.
\newblock On the residual finiteness of generalised free products of nilpotent
  groups.
\newblock {\em Trans. Amer. Math. Soc.}, 106:193--209, 1963.

\bibitem{Berlai}
Federico Berlai.
\newblock Residual properties of free products.
\newblock {\em Comm. Algebra}, 44(7):2959--2980, 2016.

\bibitem{BourgainGamburd}
J.~Bourgain and A.~Gamburd.
\newblock A spectral gap theorem in {${\rm SU}(d)$}.
\newblock {\em J. Eur. Math. Soc. (JEMS)}, 14(5):1455--1511, 2012.

\bibitem{BrieusselZheng}
J\'{e}r\'{e}mie Brieussel and Tianyi Zheng.
\newblock Speed of random walks, isoperimetry and compression of finitely
  generated groups.
\newblock {\em preprint, arXiv:1510.08040}, 2015.

\bibitem{BrieusselZhengHFD}
J\'{e}r\'{e}mie Brieussel and Tianyi Zheng.
\newblock Shalom's property ${H}_{{FD}}$ and extensions by $\mathbb{Z}$ of
  locally finite groups.
\newblock {\em preprint, arXiv:1706.00707}, 2017.

\bibitem{bookBrownOzawa}
Nathanial~P. Brown and Narutaka Ozawa.
\newblock {\em {$C^*$}-algebras and finite-dimensional approximations},
  volume~88 of {\em Graduate Studies in Mathematics}.
\newblock American Mathematical Society, Providence, RI, 2008.

\bibitem{BurgerMozes}
Marc Burger and Shahar Mozes.
\newblock Groups acting on trees: from local to global structure.
\newblock {\em Inst. Hautes \'Etudes Sci. Publ. Math.}, (92):113--150 (2001),
  2000.

\bibitem{ErshovJaikinZapirain}
Mikhail Ershov and Andrei Jaikin-Zapirain.
\newblock Property ({T}) for noncommutative universal lattices.
\newblock {\em Invent. Math.}, 179(2):303--347, 2010.

\bibitem{GamburdJakobsonSarnak}
Alex Gamburd, Dmitry Jakobson, and Peter Sarnak.
\newblock Spectra of elements in the group ring of {${\rm SU}(2)$}.
\newblock {\em J. Eur. Math. Soc. (JEMS)}, 1(1):51--85, 1999.

\bibitem{Grigorchuk}
R.~I. Grigorchuk.
\newblock Degrees of growth of finitely generated groups and the theory of
  invariant means.
\newblock {\em Izv. Akad. Nauk SSSR Ser. Mat.}, 48(5):939--985, 1984.

\bibitem{Gromovrandomwalk}
M.~Gromov.
\newblock Random walk in random groups.
\newblock {\em Geom. Funct. Anal.}, 13(1):73--146, 2003.

\bibitem{Hall}
P.~Hall.
\newblock The {F}rattini subgroups of finitely generated groups.
\newblock {\em Proc. London Math. Soc. (3)}, 11:327--352, 1961.

\bibitem{KalubaNowakOzawa}
Marek Kaluba, Piotr~W. Nowak, and Narutaka Ozawa.
\newblock $\mathrm{{A}ut}(\mathbb{F}_{5})$ has property $({T})$.
\newblock {\em preprint, arXiv:1712.07167}, 2017.

\bibitem{KassabovSymmetric}
Martin Kassabov.
\newblock Symmetric groups and expander graphs.
\newblock {\em Invent. Math.}, 170(2):327--354, 2007.

\bibitem{Kassabov}
Martin Kassabov.
\newblock Universal lattices and unbounded rank expanders.
\newblock {\em Invent. Math.}, 170(2):297--326, 2007.

\bibitem{KassabovNikolov}
Martin Kassabov and Nikolay Nikolov.
\newblock Cartesian products as profinite completions.
\newblock {\em Int. Math. Res. Not.}, pages Art. ID 72947, 17, 2006.

\bibitem{KassabovPak}
Martin Kassabov and Igor Pak.
\newblock Groups of oscillating intermediate growth.
\newblock {\em Ann. of Math. (2)}, 177(3):1113--1145, 2013.

\bibitem{KaimanovichVershik}
V.~A. Ka\u{\i}manovich and A.~M. Vershik.
\newblock Random walks on discrete groups: boundary and entropy.
\newblock {\em Ann. Probab.}, 11(3):457--490, 1983.

\bibitem{Lafforgue1}
Vincent Lafforgue.
\newblock Un renforcement de la propri\'et\'e ({T}).
\newblock {\em Duke Math. J.}, 143(3):559--602, 2008.

\bibitem{LubotzkyWeiss}
A.~Lubotzky and B.~Weiss.
\newblock Groups and expanders.
\newblock In {\em Expanding graphs ({P}rinceton, {NJ}, 1992)}, volume~10 of
  {\em DIMACS Ser. Discrete Math. Theoret. Comput. Sci.}, pages 95--109. Amer.
  Math. Soc., Providence, RI, 1993.

\bibitem{LubotzkyZuk}
Alexander Lubotzky and Andrzej \.{Z}uk.
\newblock On property $(\tau)$.
\newblock {\em monograph}, 2003.

\bibitem{BookMalcev}
A.~I. Mal{'c}ev.
\newblock {\em Algebraic systems}.
\newblock Springer-Verlag, New York-Heidelberg, 1973.
\newblock Posthumous edition, edited by D. Smirnov and M. Ta\u\i clin,
  Translated from the Russian by B. D. Seckler and A. P. Doohovskoy, Die
  Grundlehren der mathematischen Wissenschaften, Band 192.

\bibitem{MimuraUpgrading}
Masato Mimura.
\newblock Upgrading fixed points without bounded generation.
\newblock {\em forthcoming version (v3) of the preprint on arXiv:1505.06728},
  2015.

\bibitem{MimuraExpository}
Masato Mimura.
\newblock An alternative proof of {K}azhdan property for elementary groups.
\newblock In {\em Topology and Analysis of Discrete Groups and Hyperbolic
  Spaces (RIMS, 2016)}, volume 2062 of {\em RIMS K\^{o}ky\^{u}roku}, pages
  79--87, 2018.

\bibitem{MimuraLW}
Masato Mimura.
\newblock An extreme counterexample to the {L}ubotzky--{W}eiss conjecture.
\newblock {\em preprint, arXiv:1809.08918v3}, 2018.

\bibitem{MimuraSakoPartI}
Masato Mimura and Hiroki Sako.
\newblock Group approximation in {C}ayley topology and coarse geometry, part
  {I}: {C}oarse embeddings of amenable groups.
\newblock {\em preprint, arXiv:1310.4736v3. To appear in Journal of Topology
  and Analysis}, 2013.

\bibitem{MimuraSakoPartII}
Masato Mimura and Hiroki Sako.
\newblock Group approximation in {C}ayley topology and coarse geometry, part
  {II}: {F}ibred coarse embeddings.
\newblock {\em preprint, arXiv:1804.10614v2}, 2018.

\bibitem{NeumannNeumann}
B.~H. Neumann and Hanna Neumann.
\newblock Embedding theorems for groups.
\newblock {\em J. London Math. Soc.}, 34:465--479, 1959.

\bibitem{Nowak}
Piotr~W. Nowak.
\newblock Zero-in-the-spectrum conjecture on regular covers of compact
  manifolds.
\newblock {\em Comment. Math. Helv.}, 84(1):213--222, 2009.

\bibitem{NowakSawicki}
Piotr~W. Nowak and Damian Sawicki.
\newblock Warped cones and spectral gaps.
\newblock {\em Proc. Amer. Math. Soc.}, 145(2):817--823, 2017.

\bibitem{bookNowakYu}
Piotr~W. Nowak and Guoliang Yu.
\newblock {\em Large scale geometry}.
\newblock EMS Textbooks in Mathematics. European Mathematical Society (EMS),
  Z\"urich, 2012.

\bibitem{Ore}
Oystein Ore.
\newblock Some remarks on commutators.
\newblock {\em Proc. Amer. Math. Soc.}, 2:307--314, 1951.

\bibitem{Osajda}
Damian Osajda.
\newblock Small cancellation labellings of some infinite graphs and
  applications.
\newblock {\em preprint, arXiv:1406.5015}, 2014.

\bibitem{OsajdaRF}
Damian Osajda.
\newblock Residually finite non-exact groups.
\newblock {\em Geom. Funct. Anal.}, 28(2):509--517, 2018.

\bibitem{OzawaICM}
Narutaka Ozawa.
\newblock Amenable actions and applications.
\newblock In {\em International {C}ongress of {M}athematicians. {V}ol. {II}},
  pages 1563--1580. Eur. Math. Soc., Z\"urich, 2006.

\bibitem{SawickiCounterexample}
Damian Sawicki.
\newblock Warped cones violating the coarse {B}aum--{C}onnes conjecture.
\newblock {\em preprint}.

\bibitem{SawickiPropertyA}
Damian Sawicki.
\newblock Warped cones, (non-)rigidity, and piecewise properties, with a joint
  appendix with {D}awid {K}ielak.
\newblock {\em preprint, arXiv:1707.02960}, 2017.

\bibitem{ShalomHFD}
Yehuda Shalom.
\newblock Harmonic analysis, cohomology, and the large-scale geometry of
  amenable groups.
\newblock {\em Acta Math.}, 192(2):119--185, 2004.

\bibitem{Stepin}
A.~M. St{\"{e}}pin.
\newblock A remark on the approximability of groups.
\newblock {\em Vestnik Moskov. Univ. Ser. I Mat. Mekh.}, (4):85--87, 1984.

\bibitem{VershikGordon}
A.~M. Vershik and E.~I. Gordon.
\newblock Groups that are locally embeddable in the class of finite groups.
\newblock {\em Algebra i Analiz}, 9(1):71--97, 1997.

\bibitem{Wilson}
John~S. Wilson.
\newblock Embedding theorems for residually finite groups.
\newblock {\em Math. Z.}, 174(2):149--157, 1980.

\end{thebibliography}

\end{document}